\newtheorem{theorem}{Theorem}[section]
\newtheorem{lemma}[theorem]{Lemma}
\newtheorem{proposition}[theorem]{Proposition}
\newtheorem{corollary}[theorem]{Corollary}
\theoremstyle{definition}
\newtheorem{definition}[theorem]{Definition}
\newtheorem{claim}[theorem]{Claim}
\newtheorem{example}[theorem]{Example}
\newtheorem{question}[theorem]{Question}
\newtheorem{conjecture}[theorem]{Conjecture}
\newtheorem{remark}[theorem]{Remark}
\newcommand{\mA}{\mathbb A}
\newcommand{\mC}{{\mathbb C}}
\newcommand{\mE}{{\mathbb E}}
\newcommand{\mF}{\mathbb F}
\newcommand{\mL}{\mathbb L}
\newcommand{\mM}{\mathbb M}
\newcommand{\mR}{{\mathbb R}}
\newcommand{\mT}{\mathbb T}
\newcommand{\mV}{\mathbb V}
\newcommand{\mW}{\mathbb W}
\newcommand{\mX}{\mathbb X}
\newcommand{\mY}{\mathbb Y}
\newcommand{\mZ}{{\mathbb Z}}
\newcommand{\ho}{\hookrightarrow}
\newcommand{\Gg}{\gamma}
\newcommand{\GG}{\Gamma}
\newcommand{\bo}{\omega}
\newcommand{\bl}{\lambda}
\newcommand{\bL}{\Lambda}
\newcommand{\bs}{\sigma}
\newcommand{\D}{\Delta}
\newcommand{\kk}{\kappa}
\newcommand{\mcC}{\mathcal C}
\newcommand{\mcF}{\mathcal F}
\newcommand{\mcG}{\mathcal G}
\newcommand{\mcI}{\mathcal I}
\newcommand{\mcL}{\mathcal L}
\newcommand{\mcP}{\mathcal P}
\newcommand{\mcU}{\mathcal U}
\newcommand{\ti}{\tilde}
\newcommand{\emp}{\emptyset}
\begin{document}
\title{Extending weakly polynomial functions from high rank varieties}

\begin{abstract}Let $k$ be a field, $V$ a $k$-vector space and $X$ be a  subset of $V $. 
A function  $f:X\to k$ is {\em weakly polynomial} of degree $\leq a$, if  the  restriction of $f$ on any  affine subspace $L\subset X$ is a  polynomial of degree $\leq a$. In this paper we consider the case when 
$X= \mX (k)$ where $\mX$ is a complete intersection of bounded codimension  defined by a  high rank  polynomials of degrees $d, char(k)=0$ or 
$char (k)>d$ and either $k$ is algebraically closed, or $k=\mF _q,q>ad$. We show that under these assumptions any $k$-valued weakly polynomial function of degree $ \leq a$  on $X$ is a restriction of a polynomial  of degree $\leq a$  on $V$.

Our proof is based on Theorem \ref{Jan} on fibers of polynomial morphisms
$P:\mF _q^n\to \mF _q^m$ of high rank. This result is of an independent interest. For example it immediately implies a  strengthening of the  result  of \cite{Jan}.
 \end{abstract}

\author{David Kazhdan}
\address{Einstein Institute of Mathematics,
Edmond J. Safra Campus, Givaat Ram 
The Hebrew University of Jerusalem,
Jerusalem, 91904, Israel}
\email{david.kazhdan@mail.huji.ac.il}

\author{Tamar Ziegler}
\address{Einstein Institute of Mathematics,
Edmond J. Safra Campus, Givaat Ram 
The Hebrew University of Jerusalem,
Jerusalem, 91904, Israel}
\email{tamarz@math.huji.ac.il}

\thanks{The second author is supported by ERC grant ErgComNum 682150. 
}

\maketitle

\section{introduction} 
Let $k$ be a field. We denote $k$-algebraic varieties by bold letters such as $\mX$ and the sets of $k$-points of $\mX$ by  $\mX (k)$  or by $X$. We fix $d,a,c\geq 1$. We always assume that $|k|>ad$, that there exists a root of unity $\beta \in k$ of order $m>2a$,  and that either char$(k)>d$ or that it is of characteristic $0$. A field is  {\em admissible}
if it satisfying these conditions. 

\begin{definition}\label{weak-def}
Let  $V$ be a $k$-vector space  and  $X\subset V$. We say that a function $f:X \to k$ is {\it weakly polynomial} of degree $\leq a$ if the restriction $f_{|L}$ to any affine subspace  $L \subset X$ is a polynomial of degree $\leq a$.  
\end{definition}

\begin{remark} If $|k|>a$ it suffices to check this on $2$-dimensional subspaces (see \cite{KR}).
Namely a function is {\it weakly polynomial} of degree $\leq a$ if the restriction $f_{|L}$ to $2$-dimensional affine subspace  $L \subset X$ is a polynomial of degree $\leq a$.
\end{remark}

One of  goals of this paper is to construct classes of hypersurfaces $\mX \subset \mV$ such  that
any weakly polynomial function $f$ on $X$ of degree $\leq a$ is a restriction of a polynomial $F$ of degree $\leq a$ on $V$. The main difficulty is in the case when  $a\geq d$ since in this case an extension $F$ of $f$ to $V$ is not unique.

To state our result properly we introduce some definitions:

\begin{definition}Let $\mX \subset \mV$ be an algebraic $k$-subvariety.
\begin{enumerate}
\item $\mX \subset \mV$ satisfies $\star ^k_{a}$
 if any weakly polynomial function of degree $\leq a$ on $X$ is a restriction  of a polynomial function of degree $\leq   a$ on $V$.
\item A $k$-subvariety $\mX \subset \mV$ satisfies $\star _{a}$ if $\mX (l)$ satisfies $\star^l_a$ for 
 any finite extension $l/k$.
 \item For any family $\bar P=\{ P_i\}, 1\leq i\leq c$ of polynomials on $\mV$ we define the subscheme $\mX _{\bar P}\subset \mV$ by the system of equations $\{P_i(v)=0\}, 1\leq i\leq c $.
\end{enumerate}
\end{definition}

The following example demonstrates the  existence of cubic 
surfaces $\mX \subset \mA^2$  which do not have the property 
$\star^k_{1}$ for any field $k$.

\begin{example} Let  $V=k^2$, $Q=xy(x-y)$. Then $X=X_0\cup X_1\cup X_2$ where 
$X_0=\{ v\in V|x=0\}, X_1=\{ v\in V|y=0\} , X_2=\{ v\in V|x=y\} $.
The function $f:X\to k$ such that $f(x,0)=f(0,y)=0,f(x,x)=x$ is weakly linear but one can not extend it to a linear function on $V$.
\end{example}

Our goal is to prove that high rank hypersurfaces have the property $\star _a^k$. 

\begin{definition}[Algebraic rank\footnote{Also known as the Schmidt $h$-invariant.}]\label{def-rank} 
Let $P:V \to k$ be a  polynomial.  We define the $k$-rank $r_k(P)$ as the minimal number $r$ such that  $P=\sum_{i=1}^r Q_iR_i$, with $Q_i, R_i$ of $k$-polynomials of degrees strictly smaller than the degree of $P$. When $k$ is fixed we write $r(P)$ instead of $r_k(P)$.
\end{definition}

\begin{question}  Does there exists a function $R(d,r)\to \infty$ for $r\to \infty$ such that $r_{\bar k}(P)\geq R(d,r) $ for any 
polynomial $P$ of degree $d$ and $k$-rank $r$, where $\bar k$ is the algebraic closure of $k$?
\end{question}

Our main result is that high rank hypersurfaces over admissible fields satisfy   $\star^k_a$.

\begin{theorem}\label{main} 
There exists  $r=r(a,d)$ such that for any admissible  field $k$, any $k$-vector space $\mV$, and any  hypersurface $\mX \subset \mV$ of degree $d$ and rank $\geq r$ satisfies  $\star^k_a$.
\end{theorem}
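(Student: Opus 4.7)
The plan is to first establish the result over finite fields $\mF_q$ (where Fourier-analytic methods apply) and then transfer to general admissible $k$ via a standard specialization argument. Throughout, let $P$ denote the degree $d$ polynomial defining $\mX$, with $r_k(P)\geq r$ where $r=r(a,d)$ will be chosen sufficiently large.

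The statement is equivalent to surjectivity of the restriction map $\rho_a : k[V]_{\leq a}\to \mathrm{WP}_a(X)$ onto the space of weakly polynomial functions of degree $\leq a$ on $X$. Over $k=\mF_q$, I would proceed by induction on $a$. The base case $a=0$ should follow from the fact that a hypersurface of sufficiently high rank is geometrically irreducible, so a weakly constant function on $X$ is constant and extends as a constant on $V$. For the inductive step, the strategy is to construct $F\in k[V]_{\leq a}$ extending $f$ by choosing a generic affine subspace $L\subset V$ on which $L\cap X$ is still a high rank hypersurface inside $L$, applying the inductive hypothesis to $f|_{L\cap X}$ (giving a polynomial on $L$), and then extending this polynomial from $L$ to $V$ by a canonical interpolation procedure based on the values of $f$ along lines in $X$ transverse to $L$.

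The crucial technical input is Theorem \ref{Jan}, which provides equidistribution of fibers of high-rank polynomial maps $\mF_q^n\to\mF_q^m$. I would use this in two ways. First, it ensures that for generic affine $L\subset V$ the restricted defining polynomial $P|_L$ retains rank large in $a$ and $d$, so that the induction can run, and it quantifies how much rank can be lost under such restrictions. Second, it is used to verify that the candidate $F$ actually agrees with $f$ on all of $X$: the desired identity $F|_X=f$ is encoded by vanishing of explicit character sums over the fibers of $P$, and these sums can be estimated to be negligibly small precisely because the fibers of a high-rank map are equidistributed. The transfer from $\mF_q$ to any other admissible field is then routine — over algebraically closed $k$ of characteristic $0$ or $>d$ the existence of an extension of prescribed degree is a first-order condition in the coefficients of $P$ and the values of $f$, and one reduces to $\mF_q$ by spreading out and specializing.

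The main obstacle I expect is controlling the propagation of rank through the induction. Each inductive step requires restricting to a generic affine subspace and verifying that the rank of the restricted defining polynomial remains at least $r(a-1,d)$; iterating $a$ times forces a super-exponential growth in the required starting rank $r(a,d)$, and making this quantitative requires the effective form of Theorem \ref{Jan}. A second delicate point is well-definedness of the interpolated extension: one must show that different choices of $L$ and transversal lines produce the same polynomial $F$, and this consistency check is again reduced to character-sum estimates on fibers of $P$, where Theorem \ref{Jan} is essential.
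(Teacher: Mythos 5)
There is a genuine gap, and it sits exactly where your sketch becomes vague. Your induction on $a$ is not well-founded as stated: if you restrict $f$ to $L\cap X$ for a generic affine subspace $L$, the restriction is still only weakly polynomial of degree $\leq a$, not $\leq a-1$, so the inductive hypothesis on $a$ does not apply to it. If what you really intend is an induction on $\dim V$, then the entire content of the theorem is concentrated in the step "extend this polynomial from $L$ to $V$ by a canonical interpolation procedure," and no such canonical procedure is described or obviously exists --- the whole difficulty of the problem (emphasized in the paper for $a\geq d$) is that extensions are non-unique and that a weakly polynomial function need not a priori come from any polynomial. Equidistribution of fibers of $P$ does not by itself produce the extension; it can only verify identities once you have a candidate $F$, and you have no mechanism for producing one.

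The paper's proof supplies the missing engine in a way your proposal does not anticipate. It first proves $\star_a$ for one \emph{explicit} family of hypersurfaces $\mX_n=\{\sum_{i=1}^n\prod_{j=1}^d x_i^j=0\}$ (Theorem \ref{const}), exploiting the large symmetry group $(S_d)^n$, the torus $T$ acting on $\mX_n$, and the distinguished linear subspace $\kk(L)\subset X_n$: weakly polynomial functions are decomposed into $T$-eigenspaces, and each eigencomponent is shown to be determined by its restriction to $L$, where it is an honest polynomial. Theorem \ref{Jan} is then used not for generic rank preservation but as a \emph{surjectivity} statement: for $P$ of high enough rank there is an affine $\phi:\mW\to\mV$ with $\phi^\star(P)=P_n$, i.e.\ the model $\mX_n$ embeds into the arbitrary high-rank $\mX$. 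Finally, Corollary \ref{extension1} (resting on Proposition \ref{ext}, Claim \ref{planes}, and the testing theorem from \cite{kz-uniform}) propagates the extension from $W$ to $V$ one codimension at a time, which is the rigorous replacement for your "interpolation along transverse lines." Without the explicit model and the surjectivity reading of Theorem \ref{Jan}, your outline has no source for the polynomial $F$ and cannot be completed as written.
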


\begin{conjecture}  There exists  $r=r(a,d)$ such that for any admissible  field $k$, any $k$-vector space $\mV$,  any hypersurface $\mX \subset \mV$ of degree $d$, and rank $\ge r$, the associated  hypersurface $\ti \mX \subset \mV^d$  satisfies  $\ti \star^k_a$.
\end{conjecture}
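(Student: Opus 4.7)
The plan is to bootstrap Theorem \ref{main} via a polarization argument. If $\mX = \{P = 0\}$ is a hypersurface of degree $d$ and rank $\geq r$, then I interpret $\ti\mX$ as the hypersurface in $\mV^d$ cut out by the symmetric multilinear form $\ti P(v_1, \ldots, v_d)$ obtained by full polarization of $P$, and $\ti\star^k_a$ as the corresponding extension property for weakly polynomial functions on $\ti X = \ti\mX(k)$. The reduction has two main ingredients: a rank transfer from $P$ to $\ti P$, and an induction that peels off one factor of $\mV$ at a time.

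The first step is rank transfer. Since admissibility gives $d! \neq 0$ in $k$, polarization recovers $P$ from $\ti P$ via $P(v) = \frac{1}{d!}\, \ti P(v, \ldots, v)$, so any decomposition $\ti P = \sum_{i=1}^{s} Q_i R_i$ with $\deg Q_i, \deg R_i < d$ specializes to such a decomposition of $P$. This gives $r(P) \leq r(\ti P)$, and a similar specialization argument propagates high rank to every partial multilinear form $\ti P(v_1, \ldots, v_{j-1}, \,\cdot\,, v_{j+1}, \ldots, v_d)$.

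The second step is the extension itself. Viewed as a polynomial in $v_d$ alone, $\ti P$ is linear with coefficient a polynomial on $\mV^{d-1}$, so for generic $(v_1, \ldots, v_{d-1})$ the slice $\{w \in \mV : \ti P(v_1, \ldots, v_{d-1}, w) = 0\}$ is a genuine affine hyperplane, on which a weakly polynomial $\ti f$ of degree $\leq a$ restricts to a polynomial of degree $\leq a$ in $w$. I would track how the $a+1$ coefficients of this polynomial depend on $(v_1, \ldots, v_{d-1})$, apply Theorem \ref{Jan} on fibers of high-rank polynomial maps to see that each coefficient is itself weakly polynomial on the corresponding parameter variety, and then use Theorem \ref{main} (in its complete-intersection form) inductively in $d$ to extend these coefficients to polynomials on $\mV^{d-1}$. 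Reassembling yields a candidate extension $\ti F$ on $\mV^d$, and its degree is controlled because each coefficient extension has degree $\leq a$.

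The principal obstacle lies at the non-generic locus, where partial specializations of $\ti P$ degenerate and the slice ceases to be a single affine hyperplane. Showing that the generic extension $\ti F$ agrees with $\ti f$ everywhere on $\ti X$, and not merely on a Zariski-dense subset, requires a uniformity statement: that the high rank of $P$ propagates to high rank of $\ti P$ restricted to every stratum of the degeneracy locus, with bounds uniform in the parameters. This is plausible in view of the Schmidt--Schmidt-type rank inequalities that underlie the proof of Theorem \ref{Jan}, but it does not follow formally from the specialization argument in Step~1, and I expect that capturing it is exactly the technical input needed to upgrade this statement from a conjecture to a theorem.
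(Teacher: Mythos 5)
You are attempting to prove a statement that the paper itself leaves open: this is stated as a conjecture, with no proof anywhere in the text (and indeed the paper never even spells out the construction of $\ti\mX$ or the meaning of $\ti\star^k_a$ beyond the analogy with $\star^k_a$). So there is no argument of the authors to compare against, and your proposal, as written, does not close the gap either — you concede as much in your last paragraph, and that concession is accurate. The rank-transfer step is fine (restriction to the diagonal together with $d!\neq 0$ gives $r(P)\le r(\ti P)$; this is essentially the remark after Proposition \ref{equi-multi} in Appendix A), but it is the easy part, and everything after it has genuine holes.

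Concretely: (i) a weakly polynomial function on $\ti X$ is only constrained on affine subspaces \emph{contained in} $\ti X$, so while the restriction of $\ti f$ to each generic slice $\{(v_1,\dots,v_{d-1})\}\times H_{v_1,\dots,v_{d-1}}$ is a polynomial of degree $\le a$, there is no a priori reason its coefficients, viewed as functions of $(v_1,\dots,v_{d-1})$, are weakly polynomial — affine motions in the parameter directions move the slice nonlinearly, and the coefficients are not even canonically defined, since they depend on a trivialization of the family of hyperplanes over the parameter space; (ii) Theorem \ref{Jan} is a surjectivity statement about $\kk_P:Hom_{af}(W,V)\to\mcP_d(\mW)$ (every low-dimensional polynomial is a pullback of a high-rank $P$), and it does not deliver the "each coefficient is weakly polynomial on the parameter variety" claim you assign to it; (iii) the degree bookkeeping fails as stated: to get $\deg\ti F\le a$ you need the coefficient of the degree-$j$ part in $v_d$ extended to degree $\le a-j$, not $\le a$; and (iv) the degeneracy locus problem you identify is real — matching a generically constructed $\ti F$ with $\ti f$ on the strata where the partial specializations of $\ti P$ degenerate needs a uniform rank statement for the restricted forms that does not follow from the diagonal specialization, and no mechanism in the paper (Claim \ref{planes}, Proposition \ref{ext}, or Corollary \ref{extension1}) is set up for the multilinear variety $\ti\mX$. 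So what you have is a plausible strategy sketch for an open problem, not a proof.
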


The first step in our proof of Theorem \ref{main} is to construct  an  explicit collection of hypersurfaces  $\mX_n$ of  rank $\sim n$
satisfying  $\star^k_a$ for admissible fields $k$.

Let $\mW=\mA ^d$, let $\mV_n=\mW ^n$, and let
$P_n:\mV_n \to \mA$ be given by $P_n(w_1,\dots ,w_n)= \sum _{i=1}^n \mu(w_i)$, where  $\mu:\mW \to \mA$  is given  by $\mu(x^1, \dots ,x^d):= \prod _{j=1}^dx^j$.

\begin{theorem}\label{const} 
\begin{enumerate}\item $r(P_n)\geq n/d$.
\item 
For any admissible field $k$,  the hypersurface  $\mX _n\subset \mV _n$ has the property  $\star _a$.
\end{enumerate}
\end{theorem}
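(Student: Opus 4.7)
The plan is the standard singular-locus lower bound on rank. Writing $P_n = \sum_{i=1}^r Q_iR_i$ with $\deg Q_i, \deg R_i < d$, the closed subscheme $V(Q_1, R_1, \ldots, Q_r, R_r) \subset \mV_n$, of codimension at most $2r$, is contained in the singular locus of $\mX_n$. Since $\partial P_n/\partial x_i^j = \prod_{l\neq j} x_i^l$, this singular locus is exactly the set of $w=(w_1,\ldots,w_n)$ such that each $w_i$ has at least two vanishing coordinates, which has codimension $2n$ in $\mV_n$. Hence $r \geq n$, which in particular yields the claimed bound $r \geq n/d$.

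\textbf{Part (2).} I would proceed by induction on $n$. For the base case $n=1$, $\mX_1 = \{\mu=0\} = \bigcup_{j=1}^d \{x^j=0\}$ is a union of coordinate hyperplanes. Under admissibility ($|k| > ad \geq d$), the polynomial $\prod_j x^j$ cannot vanish identically on a positive-dimensional affine subspace of $\mW$ without one factor doing so, so every affine subspace of $\mX_1$ lies in a single coordinate hyperplane. Hence a weakly polynomial function $f$ of degree $\leq a$ on $\mX_1$ is the same datum as a compatible family of polynomial functions $f_S$ of degree $\leq a$ on the coordinate subspaces $\bigcap_{j\in S}\{x^j=0\}$ ($\emptyset\neq S\subset\{1,\ldots,d\}$), and the inclusion-exclusion formula
\[
F(x)=\sum_{\emptyset\neq S\subset\{1,\ldots,d\}}(-1)^{|S|+1}f_S(x),
\]
each $f_S$ viewed as a polynomial in the remaining coordinates, gives a polynomial extension of total degree $\leq a$ to $\mV_1 = \mW$.

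For the inductive step, consider the codimension-$1$ slices $H_{i,j} := \{x_i^j=0\}\cap\mX_n$. Since $x_i^j=0$ forces $\mu(w_i)=0$, each $H_{i,j}$ has the product form $\mX_{n-1}\times\mA^{d-1}$; combining the inductive hypothesis with the elementary fact that $\star_a$ passes to products with an affine space yields polynomial extensions $F_{i,j}$ of degree $\leq a$ on the ambient hyperplane $\{x_i^j=0\}\subset\mV_n$. A refined inclusion-exclusion, now over those subsets of the $nd$ coordinates whose joint vanishing cuts out a product slice of $\mX_n$, then assembles a candidate polynomial $F$ on $\mV_n$ of total degree $\leq a$.

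The principal obstacle is the ``interior'' of $\mX_n$, i.e., points where every $x_i^j$ is nonzero (such points exist whenever $n\geq 2$, as one can choose nonzero $c_i=\mu(w_i)$ with $\sum_i c_i=0$). One must verify that the candidate $F$ agrees with $f$ there. This should follow from weak polynomiality applied to affine $2$-planes in $\mX_n$ connecting interior points to the slices $H_{i,j}$: such $2$-planes are plentiful because, fixing all coordinates outside three slots $x_{i_1}^{j_1},x_{i_2}^{j_2},x_{i_3}^{j_3}$ with $i_1,i_2,i_3$ distinct, the constraint $P_n = 0$ becomes a single linear equation in these three unknowns and cuts out a $2$-plane. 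Carefully tracking how degrees propagate along these $2$-planes, and thereby verifying that the assembled $F$ does not exceed total degree $a$, is the most delicate part of the argument.
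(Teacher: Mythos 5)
Your part (1) follows the paper's strategy (bounding the rank from below by the codimension of the singular locus, which for $P_n$ is exactly $2n$ since $\partial P_n/\partial x_i^j=\prod_{l\neq j}x_i^l$), but the key step is not justified as written: for non-homogeneous $Q_i,R_i$ of degree $<d$ the locus $V(Q_1,R_1,\dots,Q_r,R_r)$ may well be empty (constant terms can make even two of these polynomials have no common zero), and then neither the Krull bound ``codimension $\leq 2r$'' nor the containment in $\mX_{sing}$ gives any information. The paper repairs this by first splitting each $Q_i,R_i$ into homogeneous components, rewriting $P_n$ as a sum of at most $r(d-1)$ products of homogeneous polynomials of positive degree; the common zero locus then contains the origin, is nonempty, and has codimension at most roughly $2r(d-1)$. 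This is exactly why the theorem claims only $r(P_n)\geq n/d$; your intermediate assertion $r\geq n$ is not established.

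Part (2) takes a genuinely different route from the paper, and it has two real gaps. The paper does not induct on $n$: it exploits the $(S_d)^n$-symmetry and the finite torus $T$ of $d$-tuples in $\Delta$ with product $1$, decomposes weakly polynomial functions into $T$-eigenspaces, shows only ``admissible'' characters survive, writes down for each such character an explicit extension of degree $\leq ad$ agreeing with $f$ on the distinguished linear subspace $L=\kappa(\{\sum c_i=0\})$, propagates the vanishing of the difference from $L$ through $X^0$, $Y$, $Y^1_2$, \dots to all of $X$, and only at the end reduces the degree from $ad$ to $a$ using homogeneity of $X$. In your induction, first, the ``refined inclusion-exclusion'' needs the chosen extensions $F_S$ to agree on overlaps such as $\{x_i^j=0\}\cap\{x_{i'}^{j'}=0\}\cap\mX_n$; unlike the base case $n=1$, where the strata are affine subspaces and the restrictions of $f$ are canonically polynomials, here each $F_{i,j}$ involves genuine choices (the extension from $\mX_{n-1}\times\mA^{d-1}$ to the ambient hyperplane is far from unique), and compatibility is not automatic. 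Second, and decisively, the interior verification fails as described: on the $2$-plane $M$ obtained by freeing $x_{i_1}^{j_1},x_{i_2}^{j_2},x_{i_3}^{j_3}$ (all other coordinates fixed and nonzero), the polynomial $(f-F)|_M$ of degree $\leq a$ is known to vanish only on the three lines $M\cap\{x_{i_t}^{j_t}=0\}$, and for $a\geq 3$ a nonzero polynomial of degree $a$ on a plane can vanish on three lines. Since the entire difficulty of the theorem is concentrated in the regime $a\geq d$ at exactly these interior points, the step you defer as ``the most delicate part'' is in fact the theorem.
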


The second step in our proof of Theorem \ref{main} is of an independent interest. To formulate the statement we introduce some definitions.
\begin{definition}\label{proj}
Let $\mV$ be $k$-vector space.
\begin{enumerate}
\item We denote by $\mcP _d(\mV)$ the $k$-variety of polynomials of degree $\leq d$ on $V$.
\item For any $k$-vector space $\mW$ we denote by $Hom_{af}(\mW, \mV)$ the $k-$variety of affine maps $\phi :\mW \to \mV$.
\item For any $P\in \mcP _d(V) $ we denote 
by $\kk _P: Hom_{af}(W,V) \to \mcP _d(\mW) $ the map given by $\phi \to \phi ^\star (P) \ (:=P\circ \phi)$.
\item A field $k$ has the property $c_d$ 
if for any $m \geq 2$, there exists $\rho =\rho (m,d)$ such that for any $k$-vector spaces $\mV,\mW, \ \dim(\mW) \leq m$, a polynomial   $P:\mV \to \mA$ of degree $d$ and rank $\geq \rho $ the map $\kk _P(k) : Hom_{af}(W,V) \to \mcP _d(\mW) $ is onto.
\item A field $k$ has the property $c_\infty$ if it has the property $c_d$ for all $d$.
\end{enumerate}
\end{definition}

\begin{theorem}\label{Jan}
  \begin{enumerate}
\item  Finite fields of characteristic $>d$ have the property $c_\infty$.
\item For any algebraically closed fields of either characteristic $>d$ or characteristic $0$ and  $Q\in  \mcP _d(W)(k) $ the fiber $\kk ^{-1}_P(Q) $ is a variety of dimension $dim (Hom_{af}(W,V))-dim(\mcP _d(\mW))$.
\end{enumerate}
\end{theorem}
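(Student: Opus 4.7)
My plan for part (1) is a Fourier-analytic counting argument over $\mF_q$. Write $m=\dim\mW$, $D=\dim\mcP_d(\mW)=\binom{m+d}{d}$, and let $N(Q)$ denote the number of affine maps $\phi:\mW\to\mV$ with $\kk_P(\phi)=Q$. Fixing a non-trivial additive character $\psi$ of $\mF_q$, I expand
\[
N(Q)=\frac{1}{q^D}\sum_{L\in\mcP_d(\mW)^*}\psi(-L(Q))\sum_{\phi}\psi\bigl(L(\kk_P(\phi))\bigr),
\]
where $L$ runs over linear functionals on $\mcP_d(\mW)(\mF_q)$. The $L=0$ term contributes the main term $q^{(m+1)\dim\mV-D}$, so it suffices to show the contribution of each nonzero $L$ is much smaller.

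The heart of the argument is a rank-inheritance lemma: for every nonzero $L$, the polynomial $R_L(v_0,\ldots,v_m):=L\bigl(P(v_0+w_1v_1+\cdots+w_mv_m)\bigr)$ on $\mV^{m+1}$ has rank at least $r(P)-C(m,d)$. I would prove this by expanding $P(v_0+\sum w_iv_i)$ via Taylor's formula (available since $\mathrm{char}\,k>d$), so that the coefficient of $w^\alpha$ is a polarization of a derivative of $P$ evaluated on the $v_i$'s. A careful specialization of $(v_0,\ldots,v_m)$ then isolates a nonzero multiple of $P$ or of one of its derivatives, so that any bounded-rank decomposition of $R_L$ pulls back to one of $P$. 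Given this lemma, the Weyl / Schmidt / Birch estimate---that polynomials on $\mF_q^N$ of degree $d$ and rank $\geq r$ have exponential sum bounded by $q^{N-\delta(r,d)}$ with $\delta(r,d)\to\infty$---controls each nonzero term. Summing over the $q^D-1$ nonzero $L$ yields total error $\leq q^{(m+1)\dim\mV-\delta(r-C,d)}$, beaten by the main term once $\delta(r-C,d)>D$; this fixes $\rho(m,d)$ and gives $N(Q)\geq1$.

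For part (2), I would deduce the fiber dimension statement from (1) by spreading out. Model $P$ over a finitely generated $\mZ$-subalgebra $A\subset k$, then reduce modulo maximal ideals of residue characteristic $>d$: rank is preserved under reduction at almost all primes, so (1) applies and gives surjectivity of $\kk_P$ over the large finite residue fields. Lang--Weil point counts then force the generic fiber to have the expected dimension $(m+1)\dim\mV-D$. Combined with upper semicontinuity of fiber dimension and the fact that $\kk_P$ lands in an affine space of dimension $D$, every non-empty fiber has dimension exactly $(m+1)\dim\mV-D$, and surjectivity passes to $\bar k$-points by the Lefschetz principle.

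The main obstacle is the rank-inheritance lemma: showing that no non-trivial linear combination of polarizations and derivatives of $P$ can drastically reduce rank requires a careful combinatorial specialization argument that is uniform over all nonzero $L\in\mcP_d(\mW)^*$.
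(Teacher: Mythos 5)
Your proposal follows essentially the same route as the paper: expand the point count of the fiber over additive characters, reduce to showing that every nonzero linear functional $L$ composed with $\kk_P$ is a high-rank polynomial in the affine map $\phi$ (the paper proves exactly this for the coefficient functions $c_\bl(w)$ of $P(w(x))$, by restricting to a diagonal subspace $w_{l_1}=\cdots=w_{l_d}$ to recover a nonzero multiple of $P$ plus lower-order terms), apply the bias--rank estimate, and then transfer part (2) to algebraically closed fields by spreading out and Lang--Weil (the paper's Appendix C does this via completeness of $ACF_p$ and ultraproducts). The argument is correct and matches the paper's in all essential respects.
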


\begin{remark}
 Theorem \ref{Jan} is first proven for finite fields. We show in Appendix $C$ how to derive the validity Theorem \ref{Jan} for algebraically closed fields  from the corresponding statement  for finite fields.
\end{remark}

The results extend without difficulty to complete intersections $\mX \subset \mV$ of bounded degree and codimension, and high rank (see Definition \ref{def-rank-variety})
\begin{theorem}\label{main1} 
For any  $c>0$, there exists  $r=r(a,d,c)$  such that for any admissible  field $k$, any $k$-vector space $\mV$, and any  subvariety  $\mX \subset \mV$ codimention $c$, degree $d$ and rank $\geq r$ the subset $\mX \subset \mV$ satisfies  $\star ^k_a$. 
\end{theorem}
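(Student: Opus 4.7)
The plan is to prove Theorem \ref{main1} by induction on the codimension $c$, with Theorem \ref{main} supplying the base case $c=1$. Let $\mX = \mX_{\bar P}$ be cut out by $\bar P = (P_1,\dots,P_c)$ of degree $d$ with rank $\geq r(a,d,c)$ in the sense of Definition \ref{def-rank-variety}. Consider the intermediate complete intersection $\mX' = \mX_{\bar P'} \supset \mX$ defined by $\bar P' = (P_1,\dots,P_{c-1})$, which has codimension $c-1$. Since passing to a subfamily cannot drop the rank by more than a controlled amount, choosing $r(a,d,c)$ large enough forces $\bar P'$ to have rank at least $r(a,d,c-1)$, so the inductive hypothesis applies to $\mX'$.

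The main task is then to extend any weakly polynomial function $f\colon X\to k$ of degree $\leq a$ to a weakly polynomial function $\tilde f\colon X'\to k$ of degree $\leq a$; once this is achieved, the inductive hypothesis produces a polynomial extension of $\tilde f$ to $V$, which is also an extension of $f$. Inside $\mX'$, the subvariety $\mX$ is the zero set of the single polynomial $P_c|_{\mX'}$. Using Theorem \ref{Jan} (the surjectivity of $\kk_{\bar P'}$), I would show that through any point $v\in X'\setminus X$ one can find many affine subspaces $L$ of controlled dimension with $v\in L\subset \mX'$ such that $L\cap X$ is a hypersurface in $L$ of rank $\geq r(a,d)$. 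Theorem \ref{main} then uniquely extends $f|_{L\cap X}$ to a polynomial of degree $\leq a$ on $L$, and I define $\tilde f(v)$ to be the value of this extension at $v$.

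To see that $\tilde f$ is well-defined, given two such admissible subspaces $L,L'$ through $v$ I would apply Theorem \ref{Jan} once more to produce an auxiliary affine subspace $L''\subset \mX'$ containing $v$ that meets $L$ and $L'$ in affine subspaces of positive dimension; since Theorem \ref{main} gives unique polynomial extensions on each of $L,L',L''$, agreement on the overlaps forces agreement at $v$. To check that $\tilde f$ is weakly polynomial of degree $\leq a$, it suffices by the remark after Definition \ref{weak-def} to consider a generic affine $2$-plane $\pi\subset X'$; embed $\pi$ inside a larger affine subspace $L\subset \mX'$ on which $L\cap X$ is a high-rank hypersurface, and note that $\tilde f|_L$ coincides by construction with the polynomial extension of $f|_{L\cap X}$, so $\tilde f|_\pi$ is polynomial of degree $\leq a$.

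The principal obstacle is the bookkeeping of ranks under restriction: one needs that a generic affine subspace $L\subset \mX'$ of the required dimension, passing through a prescribed point $v\in X'$, can be chosen so that the restriction $P_c|_L$ still has rank $\geq r(a,d)$, and similarly that the family $\bar P'$ remains of high rank when viewed on such subspaces. These statements should follow from the generic-restriction estimates that underlie the hypersurface case of Theorem \ref{main}, combined with Theorem \ref{Jan}; the inductive step then reduces to two bookkeeping-free applications of Theorem \ref{main}, one to build $\tilde f$ and one absorbed into the induction hypothesis at codimension $c-1$.
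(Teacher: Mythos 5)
Your strategy---induction on the codimension $c$, peeling off one defining equation at a time and extending $f$ from $X$ to the intermediate complete intersection $X'$---is genuinely different from the paper's. The paper does not induct on $c$: it re-runs the entire three-step hypersurface argument for collections of polynomials, namely (i) the model complete intersection $\mX_{n,\bar d}=\prod_l\mX_{n,d_l}$ is shown to satisfy $\star_a$ by the same torus-action and symmetric-group argument as in Theorem \ref{const} (this is Theorem \ref{co}); (ii) Proposition \ref{Jan2} shows that a high-rank collection $\bar P$ restricts to any prescribed collection $\bar Q$ on some affine subspace, so the model embeds into $\mX$; and (iii) the analogues of Claim \ref{planes} and Proposition \ref{ext} propagate the extension from that subspace to all of $V$ along a flag.

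The decisive gap in your argument is the claim that Theorem \ref{main} ``uniquely extends $f|_{L\cap X}$ to a polynomial of degree $\leq a$ on $L$.'' Theorem \ref{main} asserts existence only, and uniqueness genuinely fails whenever $a\geq d$: if $F$ extends $f|_{L\cap X}$, then so does $F+Q\cdot(P_c|_L)$ for every polynomial $Q$ of degree $\leq a-d$, and these extensions differ at points of $L\setminus X$. The case $a\geq d$ is precisely the hard case of the paper (the introduction flags the non-uniqueness of extensions as ``the main difficulty''), so your definition of $\tilde f(v)$ is not well defined there, and the coherence argument via the auxiliary subspace $L''$ cannot repair this: two extensions agreeing on $L\cap L''$ need not agree at $v$. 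A secondary problem is that you verify weak polynomiality of $\tilde f$ only on a \emph{generic} affine $2$-plane $\pi\subset X'$, whereas the definition requires it on every affine plane; the testing result (Theorem \ref{testing-lines}) upgrades ``almost every'' to ``every'' only for functions already known to be weakly polynomial of degree $a$, which is the very thing you are trying to establish. For $a<d$, where degree-$\leq a$ extensions off a high-rank hypersurface are indeed unique, your scheme is plausible, but that case is already covered by \cite{kz-uniform}.
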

\begin{conjecture}
\begin{enumerate}
\item For any $m,d \geq 0$, there exists $\rho =\rho (m,d)$ such that if the rank of a polynomial $P$ is $\geq  \rho (m,d)$, then the map $\kk _P$ is flat.
\item Non-archimedian local fields have the property $c_\infty$.
\item The bound on $r$ depends polynomially on $c$. This will follow from Conjecture \ref{conj-bias}
which is currently known for $d=2,3$ \cite{s-h}. 
\end{enumerate}
\end{conjecture}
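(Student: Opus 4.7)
For part (1), the strategy is Hironaka's miracle flatness criterion: a morphism from a Cohen--Macaulay scheme to a regular scheme is flat if and only if all its fibers are equidimensional of the expected dimension. Both $Hom_{af}(\mW,\mV)$ and $\mcP_d(\mW)$ are affine spaces, hence regular and Cohen--Macaulay, so it suffices to show that every geometric fiber of $\kk_P$ has dimension $\dim(Hom_{af}(\mW,\mV)) - \dim(\mcP_d(\mW))$ once $r(P) \geq \rho(m,d)$. Theorem \ref{Jan}(2) provides this bound for one fiber at a time over an algebraically closed field; the task is to make it uniform in $Q$. The key observation is that for high-rank $P$, the polynomial $\phi \mapsto \phi^*(P) - Q$ on the parameter space $Hom_{af}(\mW,\mV)$ retains the leading-order rank structure of $\phi \mapsto \phi^*(P)$, because $Q$ contributes only to terms in the $\mW$-coordinates and does not disturb the rank estimates driving the fiber count. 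Combining a uniform version of that count with upper semicontinuity of fiber dimension yields constancy, hence flatness. The main obstacle is this uniformity step: producing a rank lower bound for $\phi^*(P)-Q$ that is independent of $Q$.

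For part (2), let $k$ be a non-archimedean local field with ring of integers $\mcO$, uniformizer $\pi$, and residue field $\mF_q$. Given $P \in \mcP_d(\mV)(k)$ of rank $\geq \rho(m,d)$ and $Q \in \mcP_d(\mW)(k)$, the plan is a reduction-and-lift argument. Rescale by powers of $\pi$ so that $P,Q$ are integral and the reduction $\bar P \in \mcP_d(\mV)(\mF_q)$ still has rank $\geq \rho$; this requires an auxiliary lemma saying that the rank over $k$ controls the rank over $\mF_q$ after a well-chosen integral model. Passing if necessary to an unramified extension to make $q$ large enough for Theorem \ref{Jan}(1), one obtains $\bar\phi$ with $\bar\phi^*(\bar P) = \bar Q$. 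To lift $\bar\phi$ to $\mcO$ I invoke part (1): flatness together with the expected-dimension property of fibers, combined with a Jacobian computation driven by high rank, yield generic smoothness of the fiber; Lang--Weil then produces many $\mF_q$-points, among which a smooth one exists, and Hensel lifts it to a $k$-point of $\kk_P^{-1}(Q)$. The main obstacle is quantifying smoothness on a dense open subset of the fiber in terms of $r(P)$, and ensuring that the Hensel lift stays compatible with the fixed target $Q$ rather than only with $\bar Q$.

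For part (3), the plan is to audit the proof of Theorem \ref{main1} and track how the rank bound depends on the codimension $c$. The induction reduces codimension $c$ to $c-1$ at a rank cost governed by a bias-to-rank conversion. If Conjecture \ref{conj-bias} holds with polynomial bounds, meaning that rank is bounded by a polynomial in the inverse bias, then each reduction step raises the required rank by a polynomial factor, and $c$ compositions produce a polynomial-in-$c$ bound. Combined with the ranks appearing in Theorem \ref{Jan} and Theorem \ref{const}, which are already polynomial in their inputs, this yields $r(a,d,c)$ polynomial in $c$ with coefficients depending on $a,d$. The main obstacle is Conjecture \ref{conj-bias} itself, unproven for $d \geq 4$; this proposal is therefore conditional, and verifies only that polynomial dependence on $c$ propagates through the induction once the underlying bias-rank conversion is polynomial.
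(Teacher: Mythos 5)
This statement is one of the paper's stated conjectures: the paper offers no proof of any of its three parts, so there is nothing of the authors' to compare your argument against, and your text is in any case a research plan rather than a proof. For part (1), the miracle-flatness route (both source and target are affine spaces, so flatness reduces to equidimensionality of fibers) is natural, but the obstacle you single out --- uniformity in $Q$ --- is not where the difficulty lies: Theorem \ref{Jan}(2) is already stated for every $Q$, and the underlying point count (Lemma \ref{onto}) is uniform in the target vector $b_{\bl}$. The genuine gaps are elsewhere: (a) the conjecture assumes only that $P$ has high rank over the given field, while flatness must be checked after base change to $\bar k$, and the paper's own open Question records that high $k$-rank is not known to imply high $\bar k$-rank; (b) Theorem \ref{Jan}(2) is only available in characteristic $0$ or $>d$, whereas the conjecture carries no such restriction; and (c) one must know that \emph{every} irreducible component of every fiber has exactly the expected dimension (the Chevalley lower bound plus reading \ref{Jan}(2) as an upper bound handles this, but you never address it). So your argument would at best yield a weakened form of (1) for algebraically closed fields in good characteristic.

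For part (2), the reduction-and-lift scheme breaks at the step you flag and at one you understate. There is no available lemma producing an integral model of $P$ whose reduction mod $\pi$ retains rank $\geq \rho$; rank can collapse under reduction, and controlling it is a problem of the same nature as the open Question about $k$ versus $\bar k$ rank, not an ``auxiliary lemma''. The Hensel step needs an $\mF_q$-point of the fiber at which the Jacobian of the system $\{c_{\bl}(\phi)=b_{\bl}\}$ is surjective, which is a new quantitative smoothness statement in terms of $r(P)$ that is not contained in Theorem \ref{Jan} or in Appendix A, and your appeal to part (1) is circular in spirit since (1) is itself conjectural beyond the algebraically closed case. Note also that passing to an unramified extension would produce a solution over that extension rather than over $k$ (though for residue characteristic $>d$ no extension is needed, since the bound in Theorem \ref{Jan}(1) is uniform in $q$), and residue characteristic $\leq d$ is untouched. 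For part (3) you describe the intended bookkeeping but do not perform it; as written it establishes nothing beyond what the paper already asserts, namely that polynomial dependence on $c$ would follow from Conjecture \ref{conj-bias}. In short: reasonable strategies, with the essential difficulties (rank under field extension and under reduction, smooth-point production, and the unconditional bias-rank bound) left exactly where the conjecture put them.
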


\begin{remark}
From now on the claims in the introduction and in  the first three  sections of the main body of  the paper are stated for the case that $\mX$ a hypersurface ($c=1$).  The general case ( which is completely analogous) is discussed in Section $5$.
\end{remark}

The third  step in the proof of Theorem \ref{main} consists of   the following two results. 
Let $\mX \subset \mV$ be a hypersurface of degree $d$, $l:V\to k$ a  non-constant affine function, $\mX _b:=l^{-1}(b)\cap \mX$. We denote by $\mZ$ the variety of affine $2$-dimensional subspaces $\mL \subset \mX _1$ and by $\mZ ^0\subset \mZ$ the constructible subset of planes $\mL$
 for which there exists an affine $3$-dimensional subspace $\mM \subset \mX$ containing $\mL$ and such that $\mM \cap \mX _0\neq \emp$. Let $\mY := \mZ - \mZ ^0$. We denote by $r(\mX)$  the rank of the polynomial defining the hypersurface $\mX _0\subset \mV _n^0:=l^{-1}(0) $. \\

 The first result states that the subvariety  $\mY \subset \mZ$ is {\it small} if $r(\mX)>>1$.
\begin{claim}\label{planes} For any $s>0$, there exists $r=r(d,s)$ such that if $r(\mX) >r$ then
\begin{enumerate}
\item If $k$ is finite and char$(k)>d$ then $|\mY (k)| / |\mcL (k)| \leq  |k|^{-s}$.
\item $dim (\mY)<dim (\mZ)-s$.
\end{enumerate}
\end{claim}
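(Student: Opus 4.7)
The plan is to reduce both assertions to a Cauchy--Schwarz inequality whose three moments are controlled using Theorem~\ref{Jan}.

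Fix a splitting $\mV=\mV^0\oplus ke_0$ with $l(e_0)=1$, and let $P$ be the polynomial defining $\mX$. I parametrize elements of $\mZ$ by triples $\tilde L=(v_1,u_1,u_2)\in\mV_1\times\mV^0\times\mV^0$ with $P(v_1+t_1u_1+t_2u_2)\equiv 0$ in $k[t_1,t_2]$, and $3$-dimensional affine subspaces $M\subset\mX$ with $l|_M$ non-constant by quadruples $(v_1,u_0,u_1,u_2)$ with $l(u_0)=1$ and $P(v_1+t_0u_0+t_1u_1+t_2u_2)\equiv 0$; the forgetful map onto $(v_1,u_1,u_2)$ sends good quadruples onto the parametrized preimage $\tilde\mZ^0$ of $\mZ^0$. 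Setting
\[ f(\tilde L):=\#\{\,u_0\in\mV\,:\,l(u_0)=1,\ (\tilde L,u_0)\text{ a good quadruple}\}, \]
we have $\tilde L\in\tilde\mZ^0\Leftrightarrow f(\tilde L)\geq 1$, so by Cauchy--Schwarz
\[ |\tilde\mZ^0|\ \geq\ \frac{\bigl(\sum_{\tilde L}f(\tilde L)\bigr)^{2}}{\sum_{\tilde L}f(\tilde L)^{2}}\ =\ \frac{\tilde N_3^{\,2}}{\tilde N_3^{(2)}}, \]
reducing the task to showing the three moments $\tilde N_2:=|\tilde\mZ|$, $\tilde N_3:=\sum_{\tilde L}f$, $\tilde N_3^{(2)}:=\sum_{\tilde L}f^{2}$ satisfy $\tilde N_3^{\,2}\geq(1-|k|^{-s})\,\tilde N_2\,\tilde N_3^{(2)}$.

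Each of these moments counts affine maps $\phi\in{\rm Hom}_{af}(W,\mV)$ (subject to a prescribed affine constraint on $l\circ\phi$) for which $P\circ\phi$ lies in a prescribed subspace of $\mcP_d(W)$: for $\tilde N_2$, take $W=k^2$ and $P\circ\phi=0$; for $\tilde N_3$, take $W=k^3$ and $P\circ\phi=0$; for $\tilde N_3^{(2)}$, take $W=k^4$ and require $P\circ\phi\in(t_0t_0')\cdot\mcP_{d-2}(k^4)$ (the two extensions are both good iff $\Psi:=P\circ\phi$ satisfies $\Psi|_{t_0=0}=\Psi|_{t_0'=0}=0$, i.e.\ $\Psi\in(t_0)\cap(t_0')=(t_0t_0')$). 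I then invoke an effective fiberwise equidistribution form of Theorem~\ref{Jan}, namely the character-sum estimate underlying its finite-field proof, to conclude that each count equals its expected value up to a factor $1+O(|k|^{-s_0})$ with $s_0=s_0(r(\mX))\to\infty$. The Pascal identity $\binom{d+4}{4}-\binom{d+2}{4}=2\binom{d+3}{3}-\binom{d+2}{2}$ then gives $\tilde N_3^{\,2}=(1+O(|k|^{-s_0}))\,\tilde N_2\,\tilde N_3^{(2)}$, so $|\tilde\mY|\leq O(|k|^{-s_0})\,\tilde N_2$ and (1) follows. Part (2) is obtained by rerunning the count over the algebraic closure using Theorem~\ref{Jan}(2) in place of the finite-field equidistribution (the same dimensional arithmetic applies), or alternatively by transferring (1) via Lang--Weil.

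The main obstacle is producing the quantitative equidistribution invoked above. Theorem~\ref{Jan} as stated gives only surjectivity (finite fields) and generic fiber dimension (algebraically closed fields), whereas here I need
\[ |\{\phi:P\circ\phi=Q\}|=(1+O(|k|^{-s_0}))\,|k|^{\dim{\rm Hom}_{af}-\dim\mcP_d(W)} \]
uniformly in $Q\in\mcP_d(W)$, in $W=k^2,k^3,k^4$, and in the affine constraint on $l\circ\phi$. Equivalently, one needs a Gauss-sum bound $|\sum_\phi\chi(P\circ\phi)|\ll|k|^{-s_0}|{\rm Hom}_{af}|$ for every nontrivial character $\chi$ of $\mcP_d(W)$ and constrained $\phi$. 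Establishing this uniformly, with $s_0\to\infty$ as $r(\mX)\to\infty$, is the quantitative heart of the argument and the step that consumes the rank hypothesis.
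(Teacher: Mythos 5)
Your reduction is sound and is genuinely different from the paper's argument: you bound $|\mY|$ by a single global Cauchy--Schwarz $|\tilde\mZ^0|\geq(\sum_{\tilde L}f)^2/\sum_{\tilde L}f^2$ and then compute three moments as fiber counts of $\kk_P$ over $W=k^2,k^3,k^4$; the encoding of $\sum f^2$ via $\Psi\in(t_0t_0')\mcP_{d-2}(k^4)$ is correct, and the dimension arithmetic $\binom{d+4}{4}-\binom{d+2}{4}=2\binom{d+3}{3}-\binom{d+2}{2}$ checks out. The paper (Proposition \ref{line-plane}) instead proves the stronger pointwise statement that for $q^{-s}$-almost every plane the \emph{number} of admissible extensions is close to its mean: it fixes the plane, reduces the condition ``$P$ vanishes on the spanned $3$-space'' to finitely many equations by interpolation on a grid $\{a_i\}$, writes the count of extensions as a character sum, and controls the second moment of the error over the base by the Gowers-norm iteration of Lemma \ref{complexity} together with Propositions \ref{bias-rank-1} and \ref{bias-rank-2}. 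That detour through $U_d$-norms is needed there precisely because for a fixed bad plane the relevant polynomial in $z$ need not have high rank; your formulation averages over the whole family of affine maps at once and so avoids this issue.

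However, as you yourself flag, the proof is incomplete at exactly the step that carries all the analytic content: the quantitative, two-sided, fiber-uniform equidistribution of $\phi\mapsto P\circ\phi$ over the constrained family $\{\phi: l\circ\phi=\lambda\}$, with power saving growing with $r(\mX)$. Theorem \ref{Jan} as stated (surjectivity, generic fiber dimension) does not suffice, and without the equidistribution your argument is only a reduction of Claim \ref{planes} to another unproved statement of comparable depth. The good news is that the missing estimate is essentially already in the paper: the Lemma in Section \ref{jan-finite} shows that the coefficient collection $\{c_{\bl}(w)\}$ of $P(w(x))$ has rank $\geq r(P)$ as a collection, the proof of Lemma \ref{onto} converts this via Proposition \ref{bias-rank-1} into exactly the character-sum bound $|\sum_w e_q(\sum_{\bl}\alpha_{\bl}c_{\bl}(w))|\leq q^{-s}|{\rm Hom}_{af}|$ for $\alpha\neq 0$ (which yields each fiber count up to a factor $1+O(q^{\binom{d+m}{m}-s})$), and Lemma \ref{subspace-rank} absorbs the bounded-codimension constraint $l\circ\phi=\lambda$ at the cost of lowering the rank by $\dim W+1\leq 5$. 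You should carry this out explicitly (noting also that $r(P)\geq r(P|_{l^{-1}(0)})=r(\mX)$, and that passing from parametrized triples to planes only changes the counts by uniform factors plus a negligible degenerate contribution); as written, the ``main obstacle'' paragraph is a statement of the problem, not a proof.
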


\begin{remark} We first prove the part $(1)$ using  techniques from Additive Combinatorics. The part $(2)$ is then an easy corollary.
\end{remark}

The second result demonstrates that one can extend   any weakly polynomial function vanishing on $X_0$ to   a  polynomial function on $X$:

\begin{proposition}\label{ext}  
There exists $r=r(d,a)$ such that if $r(X) >r$  and $k$ is an 
admissible field then
\begin{enumerate}
\item If $k=\mF _q,q>a$ then any weakly polynomial function on $X$ of degree $\leq a$ vanishing on $X_0$ is a restriction of a polynomial on $V$ of degree $\leq a$.
\item If $k$ is an algebraically closed field then any weakly polynomial function on $X $ 
of degree $\leq a$ vanishing on $X_0$ is a 
restriction of a polynomial on $V$ of degree
 $\leq a$.
\end{enumerate}
\end{proposition}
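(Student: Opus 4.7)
The plan is a joint induction with Theorem \ref{main} on the degree $a$. The base case $a=0$ is straightforward: a weakly polynomial function of degree $0$ is constant on each affine line in $X$, and the high-rank hypothesis ensures every point of $X$ is connected to $X_0$ by an affine line in $X$, forcing $f\equiv 0$. For $a\geq 1$, my strategy is to construct a weakly polynomial function $h$ of degree $\leq a-1$ on $X$ with $f = l\cdot h$, then invoke Theorem \ref{main} in degree $a-1$ (the inductive hypothesis, valid since $r(X) > r(a,d) \geq r(a-1,d)$) to extend $h$ to a polynomial $H$ of degree $\leq a-1$ on $V$; the product $F := l\cdot H$ will then be a polynomial of degree $\leq a$ on $V$ extending $f$, since it vanishes on $X_0$ and equals $f$ on $X\setminus X_0$.

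The construction of $h$ begins with Claim \ref{planes}. For each \emph{good} 2-plane $L\subset X_1$---one contained in a 3-space $M\subset X$ with $M\cap X_0\neq\emp$---the restriction $f|_M$ is a polynomial of degree $\leq a$ on $M$ vanishing on the affine hyperplane $\{l|_M=0\}$, so it factors as $f|_M = l|_M\cdot g_M$ with $g_M$ a polynomial of degree $\leq a-1$ on $M$; restricting to $L$ yields $f|_L = g_M|_L$ of degree $\leq a-1$. By Claim \ref{planes}, the analogous statement holds for most 2-planes in each $X_b$ with $b\neq 0$ (via a shift $l\mapsto l-b$ where needed).

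The crucial step is upgrading this from ``most'' to \emph{every} 2-plane $L\subset X_b$. Given an arbitrary $L_0\subset X_1$ and a chosen point $p\in L_0$, for each affine line $\ell\subset L_0$ through $p$ the subfamily of 2-planes $L'\subset X_1$ containing $\ell$ is of large enough dimension/cardinality that Claim \ref{planes} (with $s$ chosen sufficiently large using the high-rank hypothesis) forces it to meet the good set; so there exists a good $L'\supset\ell$ in $X_1$, whence $f|_\ell = f|_{L'}|_\ell$ has degree $\leq a-1$. Hence the degree-$a$ homogeneous part of $f|_{L_0}$ expanded around $p$ vanishes on every $k$-line in $L_0 - p$. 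Since we work over a field with $|k|>ad$ (in particular $q>a$ in the finite case), this forces the degree-$a$ part to vanish identically, so $f|_{L_0}$ has degree $\leq a-1$.

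It follows that $h := f/l$ is well-defined and weakly polynomial of degree $\leq a-1$ on $X\setminus X_0$, and it extends consistently to $X_0$ by the rule $h|_M := g_M$ on each good 3-space $M$; two such definitions agree on $M_1\cap M_2$ because both $g_{M_1}$ and $g_{M_2}$ coincide with $f/l$ on the Zariski-dense subset where $l\neq 0$, hence as polynomials on $M_1\cap M_2$. Applying Theorem \ref{main} in degree $a-1$ to $h$ yields $H$ and completes the proof. The hardest part is the ``most-to-all'' upgrade of the third paragraph: in the finite field case one needs the quantitative bound in Claim \ref{planes}(1) to guarantee a good 2-plane through every line of $X_1$, while in the algebraically closed case the dimension bound of Claim \ref{planes}(2) plays the same role via Zariski density of the good set within each incidence subfamily.
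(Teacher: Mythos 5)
Your global strategy (reduce the degree by factoring out $l$, then induct on $a$ jointly with the extension theorem) is close in spirit to the paper's proof, which likewise divides by $\prod_{s\in S}(l-s)$ on good subspaces and inducts on $a$ via Lemma \ref{l}. The genuine gap is in your ``most-to-all'' upgrade, which is indeed the crux. Claim \ref{planes} (and Proposition \ref{line-plane}) only say that the \emph{bad} planes form a fraction $\leq q^{-s}$ of \emph{all} affine planes in $X_1$, where $s$ is a constant determined by $d$ and the rank threshold. But the subfamily of planes in $X_1$ containing a fixed line $\ell$ has density roughly $q^{-2\dim V}$ inside the set of all planes in $X_1$. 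Since $\dim V$ is unbounded while $s$ must stay bounded in terms of $d$ and $a$ (the proposition requires $r=r(d,a)$ independent of $\dim V$), the bound $q^{-s}$ does not prevent the bad set from containing \emph{every} plane through a given line $\ell$, or every plane inside a given $L_0$. Taking ``$s$ sufficiently large using the high-rank hypothesis,'' as you propose, would force $s\gtrsim 2\dim V$ and hence $r\gtrsim r(d,2\dim V)$, which is circular. The same objection applies to the algebraically closed case: $\dim(\mY)<\dim(\mZ)-s$ with $s$ bounded does not exclude $\mY$ from containing the whole (much smaller-dimensional) incidence family of planes through a fixed line.

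The paper closes exactly this gap with a genuinely different tool: the local testing theorem (Proposition \ref{testing-lines}, from \cite{kz-uniform}), which asserts that a weakly polynomial function of degree $a$ on a bounded-degree complete intersection whose restriction to $q^{-A}$-almost every affine plane is a polynomial of degree $<a$ is in fact weakly polynomial of degree $<a$, i.e.\ of degree $<a$ on \emph{every} plane, with $A=A(d,a)$ independent of $\dim V$. This is a nontrivial theorem (an adaptation of low-degree testing to subvarieties), not a counting consequence of Claim \ref{planes}, and it is the missing ingredient in your argument. A secondary, more minor point: your base case $a=0$ asserts that \emph{every} point of $X$ lies on a line of $X$ meeting $X_0$, which again is only an almost-everywhere statement from the counting lemmas and would need the same kind of upgrade; the paper avoids this by running the induction through Lemma \ref{l} uniformly in $a$.
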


As an immediate corollary we obtain:
\begin{corollary} \label{extension1}  There exists $r=r(d,a)$ such that 
the following holds for all admissible fields $k$. 

Let  $\mX= \{v\in \mV|P(v)=0\}\subset \mV$ be 
a hypersurface of degree  $\leq d$ and $\mW \subset \mV$ an affine subspace 
such that rank of $P_{|\mW}\geq r$. Then  for any weakly polynomial function $f$ on $X$
of degree $\leq a$ such that $f _{|X\cap W}$  extends to a polynomial on $W$ of degree $\leq a$ there  exists an extension $F$ of $f$ to 
a polynomial on $V$  of degree $\leq a$.
\end{corollary}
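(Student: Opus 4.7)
The plan is to reduce the corollary to Proposition~\ref{ext} by peeling off codimensions one at a time, taking $r=r(d,a)$ to be exactly the constant furnished by Proposition~\ref{ext}. First I would reduce to the vanishing case: pick a polynomial $G$ of degree $\leq a$ on $W$ extending $f|_{X\cap W}$, extend $G$ to a polynomial $\tilde G$ of the same degree on $V$ (via any linear retraction $V\to W$), and replace $f$ by $f-\tilde G|_X$. This is still weakly polynomial of degree $\leq a$ on $X$ and now vanishes on $X\cap W$; if $\tilde H$ extends it to a polynomial of degree $\leq a$ on $V$, then $F:=\tilde H+\tilde G$ solves the original problem.

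Next I would fix a flag $W=W_c\subset W_{c-1}\subset\cdots\subset W_0=V$ of affine subspaces with $\mathrm{codim}_{W_{i-1}}(W_i)=1$, and construct polynomials $F_i$ of degree $\leq a$ on $W_i$ extending $f|_{X\cap W_i}$ by downward induction on $i$, starting from $F_c=0$. Given $F_i$, extend it arbitrarily to a polynomial $\tilde F_i$ of degree $\leq a$ on $W_{i-1}$, and set $h:=f|_{X\cap W_{i-1}}-\tilde F_i|_{X\cap W_{i-1}}$. Then $h$ is weakly polynomial of degree $\leq a$ on $X\cap W_{i-1}$ and vanishes on $X\cap W_i$. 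Since $W_i$ is cut out in $W_{i-1}$ by a single non-constant affine function $l_i$, the pair $(X\cap W_{i-1},\, X\cap W_i)$ is precisely the $(X,X_0)$ of Proposition~\ref{ext} applied inside the ambient space $W_{i-1}$ to the hypersurface defined by $P|_{W_{i-1}}$. Proposition~\ref{ext} then furnishes a polynomial extension $H_i$ of $h$ of degree $\leq a$ to $W_{i-1}$, and $F_{i-1}:=H_i+\tilde F_i$ continues the induction; after $c$ steps, $F_0$ extends $f$ to $V$.

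The only thing that needs verifying is that Proposition~\ref{ext} actually applies at each stage, i.e.\ that $r(P|_{W_{i-1}})\geq r$ with the same $r$ throughout. This follows from the routine observation that restriction never increases the number of terms in a Schmidt-rank decomposition: any expression $P|_{W_{i-1}}=\sum_j Q_j R_j$ with $\deg Q_j,\deg R_j<d$ restricts further to $P|_W=\sum_j (Q_j|_W)(R_j|_W)$, whence $r(P|_{W_{i-1}})\geq r(P|_W)\geq r$. I do not anticipate a serious obstacle: the entire analytic content is already in Proposition~\ref{ext}, and the induction merely propagates its conclusion up the flag, the one delicate point being that the sub-rank property of restrictions makes a single constant $r=r(d,a)$ adequate simultaneously for every intermediate hypersurface $X\cap W_{i-1}$.
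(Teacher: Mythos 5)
Your proposal is correct and follows essentially the same route as the paper: the paper's (very terse) proof likewise fixes a flag from $W$ up to $V$ with one-dimensional steps and extends $f$ inductively, the inductive step being exactly your "subtract an arbitrary extension and apply Proposition~\ref{ext} to the resulting function vanishing on the hyperplane slice." Your added verification that the rank hypothesis propagates up the flag (restriction to a subspace containing $W$ cannot have smaller rank than $P|_W$) is the right point to check and is implicit in the paper.
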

\begin{proof} Choose a flag 
$$\mcF =\{W_0=W\subset W_1\dots \subset W_{dim(V)-dim(W)}=V\}, dim (W_i)=dim(W)+i,$$ 
and extend $f$ by induction in $i,1\leq i\leq dim(V)-dim(W) $ to a polynomial $F$ on $V$.
\end{proof}

\begin{remark} The choice of $F$ depends on a choice of flag $\mcF$ and on choices involved in the inductive arguments.
\end{remark}

Theorem \ref{main} is an almost 
immediate corollary of the results stated above:
\begin{proof}[Proof of Theorem \ref{main} assuming Theorems \ref{const} and \ref{Jan},  and Corollary \ref{extension1}]

Let $\ti r$ be from Corollary \ref{extension1} and $r=r(a,d):=\rho (dim (W),d)$  from Theorem \ref{Jan}. As follows from Theorem \ref{const}  hypersurfaces $\mX _n$ are of  rank $\geq \ti r$ for $n\geq d\ti r$.

 Let $\mX \subset \mV$ be a hypersurface of rank $\geq r$.  By Theorem \ref{Jan}  there exists a linear 
map $\phi :\mW \to \mV$ such that $\mX _n=\{ w\in \mW|\phi (w)\in \mX\}$. Since $\mX _n$
satisfies $\star^k_a$,  Corollary \ref{extension1} implies that $\mX$ satisfies $\star ^k_a$. 
\end{proof}

\begin{remark}\label{leq} 
We believe that the restrictions on the characteristic is not necessary in Theorem \ref{main}. 
\end{remark}

\begin{remark} 
 The case $a <d$ was studied in \cite{kz-uniform}. The case $a=d=2$ of was studied in \cite{kz}, and a bilinear version of it was studied in \cite{GM}, where it was applied as part of a  quantitative proof for the inverse theorem for the $U_4$-norms over finite fields. We expect the results in this paper to have similar applications to a quantitative proof for the inverse theorem for the higher Gowers uniformity norms, for which at the moment only a non quantitative proof using ergodic theoretic methods exists \cite{btz, tz, tz-1}. 
\end{remark}

We finish the introduction with a different application of  Theorem \ref{Jan}. We show how to derive the following strengthening of the main Theorem from \cite{Jan}. \\

\begin{lemma} Let $k$ be an algebraically closed field, $\mcC$ the category of finite-dimensional affine $k$-vector spaces with morphisms being affine maps, let   $\mcF _d$ be the contravariant endofunctor  on $\mcC$ given by  
$$\mcF _d(V)=\{\text{Polynomials on $V$ of degree $\leq d$} \},$$
and let  $\mcG \subset \mcF$ be a proper subfunctor. Then there exists $r$ such that $r(P)\leq r$ for any finite-dimensional $k$-vector space $V$ and $P\in \mcG (V)$.
\end{lemma}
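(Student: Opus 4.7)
The plan is to derive the lemma from Theorem~\ref{Jan}(2) via a single pullback argument. Since $\mcG$ is a proper subfunctor of $\mcF_d$, I first fix a witness: an object $W_0 \in \mcC$ together with a polynomial $Q_0 \in \mcF_d(W_0) \setminus \mcG(W_0)$. Set $\rho_0 := \rho(\dim W_0, d)$, where $\rho$ is the rank threshold supplied by Theorem~\ref{Jan}(2). The whole point is that once the rank of some $P \in \mcG(V)$ exceeds $\rho_0$ and $\dim V$ is sufficiently large, one can realize the forbidden $Q_0$ as a pullback of $P$, contradicting the subfunctor hypothesis on $\mcG$.

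To execute this, I would argue by contradiction. Suppose $P \in \mcG(V)$ has $r(P) \geq \rho_0$. If $\dim V$ is large enough that the expected fiber dimension $(\dim W_0 + 1)\dim V - \dim \mcP_d(W_0)$ is nonnegative, then Theorem~\ref{Jan}(2) guarantees that $\kk_P^{-1}(Q_0) \subset Hom_{af}(W_0, V)$ is a nonempty variety. Any $\phi \in \kk_P^{-1}(Q_0)$ is an affine map $\phi : W_0 \to V$ with $\phi^* P = Q_0$, and functoriality of $\mcG$ forces $Q_0 = \phi^* P \in \mcG(W_0)$, contradicting the choice of $Q_0$. Hence in this dimension range no $P \in \mcG(V)$ can have rank $\geq \rho_0$.

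The remaining low-dimensional range $\dim V < \dim \mcP_d(W_0)/(\dim W_0 + 1)$ is handled by the crude observation that for each such $V$ the rank of any $P \in \mcF_d(V)$ is bounded by the dimension of the space of degree-$\leq d$ polynomials on $V$, itself a function of $\dim V$ and $d$ only; this gives a single constant $C$ bounding $r(P)$ across the whole low-dimensional range. Setting $r := \max(\rho_0, C)$ finishes the proof.

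I do not foresee a real obstacle: the substantive content is entirely absorbed by Theorem~\ref{Jan}(2), which is precisely the statement that high rank on $V$ makes $\kk_P$ hit every target $Q$ on any fixed smaller space $W_0$. The only mild subtlety is verifying that the expected fiber dimension is nonnegative before invoking nonemptiness of the fiber, which the explicit inequality on $\dim V$ arranges directly.
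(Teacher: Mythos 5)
Your argument is correct and is essentially the paper's own: both proofs use Theorem \ref{Jan} to realize a polynomial outside $\mcG(W_0)$ as an affine pullback $\phi^\star(P)$ of any sufficiently high-rank $P\in\mcG(V)$, and then invoke functoriality of $\mcG$ to get a contradiction (the paper phrases it contrapositively, showing unbounded rank forces $\mcG(W)=\mcF_d(W)$ for all $W$). Your additional bookkeeping --- checking nonemptiness of the fiber via nonnegative expected dimension and bounding the rank crudely on low-dimensional $V$ --- is harmless and only makes explicit what the paper leaves implicit.
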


\begin{proof} Let $\mcG$ be a subfunctor of $\mcF _d$ such that $r(P), P\in \mcG (W) $ is not bounded above. We want to show that $\mcG (W)=\mcF _d(W)$ for any finite-dimensional $k$-vector space $W$.

Let $m=dim(W)$ and choose a polynomial $P\in \mcG (V)$ where $V$ where $V$ is a $k$-vector space $V$ such that $r(P)\geq r(m,d)$ where  $r(m,d)$ is as in Theorem \ref{Jan}. Then for any polynomial $Q$ on $W$ of degree $d$ there exist an affine map $\phi :W\to V$ such that $Q=\phi^\star (P)$. We see that $\mcG (W)=\mcF _d(W)$.
\end{proof}

\begin{remark} The paper \cite{Jan} assumed that $\mcG (V)\subset \mcP _d(V)$ are Zariski closed subsets.
\end{remark}


\section{Proof of Theorem \ref{const}} Let $\mX _n$ be as in Theorem \ref{const}.
\subsection{Rank of $\mX_n$} In this subsection we prove the part $1$ of 
Theorem \ref{const}. 
\begin{proof} We start with the following general result.

Let $\mV =\mA ^N$, let $P:\mV\to \mA$ be a homogeneous polynomial of degree
 $d$, and let $\mX =\mX _P:=\{ v\in V|P(v)=0\}$. We denote $ \mX _{sing}\subset \mX$ the subvariety of points $x\in \mX$ such that $\frac {\partial P}{\partial x_l}(x)=0,1\leq l\leq N$.
\begin{lemma}\label{cod} $codim _\mX (\mX _{sing})\leq 2 dr(P)$ where $r(P)$ is the $d$-rank of $P$.
\end{lemma}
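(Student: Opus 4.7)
The plan is to exhibit a large subvariety contained in $\mX_{sing}$ directly from the rank decomposition of $P$. Setting $r=r(P)$, I would fix a decomposition $P=\sum_{i=1}^{r}Q_iR_i$ with $\deg Q_i,\deg R_i<d$ coming from the definition of the $k$-rank, and consider the closed subvariety $Y\subset\mV$ cut out by the $2r$ equations $Q_i=0$ and $R_i=0$ for $1\le i\le r$.

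The key observation will be that $Y\subset\mX_{sing}$. The inclusion $Y\subset\mX$ is immediate since each summand of $P$ vanishes on $Y$. For the gradient, the Leibniz rule gives
$$\frac{\partial P}{\partial x_l}=\sum_{i=1}^{r}\left(\frac{\partial Q_i}{\partial x_l}\,R_i+Q_i\,\frac{\partial R_i}{\partial x_l}\right),$$
and every summand vanishes on $Y$ because either $Q_i$ or $R_i$ is a factor. Hence $\nabla P\equiv 0$ on $Y$, so $Y$ lies in $\mX_{sing}$.

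With this inclusion in hand, the codimension estimate drops out from the fact that $Y$ is cut out by at most $2r$ equations in $\mV=\mA^N$, giving $\dim Y\ge N-2r$, combined with $\dim\mX=N-1$ (the degenerate case $P\equiv 0$ being vacuous). This yields $\operatorname{codim}_{\mX}(\mX_{sing})\le 2r-1$, which is in fact strictly stronger than the stated bound $2d\cdot r(P)$.

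I do not anticipate any substantive obstacle; the only step requiring a moment of care is the Leibniz-rule verification that the partials vanish on $Y$. The extra factor of $d$ in the stated bound appears to be a convenient overestimate rather than a reflection of any sharper geometric phenomenon, and one could replace $2d\cdot r(P)$ by $2r(P)$ throughout without changing the argument.
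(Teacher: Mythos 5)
Your overall strategy is the same as the paper's---exhibit a subvariety $Y\subset \mX_{sing}$ cut out by the factors of a rank decomposition and bound its codimension by the number of defining equations---but you have skipped the one step that makes the dimension count legitimate. The inequality $\dim Y\ge N-2r$ is an application of Krull's height theorem, which only gives information when $Y\neq\emp$. Nothing in the definition of rank forces the $Q_i,R_i$ to have a common zero: they need not be homogeneous (the definition only bounds their degrees), some of them may even be nonzero constants, and already two non-proportional affine-linear forms such as $x$ and $x-1$ have empty common zero locus. If $Y=\emp$, the inclusion $Y\subset\mX_{sing}$ tells you nothing, and your codimension bound collapses.

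The paper's proof closes exactly this gap. Since $P$ is homogeneous (this is part of the setup of the lemma), one expands each $\ti Q_i\ti R_i$ into products of homogeneous components and retains the bidegrees summing to $d$; every factor then has degree at least $1$, so all the resulting $Q_j,R_j$ vanish at the origin, $0\in\mY$, and nonemptiness is guaranteed. The price is that a single product $\ti Q_i\ti R_i$ can contribute up to $d-1$ homogeneous products, which is precisely where the factor $d$ in the stated bound $2dr(P)$ comes from. So that factor is not a ``convenient overestimate'' that can be discarded, and your claimed strengthening to $2r(P)$ is not justified by this argument. (Your Leibniz-rule verification that $Y\subset\mX_{sing}$ is correct and matches the paper's.)
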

\begin{proof} By definition we can write $P$ as sum 
$P=\sum _{i=1}^r\ti Q_i\ti R_i$ where $\deg (\ti Q_i)$, $\deg(\ti R_i)<d$, $1\leq d\leq r$, and  $r:=r(P)$. Writing $\ti Q_i,\ti R_i$ as sums of homogeneous components we see that $P=\sum _{j=1}^{r(d-1)} Q_jR_j$ where $\deg (Q_j),\deg( R_j)$ are homogeneous polynomials of degrees $\geq 1$.

Let $\mY =\{ v|P(v)=0, Q_j(v)=0, R_j(v)=0, \}, 1\leq j\leq r(d-1)$. Since $\{ 0\}\in \mY$ we see that $\mY \neq \emp$. Since $\mY \subset \mV$ is defined by   $\leq 2r(d-1)+1$ equations we see that that $codim _\mX(\mY)\leq rd$. 

Since $\mY \subset \mX _{sing} $ we have $codim _\mX (\mX _{sing}) \leq 2 dr. $
\end{proof}

Now we show that $r(P_n)\geq n/d$.
As follows from Lemma \ref{cod} it is sufficient to prove that codim$_\mX (\mX _{sing})= 2n$ where $\mX =\mX _{P_n}$. Let $\mW '\subset \mW$ be the subvariety of points 
$\{x^j\}$ such that $x^a=x^b=0$ for some $a,b,1\leq a<b\leq d$. It is clear that $\mX _{sing} =(\mW ')^n\subset \mW ^n=\mV_n$. Therefore codim$_\mX (\mX _{sing})= n\times \text{codim}_\mW (\mW ')=2n$.
\end{proof} 

\subsection{Proof of the part (2) of Theorem \ref{const}}
\begin{definition}\label{X}
\begin{enumerate} 
\item For any set $X$ we denote by  $k[X]$  the space of $k$-valued functions on $X$.
\item For a subset $X$ of a vector space $V$ we denote by $\mcP _a^w(X) \subset k[X] $ the subspace of 
 weakly polynomial functions of degree $\leq a$.
\item We denote by $\mcP _a(X) \subset \mcP _a^w(X) $ the subspace of functions $f:X\to k$ which are restrictions of polynomial functions on $V$ of degree $\leq a$.
\item  Let $\mW=\mA^d$ and $\mu$ be the map $\mu:\mW \to \mA$ given by 
\[
\mu (a_1,\dots ,a_d)= \prod _{s=1}^d a_s.
\]
\item For any $n$ we define $\mV _n:=\mW ^n$ and write elements 
$v\in V_n$ in the form 
$$v= (w_1,\dots ,w_n), \ 1\leq i\leq n,\ w_i\in W.$$
\item We denote  by $P_n:\mW ^n\to \mA$ the degree $d$ polynomial  $$P_n(w_1,\dots ,w_n):=\sum _{i=1}^n\mu (w_i),$$ and write $\mX _n:=\mX _{P_n} = \{v \in \mV: P_n(v)=0\}$.
\end{enumerate}
\end{definition}

We fix $n$ and write $X$ instead of $X_n$ and $V$ instead of $V_n$. We will use notations from Definition \ref{X}. The proof is based on the  existence of a large group  of symmetries of $X$ and the existence of a linear subspace $L\subset V$ of dimension $dim(V)/d$. Since the field $k$ admissible it contains a fine subgroup $\D \subset k^\star $ of size $m>ad$.

Under this notation Theorem \ref{const} becomes: 
\begin{theorem}\label{equality} Let $k$ be an admissible field, then 
$\mcP _a^w(X) = \mcP _a(X) $.
\end{theorem}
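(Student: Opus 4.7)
The plan is to exploit the two structural features of $\mX_n$ highlighted by the authors. Let
\[
G := \{(t_i^j) \in \D^{nd} : \textstyle\prod_j t_i^j = 1 \text{ for each } i\},
\]
a finite abelian group of order $m^{n(d-1)}$, whose order is invertible in $k$ since $m$ is coprime to $\mathrm{char}\,k$. It acts on $V_n$ by coordinate rescaling, fixes each $\mu(w_i)$, and hence preserves $X$. Also fix the $G$-stable $n$-dimensional linear subspace
\[
L := \{(w_1,\dots,w_n) \in V_n : x_i^j = 0 \text{ for all } j \geq 2\} \subset X.
\]

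Since $G$ preserves both $X$ and $\mcP_a(V_n)$, it preserves $\mcP_a^w(X)$, so I would begin by decomposing
\[
f = \sum_{\chi \in \widehat G} f_\chi, \qquad f_\chi(v) = \frac{1}{|G|}\sum_{g \in G} \overline{\chi}(g)\, f(g\cdot v);
\]
each $f_\chi$ is again weakly polynomial of degree $\leq a$, so it suffices to extend each separately. Because $m > ad$, a character $\chi$ is uniquely represented by an exponent datum $(e_{i,j})$ with $\min_j e_{i,j} = 0$ and $\sum_{i,j} e_{i,j} \leq a$, and a direct monomial computation identifies
\[
\mcP_a(V_n)_\chi = \left\{ m_\chi \cdot Q(\mu(w_1),\dots,\mu(w_n)) : \deg Q \leq \tfrac{a - \sum_{i,j} e_{i,j}}{d} \right\},
\]
with $m_\chi := \prod_{i,j}(x_i^j)^{e_{i,j}}$; restricted to $X$ one may further take $Q$ independent of $\mu(w_n)$ thanks to the relation $\sum_i \mu(w_i) = 0$.

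The heart of the argument is to show that every $f_\chi$ lies in the image of $\mcP_a(V_n)_\chi \to k[X]$. Consider first $\chi = 1$: the $G$-invariant function $f_0$ descends, on the dense open subset $X^\circ \subset X$ where all $x_i^j \neq 0$, to a function $F$ on the hyperplane $\{\sum t_i = 0\} \subset \mA^n$ via $t_i = \mu(w_i)$. By constructing affine subspaces $\Lambda \subset X$ along which each $\mu(w_i)$ is a prescribed linear, respectively degree-$d$, polynomial of the parameter --- for instance, using perturbations $u_i$ supported on a single coordinate of $\mW$, respectively lines through the origin $s \mapsto (s u_1,\dots,s u_n)$ with $\sum_i \mu(u_i) = 0$ --- weak polynomiality of $f_0|_\Lambda$ first forces $F$ to be weakly polynomial of degree $\leq a$ on the linear subspace $\{\sum t_i = 0\}$ (hence actually polynomial of degree $\leq a$), and then forces $\deg F \leq a/d$. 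Propagating $f_0 = F(\mu(w_1),\dots,\mu(w_n))$ from $X^\circ$ to all of $X$ by induction on the number of vanishing coordinates, using affine lines in $X$ that join each boundary stratum to the interior, completes the case $\chi=1$. For non-trivial $\chi$, on $\{m_\chi \neq 0\} \cap X$ the ratio $f_\chi / m_\chi$ is $G$-invariant of lower degree, reducing to the trivial case and producing a candidate polynomial $Q$; the polynomial restriction $f_\chi|_L$ together with a similar propagation across $\{m_\chi = 0\}$ upgrades the identity $f_\chi = m_\chi \cdot Q(\mu(w_1),\dots,\mu(w_n))$ to all of $X$.

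The main obstacle is the propagation step: having established equality on an open dense $G$-invariant subset of $X$ (either $X^\circ$ or $\{m_\chi \neq 0\}$), one must transfer it to the whole variety, whose boundary is stratified by the loci where various coordinates vanish. Weak polynomiality provides information only along affine subspaces contained in $X$, so for every boundary point one must exhibit an affine line inside $X$ connecting it to the open subset on which polynomiality has already been established. The assumption that $n$ is large compared to $a$ and $d$ is exactly what ensures a rich enough supply of such lines; the detailed combinatorial bookkeeping needed to handle all strata uniformly, and to verify that the extensions for different characters $\chi$ combine into a single polynomial of degree $\leq a$ on $V_n$, is the technical heart of the proof.
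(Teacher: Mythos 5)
Your overall strategy---decompose $f$ under the torus $G$ of $\D$-rescalings preserving each $\mu(w_i)$, identify the $\chi$-isotypic part of genuine polynomials as $m_\chi\cdot Q(\mu(w_1),\dots,\mu(w_n))$, descend the invariant part along $\nu=(\mu(w_1),\dots,\mu(w_n))$ to the hyperplane $\{\sum_i t_i=0\}$, and propagate from a dense stratum to all of $X$---is essentially the paper's. But there is one genuine gap, at the step ``a character $\chi$ is uniquely represented by an exponent datum with $\sum_{i,j} e_{i,j}\le a$.'' A character of $G\cong(\mZ/m\mZ)^{n(d-1)}$ is represented (after normalizing $\min_j e_{i,j}=0$) by exponents $e_{i,j}\in[0,m)$, and most of the $m^{n(d-1)}$ characters violate $\sum_{i,j} e_{i,j}\le a$. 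For such $\chi$ the target space $\mcP_a(V_n)_\chi$ is zero, so your plan can only succeed if one first proves $f_\chi=0$ for every such non-admissible $\chi$; nothing in your proposal does this, and it does not follow from weak polynomiality of $f_\chi$ alone.

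The paper closes exactly this gap with an ingredient you never introduce: the symmetric group $(S_d)^n$ permuting the $d$ coordinates within each factor $\mW$. This produces a family of affine sections $\kk_\Gg$ of $\nu$ (images of $c\mapsto(c,1,\dots,1)$, permuted by $\Gg$), each a copy of the hyperplane $\{\sum_i c_i=0\}$ sitting inside $X$, on which $f$ restricts to an honest polynomial $h_\Gg$ of degree $\le a$. Two such restrictions differ by a monomial factor $\prod_i l_i^{\alpha_i}$ determined by $\chi$ and $\Gg$, and comparing degrees forces either the exponents to be admissible or $h_\Gg\equiv 0$ (Lemma \ref{zero} and the degree bound on the candidate extension both run this way). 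A related slip: your subspace $L=\{x_i^j=0,\ j\ge2\}$ lies entirely in the locus where every $\mu(w_i)=0$, so $f|_L$ carries no information about the descended function; the useful section is $x_i^j=1$ for $j\ge2$ and its $(S_d)^n$-translates, which is what your descent step implicitly uses. With these repairs your outline matches the paper's argument; the boundary propagation you flag as the ``technical heart'' is done there by the line-by-line argument you sketch, resting on the fact (needing $|\D|>ad$, i.e., admissibility) that a polynomial of degree $\le ad$ vanishing on $\D^N$ vanishes identically.
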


\begin{proof}We start with
the following result.
\begin{claim}\label{many}
Let $Q$ be a polynomial of degree $\leq ad$ on $k^N$ such that $Q_{|\D ^N}\equiv 0.$ Then $Q=0$.
\end{claim}
\begin{proof} The proof is by induction in $N$. If $N=1$   then $Q=Q(x)$ is polynomial such that $Q(\delta) =0$ for $\delta \in \D$. Since $|\D| > ad$ we see that $Q=0$.

Assume that the result is know for $N=s-1$. Let $Q$ be a polynomial of degree $\leq ad$ on $k^s$ such that $Q_{|\D ^s}\equiv 0.$ By induction we see that $Q(\delta ,x_2,\dots ,x_s)\equiv 0$ for all $\delta \in \D$.
Then for any $x_2,\dots ,x_s $ the polynomial $x\to Q(x,x_2,\dots ,x_s)$ vanishes for all $\delta \in \D$. Therefore $Q(x,x_2,\dots ,x_s) =0$ for all $x\in k$.
\end{proof}

\begin{definition}
\begin{enumerate}
\item $\GG :=(S_d)^n$. The group $\GG$ acts naturally on $X$.
\item    $L:=\{(c_1, \ldots, c_n)\in k^n |\sum_{i=1}^nc_i=0\}.$ 
\item $L(m)=(\D) ^n\cap L\subset k^n$.
\item For $c\in k$ we write $w(c):=(c,1,\dots ,1)\in W$.
\item   $\kk :L\ho X\subset L\subset V$ is  the linear map given by 
$$\kk (c_1, \ldots, c_n) := (w(c_1), \ldots, w(c_n)).$$

\item $T_1:= \{ (u_1,\dots ,u_d)\in (\D)^d|\prod _{j=1}^du _j=1\}$. Let $T=T_1^n$.
\item For any $j,j'$, $1\leq j\neq j'\leq d$ we denote by $\phi _{j,j'}: \D \to T_1 $ the 
morphism such that $\phi _{j,j'}(u)= ( x_l(u)), 1\leq l\leq d $ where $x_j(u)=u, x_{j'}(u)=u^{-1} $ 
and $x_l(u) =1$ for $l\neq j,j'$.
\item We denote by $\Theta _1$ the group of homomorphisms  $\chi :T_1\to k^\star$.
\item For $\chi \in \Theta _1 , j,j', 1\leq j\neq j'\leq d $ we define
a homomorphism $\chi _{j,j'}: \D \to k^\star $ by $\chi _{j,j'}:=\chi \circ \phi _{j,j'} $. Since $\D \cong \mZ /m\mZ$ there exists unique $\alpha _{j,j'}(\chi )\in [-m/2,m/2]$ such that 
$\chi _{j,j'}(u)= u^{\alpha _{j,j'} (\chi )}$ for any $u\in \D$. 
\item Denote by $\Theta _1^{adm}=\{ \chi  \in \Theta _1 : | \alpha _{j,j'}(\chi ) |\leq a\}$, and by  $\Theta_1^{adm,+}\subset \Theta_1^{adm} $ the subset of $\chi$ such that $\alpha _{j,j'} \geq 0$ if $j<j'$.
\item Denote by $\Theta=(\Theta _1 )^n$  the groups of characters 
$\theta:T\to k^\star$. 
\item Denote by  $\Theta^{adm,+}:=(\Theta _1 ^{adm,+})^n$ and by
$\Theta^{adm}:=(\Theta _1 ^{adm})^n$.
\item For any $k$-vector space $R$, a representation $\pi : T \to Aut (R)$ and  $\theta \in \Theta $ we define 
$$R^\theta =\{ r\in R|\pi(t)r=\theta (t)r, \ t\in T\}.$$
Since $|T|$ is prime to $q$ we have a direct sum decomposition 
$R=\oplus _{\theta \in \Theta }R^\theta$.
and on $X$.  \\
\item For any function $f:X\to k, \Gg \in \GG$ define a function $h_{\Gg ,f} :L\to  k$ by 
$h_{\Gg ,f} :=f\circ  \kk _\Gg$.
\end{enumerate}
\end{definition}

\begin{claim}\label{pol} For any $f\in \mcP _a^w(X) , \Gg \in \GG $ the function 
$h_{\Gg ,f} $ is a polynomial of degree $\leq a$.
\end{claim}

\begin{proof} Since $f\in \mcP _a^w(X) $ we have  $h_{\Gg ,f}\in \mcP _a^w(L) $. Since $L$ is linear space we see that $h_{\Gg ,f} $ is a polynomial of degree $\leq a$.
\end{proof}

\begin{definition} We denote by $\mcP _a^{\bar w}(X)$ the space of functions $f$ such that $h_{\Gg ,f} $ is a polynomial of degree $\leq a$ on $L$ for all $\Gg \in \GG $.
\end{definition}

\begin{claim}\label{plus}
\begin{enumerate}
\item The subset $\Theta^{adm}$ of $\Theta$ is $\GG$-invariant. 
\item For any $\theta \in \Theta^{adm} $ there exists $\Gg \in \GG$ such that $\theta \circ \Gg \in  \Theta^{adm,+} $
\end{enumerate}
\end{claim}

The group $T$ acts naturally on $X$ and on spaces $\mcP ^{\bar w}_a(X)$ and $\mcP _a(X)$ and we have direct sum decompositions  
\[\mcP ^ {\bar w}_a(X) =\oplus _{\theta \in \Theta}
\mcP ^ {\bar w}_a(X) ^\theta\] 
and 
\[\mcP _a(X) =\oplus _{\theta \in \Theta}
\mcP _a(X) ^\theta.
\]

Therefore  to prove  Proposition \ref{equality} it  suffices to show that $\mcP^{\bar w}_a(X) ^\theta =\mcP _a(X) ^\theta$ for any $\theta \in \Theta$. This will follow from the following statement.

\begin{proposition}\label{Id} 
\begin{enumerate}
\item  For any function $f:X\to k$  satisfying the equation 
$f(tx)=\theta (t)f(x), t\in T ,x\in X, \theta \in \Theta$ and such that $h_{\Gg ,f} $ are polynomial functions on $L$ of degree $\leq a$ for all $\Gg \in \GG$ there exists a polynomial $F$ on $V$ of degree $\leq ad$ such that $f=P_{|X}$.
\item Let $f:X\to k$ be a weakly polynomial of degree $<a$ on $X$ which 
is a restriction of  polynomial function of degree $\leq ad$ on $V$. Then $f$ is a restriction of 
polynomial function of degree $\leq a$ on $V$. 
\end{enumerate}
\end{proposition}

We start  a proof of the first part of Proposition \ref{Id} with a set of notation.
Let $f:X\to k$ satisfy  $f(tx)=\theta (t)f(x), t\in T ,x\in X$ and such that $h_{\Gg ,f} $ are polynomial functions on $L$ of degree $\leq a$ for all $\Gg \in \GG$. 

\begin{definition}
\begin{enumerate}
\item  We write   $h,h_\Gg :L \to k$ instead of $h_{Id,f}$ and $h_{\Gg ,f}$.
\item $W^0:=\{ w= (x_1,\dots ,x_d)|x_i\in \D $ for $i\geq 2\}\subset W$.
\item $X^0:=(W^0)^n\cap X$.
\item $W^0_\Gg :=\Gg (W^0), \Gg \in \GG$.
\item Let $\nu :V \to k^n$ be the map $\nu (w_1,\dots ,w_n)=(\mu (w_1),\dots , \mu (w_n))$. It is clear that the restriction of $\nu$ on $X$ defines a map $\nu :X\to L$.
\end{enumerate}
\end{definition}

We start with some observations:
\begin{claim}\label{tr}
\begin{enumerate}
\item For any  $x\in X^0, 1\leq i\leq n$ there exist unique 
$t(x)\in T$ such that $x=t(x)\kk (\nu (x))$.
\item $f(x)=\theta (t(x))f(\kk (\nu (x)))$ for any $x\in X^0$.
\item For any $\Gg \in \GG$, $l\in L$ we have $\nu (\Gg (l))=l$.
\end{enumerate}
\end{claim}

\begin{lemma}\label{zero} We have $\mcP_a^ {\bar w}(X)^\theta =\{0\}$ for any 
$\theta \not \in \Theta^{adm}$.
\end{lemma}

\begin{proof}Assume that if 
$\theta \not \in \Theta^{adm}$. Then there exist $i,j,j',1\leq i\leq n, 1\leq  j,j'\leq d$ such that $\alpha _{j,j'}(\chi _i)\geq a$. Choose $s\in S_d$ be such $s(j)=1,s(j')=2$ and denote by $\ti s\in \GG$ the image of $s$ under the imbedding $S_d\ho \GG$ as the $i$-factor. After the replacement $f\to f\circ \ti s, \theta \to \theta \circ s$ we may assume that  $\alpha _{1,2}(\chi _i) > a$.

The functions $h:=\kk ^\star (f)$ and  $h_s:=\kk _s^\star (f)$ are weakly polynomial functions of degrees $\leq a$ on the linear space $L$. Therefore $h$ and $h_s$ are polynomial functions of degrees $\leq a$.

Let $\phi :=\zeta _i\circ \phi _{1,2}:\D  \to T$. Then $\kk _s(l)=\phi (l_i)\kk (l)$, $l=(l_1, \dots ,l_n)\in L$. Therefore for any $l\in L$ such that $l_i\in \D$ we have
$h_s(l)=l_i^{\alpha _{1,2}(\chi _i)}h(l)$. Since $\alpha _{1,2}(\chi _i)>a$ and $\alpha _{1,2}(\chi _i)\le m/2$ this is only possible if $h=0$. 
\end{proof}

\begin{corollary} As follows from
 Claim \ref{plus} it is sufficient to prove Proposition \ref{Id} for 
$\theta \in \Theta ^{adm,+}$. 
\end{corollary}

Since $L\subset k^n$ is a linear subspace we can choose a polynomial $F_h$ on $V $of degree $\leq a$ extending $h.$ One way to define $F_h$ a the composition $h\circ \pi$ where 
$\pi :k^n\to L$ is a a linear projection.
\begin{remark}Since the map $\mcP _a(X)^\theta \to \mcP _a(L), F\to F_{|L} $ an a choice of an extension of $f$ to $F\in \mcP _a(X)^\theta $ is determined by a choice of an extension of $h$ to a polynomial $F_h$ on $k^n$.
\end{remark}
\begin{definition}  

$P:V\to k$ be the polynomial given by
$$P(v)=\prod _{i=1}^n \prod _{j=1}^d(x_i^j)^{\alpha ^{1,j}(\chi _i)}F_h(\nu (v)), \ v=( x_i^j)$$
\end{definition}
\begin{lemma}$deg(P)\leq ad$.
\end{lemma}
\begin{proof}Let $b=deg(h)$. It is sufficiently to show that for any sequence $\bar e=\{ e(i)\}, e(i)\in [1,d],1\leq i\leq n$ we have 
$\sum _{i=1}^n \alpha ^{1,e(i)}(\chi _i) +b\leq a$.

Suppose there exists $\bar e$ such $\Gg \in \GG$ such that $\sum _{i=1}^n \alpha ^{1,e(i)}(\chi _i) +b>a$.
Since $\theta \in \Theta _n^{adm}$ there exists a subset $I$ of $[1,n]$ such that 
$a<\sum _{i\in I} \alpha ^{1,e(i)}(\chi _i) +b\leq 2a$.

Let $\Gg \in \GG \Gg =\{\bs _i\}, \bs _i \in S_d$ such that  
$e(i)=\bs _i(1)$ for $i\in I$ and $\bs _i=Id$ if $i\not \in I$. Consider $h_{\Gg}:= \kk _\Gg ^\star (f)$. On one hand it is a polynomial of degree $\leq a$ on $L$. On the other $h_\Gg (l)=h(l)\prod _{i\in I}l_i^{ \alpha ^{1,e(i)}(\chi _i) }$. The inequalities 
$a<\sum _{i\in I} \alpha ^{1,e(i)}(\chi _i) +b\leq 2a$
imply that $h\equiv 0$.
\end{proof}

By construction $P_{|L}\equiv f _{|L}$.
Let $\bar f:=f-P$. Then $\bar f$ is weakly polynomial function of degree $\leq ad$ vanishing on $L$ and $\bar f(tx)=\theta (t)f(x)$ for $t\in T, \ x\in X$. By Claim \ref{tr} we see that $\bar f_{|X^0}\equiv 0$. 
We will show that $\bar f\equiv 0$. We start with the following observation.
\begin{remark} 
Since $\bar f$ is a weakly polynomial function  of degree $\le ad$ and $q>ad$ we know that $f{|\mcL}\equiv 0$ for any line $\mcL \subset X$ such that $f$ vanishes on more then $ad$ points on $\mcL$.
\end{remark}

Let $Y:=\bigcup _{\Gg \in \GG}\Gg (X_0)$.
\begin{lemma}\label{van}$\bar f_{|Y} \equiv 0 $
\end{lemma} 

\begin{proof} Since $\Gg (X^0) =TL_\Gg$ it is sufficient to show that
 $f _{|L_\Gg} \equiv 0$ 
for all $\Gg \in \GG $. Let $h_\Gg :L\to k$ be given by 
$h_\Gg (l)=\bar f(\Gg (l))$. We have to show that $h_\Gg \equiv 0$. 
Since 
$h_\Gg$ is a polynomial of degree $\leq ad$ it follows from Claim 
\ref{many} that it is sufficient to show that 
the restriction of $h_\Gg$ on $L(m)$ vanishes. But for any $l\in L(m)$ we
 have  $\Gg (l)=tl', t\in T_n,l'\in L$. Since $\bar f(tx)=\theta (t)f(x)$
we see that $h_\Gg (l)=0$.
\end{proof}

Let $W_1\subset W$ be the subset of  $(x_1,\dots ,x_d)$ for which there exists $j_0, 1\leq j_1\leq d$ such that $x_j\in \D$, for $j\neq j_1$.

Let $W_2\subset W$ be the subset of  $(x_1,\dots ,x_d)$ for which there exists $j_1\neq j_2, 1\leq j_1, j_2\leq d$ such that $x_j\in \D$, for $j\neq j_1, j_2$. Define $Y^1_2=W_2\times (W_1)^{n-1}$. 

\begin{claim} $\bar f_{|Y^1_2}\equiv 0$.
\end{claim}
\begin{proof} Fix $x=(x_i^j) \in Y^1_2$. By definition after a replacement of $x$ by $\Gg tx, t\in T,\Gg \in \GG$ we can assume that $x_1^j=1$ for $j\geq 3$, and $x_i^j=1$  for $j\geq 2$.

Consider the subset 
$$M=\{(y_i^j) \in X |y_1^2=x_1^2, y_1^j=1 ,j\geq 3, y_i^j=1,j\geq 2\}$$

A point in $M$ corresponds to $\{ y_i^1\}_{i=1}^n$ such that $y_1^1x_1^2+\sum _{i=2}^ny_i^1=0$. So $M$ is a linear subspace in $X$. Let 
$N :=\{ y_i^j \in M| y_1^1\in \D \}$. 

Consider the restriction 
$h_M$ of $\bar f$ on $M$. Since $\bar f$ is a weakly polynomial function of degree $\leq ad$ and $M$ is vector space we see that $h_M$ is a  polynomial function of degree $\leq ad$ on $M$. Since $N\subset Y$ we see that ${h_M}_{|N}=0$. It follows now from Claim \ref{many} that $h_M=0$.
So $\bar f(x)=h_M(x)=0.$
\end{proof}
Repeating the same arguments we see that $\bar f=0$.\\

We proved the first part Proposition \ref{Id}.   The second part follows from the following general result.
\begin{lemma}\label{almost}
 Let $\mZ\subset \mV$ be a homogeneous $k$-subvariety of degree   $d,f:Z\to k$ a polynomial function of degree $ad$ which is a weakly polynomial function on $Z$ of degree $\leq a$. Then it is a restriction of polynomial function on $V(k)$ of degree $\leq a$. \end{lemma}
 
 Lemma \ref{almost} follows inductively from the following claim

\begin{claim}\label{we} Let $f:Z\to k$ be a polynomial function of degree $\leq a$ which is weakly polynomial of degree $<a$ and $a<q$. Then $f$ is polynomial of degree $<a$.
\end{claim}

\begin{proof} 
We can write $f$ as a sum $f=Q+f'$ where $deg(f')<a$ and $Q$ is homogeneous of degree $a$.
Since $f$ is weakly polynomial of degree $< a$ the function 
$h$ is also weakly polynomial of degree $<a$. It is sufficient to show that $h\equiv 0$.

Choose $x\in X$ and consider the function $g$ on $k,g(t)=h(tx)$. Since $X$ is homogeneous  $tx \in X$. Since $Q$ is homogeneous of degree $a$ we have $g(t)=ct^a$. On the other hand, since $Q$ is  weakly polynomial of degree $<a$ we see that $g$ is a polynomial of degree $<a $. Since $a<q$ we see that $g\equiv 0$. So $Q(x)=g(1)=0$.
\end{proof}

This completes the proof of Theorem \ref{equality}.
\end{proof}

\section{Proof of Theorem \ref{Jan}.}\label{jan-finite}
We use notation of Definition \ref{proj}. So 
for any $k$-vector field $\mV ,P\in \mcP _d(\mV)$ we consider the map 
$\kk _P: Hom_{af}(W,V) \to \mcP _d(W)$. The first part of following result implies that finite fields $k$ of characteristic $>d$ have the property $c_d$.

\begin{proposition}\label{Jan1}
For any $m\geq 0$ there exists $r=r(m,d)$ such that the following holds. For any finite field $k$ of characteristic $>d$, any polynomial  $P\in \mcP _d(\mV) $ of rank $\geq r$ and 
any  $R\in \mcP _d(\mA ^m) $ the fiber
 $\mZ_Q:=\kk _P^{-1}$ is {\it large}. That is 
\begin{enumerate}
\item $Z_Q(k)\neq \emp$.
\item Let $k_l/k$ be the extension of degree $l$, then
$$lim _{l\to \infty}\frac {log _q (| \kk _P^{-1}(Q)(k_l) |)}{l}= dim(Hom_{af}(W,V))- dim(\mcP _d(W)).$$ 
\end{enumerate}
\end{proposition}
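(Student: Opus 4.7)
The plan is to count $|\kk _P^{-1}(Q)(k)|$ by Fourier analysis on the finite abelian group $\mcP _d(\mW )(k)$, showing that only the trivial character contributes a main term of the expected size and that all other character sums are negligible once $r(P)$ is large relative to $m$ and $d$. Fix a nontrivial additive character $\psi :k\to \mC ^\star $. Since the characters of the finite-dimensional $k$-vector space $\mcP _d(\mW )(k)$ are parametrized by linear functionals $\lambda $ on $\mcP _d(\mW )$ via $f\mapsto \psi (\lambda (f))$, one has
\[
|\kk _P^{-1}(Q)(k)|=\frac{1}{|\mcP _d(\mW )(k)|}\sum _\lambda \psi (-\lambda (Q))\, S_\lambda ,\qquad S_\lambda :=\sum _{\phi \in Hom_{af}(\mW ,\mV )(k)}\psi (\lambda (\kk _P(\phi ))),
\]
and the $\lambda =0$ term equals $q^{dim(Hom_{af}(\mW ,\mV ))-dim(\mcP _d(\mW ))}$, which is precisely the predicted main term.

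Since $dim(\mW )\leq m$ is bounded and $|k|>d$, the dual $\mcP _d(\mW )^\star $ is spanned by evaluation functionals (Lagrange interpolation), so every $\lambda $ has the form $\lambda (f)=\sum _{j=1}^Nc_jf(w_j)$ for some $w_j\in \mW $, $c_j\in k$. Writing an affine map in coordinates as $\phi (w)=v_0+\sum _{i=1}^mw^{(i)}v_i$, the function $\phi \mapsto \lambda (\kk _P(\phi ))=\sum _jc_jP(\phi (w_j))$ becomes a polynomial $R_\lambda $ of degree $\leq d$ in the coordinates of $(v_0,\ldots ,v_m)\in \mV ^{m+1}$. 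The key technical step is to produce a function $g:\mN \to \mN $ with $g(r)\to \infty $ such that $r(R_\lambda )\geq g(r(P))$ whenever $\lambda \neq 0$. The intuition is that $\lambda \neq 0$ forces at least one partial specialization of $R_\lambda $ in the variables $v_1,\ldots ,v_m$ to recover a nontrivial linear combination of shifted copies of $P$, and any low-rank decomposition of $R_\lambda $ then gives, by specialization and projection, a low-rank decomposition of this combination and ultimately of $P$ itself.

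Granting the rank-transfer lemma, I apply the bias-versus-rank theorem of Green--Tao / Kaufman--Lovett / Bhowmick--Lovett (valid over $\mF _q$ with $q>d$): a polynomial of degree $\leq d$ and rank $\geq r'$ has bias at most $q^{-s(r')}$ with $s(r')\to \infty $. Hence $|S_\lambda |\leq q^{dim(Hom_{af}(\mW ,\mV ))}\cdot q^{-s(g(r(P)))}$ uniformly in $\lambda \neq 0$. Summing the at most $|\mcP _d(\mW )(k)|\leq q^{\binom{m+d}{d}}$ nontrivial contributions yields a total error dominated by the main term once $r=r(m,d)$ is chosen so that $s(g(r))>\binom{m+d}{d}$. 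This establishes $|\kk _P^{-1}(Q)(k)|=(1+o(1))|Hom_{af}(\mW ,\mV )(k)|/|\mcP _d(\mW )(k)|$, giving part (1).

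For part (2) the same analysis applied over $k_l=\mF _{q^l}$ is uniform in $l$: algebraic rank does not decrease under base change, so the threshold $r(m,d)$ works for every extension and the bias bound is uniform. The main term over $k_l$ equals $q^{l(dim(Hom_{af}(\mW ,\mV ))-dim(\mcP _d(\mW )))}$, while the error is of strictly smaller exponential growth in $l$; dividing $\log _q$ by $l$ and letting $l\to \infty $ yields the asserted dimension. The principal obstacle is the rank-transfer step: one must show that no nontrivial evaluation-based linear combination of copies of $P$ pulled back through a generic affine map can collapse into a polynomial of substantially smaller rank. This is a Schmidt-type statement, and will most likely be handled by passing through partition or analytic rank and invoking its quantitative equivalence with algebraic rank in characteristic $>d$.
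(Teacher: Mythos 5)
Your overall framework is the same as the paper's: count the fiber by expanding over the characters of $\mcP_d(\mW)(k)$, extract the main term $q^{\dim(Hom_{af}(\mW,\mV))-\dim(\mcP_d(\mW))}$ from the trivial character, and kill the nontrivial characters with the bias--rank theorem. The paper phrases this with the monomial-coefficient functionals (writing $P(\phi(x))=\sum_{\bl}c_{\bl}(\phi)x^{\bl}$ and showing the map $\phi\mapsto\{c_{\bl}(\phi)\}$ is surjective by an exponential sum over the dual of $k^{|\bL|}$), whereas you parametrize the dual by evaluation functionals; these are equivalent since $|k|>d$.

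The genuine gap is exactly the step you flag as the ``principal obstacle'': showing that for $\lambda\neq 0$ the polynomial $\phi\mapsto\lambda(P\circ\phi)$ has rank growing with $r(P)$. You leave this as an intuition and propose to route it through the quantitative equivalence of analytic and algebraic rank; that is both heavier than necessary and not how the paper proceeds. The paper proves it directly and elementarily: a nontrivial combination $\sum_{\bl}b_{\bl}c_{\bl}(\phi)$ with $b_{\bl_0}\neq 0$, restricted to the affine subspace of $Hom_{af}(\mW,\mV)$ where the columns $w_{l_1}=\cdots=w_{l_d}$ of $\phi$ indexed by $\bl_0$ coincide, equals $b_{\bl_0}\,|\mcI_{\bl_0}|\,P(w_{l_1})+R$ with $R$ of lower degree in $w_{l_1}$; since the algebraic rank drops by at most a bounded amount when one adds lower-degree terms, and only decreases under restriction to a subspace, the combination has rank $\geq r(P)-O_d(1)$. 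Note this is where $\mathrm{char}(k)>d$ is used: the multinomial factor $|\mcI_{\bl_0}|\leq d!$ must be nonzero in $k$. Without carrying out this specialization argument (or an equivalent substitute) your bound $|S_\lambda|\leq q^{\dim(Hom_{af})}q^{-s}$ is unjustified. A second, smaller issue: in part (2) you assert that algebraic rank ``does not decrease under base change''; the inequality actually goes the other way ($r_{k_l}(P)\leq r_k(P)$), and whether it can drop unboundedly is precisely the open Question in the introduction. To make part (2) correct one should either take the rank hypothesis over $\bar k$ (as the paper effectively does in Appendix C, where $r_{k_l}(P)\geq r_{\bar k}(P)$ for every finite $k_l$) or note that the rank-transfer lemma and bias bound are applied over each $k_l$ with the rank computed there.
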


\begin{remark} 
\begin{enumerate}
\item It is clear that Proposition shows that $k$ satisfies the first condition of the property $c_d$. We show in Appendix C that it also implies the $k$ satisfies the second condition.
\item A similar proof to the one below gives the more general result: For any $m\geq 0$ there exists $r=r(m,d,d')$ such that for any finite field $k$ of characteristic $>dd'$, any polynomial 
$P\in k[V_n]$ of degree $d$ and  of $d$-rank $\geq r$ and 
any polynomial $R\in k[x_1,\dots ,x_m]$ of degree $\leq dd'$ there exists a degree $d'$ map 
$\phi :\mA ^m\to V_n$ such that $R=\phi^\star (P)$.
\end{enumerate}
\end{remark}

\begin{proof}
Let $P:V=k^n \to k$ be a polynomial.  Denote by $W$ the space linear maps $w:k^m\to k^n$.
Consider the polynomial $Q$ on $W\times k^{m}$ ded by
$$Q(w,x)=P(w(x))=\sum _{\bl \in \bL}c_\bl (w)x^{\bl},
$$
where $\bL$ is the set of ordered tuples $(j_1, \ldots, j_m)$ with $j_i \ge 0$ and  $\sum_{i=1}^m j_i \le d$.

\begin{lemma} If $p>d$ and  $rank(P)>r$ then $\{c_{\lambda}(w)\}_{\bl \in \bL}$ is of rank $\ge r$.
\end{lemma}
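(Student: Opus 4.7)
The plan is to prove the contrapositive: assuming a nonzero $k$-linear combination $g(w) = \sum_\lambda \alpha_\lambda c_\lambda(w)$ has Schmidt rank $<r$, I aim to show $\mathrm{rank}(P) \le r$. The identity to exploit is the defining relation $\sum_\lambda c_\lambda(w)\, x^\lambda = P(w(x)) = P\bigl(\sum_{i=1}^m x_i w_i\bigr)$, where I view the linear map $w$ as a tuple of columns $(w_1,\ldots,w_m) \in V^m$. The rank of the system means the minimum Schmidt rank over all nonzero combinations, so producing a single low-rank decomposition of $P$ from any such $g$ suffices.

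First treat the case where $P$ is homogeneous of degree $d$, so only the $c_\lambda$ with $|\lambda|=d$ are nonzero and $g$ is homogeneous of degree $d$ on $V^m$. Since $\mathrm{char}(k)>d$, polarization applies: $c_\lambda(w) = \binom{d}{\lambda}\Phi_P(w_1^{\lambda_1},\ldots,w_m^{\lambda_m})$, where $\Phi_P$ is the symmetric $d$-linear form with $\Phi_P(v,\ldots,v)=P(v)$ and $w_i^{\lambda_i}$ denotes $w_i$ repeated $\lambda_i$ times. The crucial move is to specialize $w_i = t_i v$ for scalars $t_i \in k$ and a single vector $v \in V$. By $d$-linearity the whole system collapses onto $P$:
\[
g(t_1 v,\ldots,t_m v) \;=\; \Bigl(\sum_{|\lambda|=d}\alpha_\lambda\binom{d}{\lambda}t^\lambda\Bigr)P(v) \;=:\; \gamma(t)\,P(v).
\]
The polynomial $\gamma(t)$ has degree $d$ and is nonzero: $(\alpha_\lambda)$ is nonzero, the multinomial coefficients $\binom{d}{\lambda}$ are units in $k$ because $\mathrm{char}(k)>d$, and the monomials $\{t^\lambda\}_{|\lambda|=d}$ are linearly independent. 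Since $|k|\ge\mathrm{char}(k)>d$, Schwartz--Zippel supplies a point $t^* \in k^m$ with $c := \gamma(t^*) \neq 0$.

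Now pull back any decomposition $g = \sum_{i=1}^s A_i B_i$ (with $s<r$ and $\deg A_i,\deg B_i < d$) along the linear map $v\mapsto(t_1^* v,\ldots,t_m^* v)\colon V \to V^m$ to obtain
\[
c\,P(v) \;=\; \sum_{i=1}^s A_i(t_1^* v,\ldots,t_m^* v)\, B_i(t_1^* v,\ldots,t_m^* v),
\]
which is a valid Schmidt decomposition of $cP$ into $s<r$ products of polynomials of degree $<d$ in $v$. Hence $\mathrm{rank}(P) \le s < r$, contradicting $\mathrm{rank}(P)>r$. For inhomogeneous $P = P_0 + \cdots + P_d$, the coefficient $c_\lambda$ with $|\lambda|=j$ depends only on $P_j$, so the same specialization argument applied to the top nonzero graded component of $g$ yields the analogous bound with at most a multiplicative factor of $d$ lost in passing to the top-degree part of a rank-$s$ polynomial; these losses are absorbed into the eventual choice of $r$.

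The main obstacle I expect is simply finding the right specialization. The polarization identity is classical, and rank behaves well under pullback, but the idea of collapsing all columns of $w$ simultaneously onto a single line $k\cdot v\subset V$ --- so that an arbitrary nontrivial combination of the $c_\lambda$ becomes a scalar multiple of $P$ --- is what makes the system detect $P$ directly. Once that collapse is in hand, the rest is elementary.
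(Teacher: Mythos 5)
Your argument is correct in its core and takes a genuinely different route from the paper. The paper argues directly: given a nontrivial combination $\sum b_{\lambda}c_{\lambda}(w)$, it picks a monomial with $b_{\lambda}\neq 0$ and restricts to the diagonal subspace $w_{l_1}=\dots=w_{l_d}$ of $W$, where that term becomes a nonzero multiple of $P(w_{l_1})$ plus terms of lower degree in $w_{l_1}$; since restriction to a subspace cannot raise the number of terms in a decomposition, the combination inherits the rank of $P$. Your contrapositive instead collapses \emph{all} columns onto a single line via $v\mapsto(t_1^*v,\dots,t_m^*v)$, so that the entire combination becomes $\gamma(t^*)P(v)$, and the only thing to check is that $\gamma$ is a nonzero polynomial of degree $d$ (true since $p>d$ makes the multinomial coefficients units) and hence has a non-root because $|k|>d$. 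This buys you something real: the paper's diagonal restriction must be done carefully when several coefficients interact (e.g.\ for $d=2$, restricting to $w_1=w_2$ produces $(2b_{12}+b_{11}+b_{22})P(u)$, which could vanish unless the cases are ordered properly), whereas your $\gamma(t)$ packages all coefficients at once and its nonvanishing is automatic from linear independence of the monomials $t^{\lambda}$. Two caveats. First, your reduction for the inhomogeneous case is misstated: the setup uses \emph{affine} maps $w$ (the paper's $d=2$ computation carries shifts $s_i$), and then $c_{\lambda}$ with $|\lambda|=j$ receives contributions from all $P_{j'}$ with $j'\geq j$, not only from $P_j$; your specialization still works if you also set the shift to $t_0v$ and observe that the coefficient of $P_d(v)$ in the specialized combination is a nonzero polynomial in $(t_0,t)$, after which the lower-degree terms cost only an additive constant in rank. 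Second, the additive (not multiplicative) losses of $O(1)$ in rank from absorbing lower-degree terms should be tracked, but the paper is equally cavalier about this, so it is not a defect specific to your proof.
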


\begin{proof}
We begin with the argument for the case $d=2$.
We are given $P(t)=\sum_{1 \le  i \le j \le n}a_{ij}t_it_j +\sum_{1 \le  i \le n}a_{i}t_i +a$ of rank $r$.
Note that  for any linear form  $l(t)=\sum_{i=1}^n c_it_i$ we have that $P(t)+l(t)$ is of rank $\ge r$.

We can write
\[\begin{aligned}
P(w(x)) &= \sum_{1 \le i \le j \le n} a_{ij}(w^i(x)+s_i)(w^j(x)+s_j)+ \sum_{1 \le i \le n} a_{i}(w^i(x)+s_i)+a \\
&= \sum_{1 \le i \le j \le n} a_{ij}\sum_{k,l=1}^m w^i_{k}w^j_{l}x_kx_l+\sum_{1 \le i \le n} a_{i}\sum_{k=1}^m w^i_{k}x_k \\
&+  \sum_{1 \le i \le j \le n} \sum_{k=1}^m a_{ij}(s_iw^j_k+s_jw^i_k)x_k+ \sum_{1 \le i \le j \le n} a_{ij}s_is_j + \sum_{1 \le i \le n} a_{i}s_i +a
\end{aligned}\]
Which we can write as
\[\begin{aligned}
&\sum_{1 \le k < l \le m}   \sum_{1 \le i \le j \le n} a_{ij}(w^i_{k}w^j_{l} + w^i_{l}w^j_{k}) x_kx_l +
\sum_{1 \le  l \le m} \sum_{1 \le i \le j \le n} a_{ij}w^i_{l}w^j_{l}x^2_l \\
&  + \sum_{k=1}^m [\sum_{1 \le i \le n} a_{i}\ w^i_{k}
+ \sum_{1 \le i \le j \le n}  a_{ij}(s_iw^j_k+s_jw^i_k)]x_k + \sum_{1 \le i \le j \le n} a_{ij}s_is_j + \sum_{1 \le i \le n} a_{i}s_i +a
\end{aligned}\]
We want to show that  the collection
\[\begin{aligned}
\{& \sum_{1 \le i \le j \le n} a_{ij}(w^i_{k}w^j_{l} + w^i_{l}w^j_{k})+ \sum_{1 \le i \le n} a_{i} w^i_{k}\}_{1 \le k  < l \le m} \bigcup\{ \sum_{1 \le i \le j \le n}  a_{ij}(s_iw^j_k+s_jw^i_k)\}_{1 \le k \le m} 
\\&\bigcup\{\sum_{1 \le i \le j \le n} a_{ij}s_is_j +\sum_{1 \le i \le n} a_{i} s_i\}
 \bigcup
\{ \sum_{1 \le i \le j \le n} a_{ij}w^i_{l}w^j_{l}\}_{1 \le l \le m}
\end{aligned}\]
is of rank $\ge r$. Namely, 
 we need to show that  if $B=(b_{kl}), (c_k), (d_k)$ is not $0$ then
\[\begin{aligned}
&\sum_{1 \le k < l \le m}  b_{kl} [\sum_{1 \le i \le j \le n} a_{ij}(w^i_{k}w^j_{l} + w^i_{l}w^j_{k})]
+\sum_{1 \le l \le m}  b_{ll} [\sum_{1 \le i \le j \le n} a_{ij}w^i_{l}w^j_{l} ]\\
&+ \sum_{1 \le k \le m} c_k [\sum_{1 \le i \le j \le n}  a_{ij}(s_iw^j_k+s_jw^i_k)]
+ \sum_{1 \le k \le m} d_k [\sum_{1 \le i \le j \le n} a_{ij}s_is_j +\sum_{1 \le i \le n} a_{i} s_i]
\end{aligned}\]
is or rank $\ge r$.  Suppose $b_{11} \ne 0$. Then we can write the above as
 \[
 b_{11}P(w_1) + l_{w_2, \ldots, w_m}(w_1)
\]
where $w_j=(w_j^1, \ldots, w_j^n)$, and $l_{w_2, \ldots, w_m}$ is linear in  $w_1$, so as a polynomial in $w_1$ this is of rank $\ge r$ and thus also of rank $\ge r$ as a polynomial in $w$.  Similarly in the case where $b_{ll} \ne 0$, for some $1 \le l \le m$.

Suppose $b_{12} \ne 0$.  We can write the above as
\[
(*) \quad  b_{12}Q(w_1, w_2) + l_{w_3, \ldots, w_m}(w_1, w_2)
\]
where $Q:V^2 \to k$, and $Q(t,t)=2P(t)$, and $l_{w_3, \ldots, w_m}:V^2 \to k$ is linear. Thus restricted to the subspace
in $W$ where $w_1=w_2$ we get that $(*)$ is of rank $\ge r$ and thus of rank $ \ge r$ on $W$.  Similarly if $b_{kl} \ne 0$ for some
$k< l$. Similar analysis for $c_k$ or $d_k$ not zero.  \\

For $d>2$ the argument is similar: We carry it out in the case $P$ is homogeneous; the non homogeneous case is similiar. We are given $P(t)=\sum_{I \in \mcI}a_{I}t_I$  of rank $r$, where $\mcI$ is the set of
ordered tuples $(i_1, \ldots, i_d)$ with  $1 \le i_1 \le \ldots \le i_d \le n$, and $t_I= t_{i_1} \ldots t_{i_d}$.

Note that  for any polynomial  $R(t)$ of degree $<d$  we have that $P(t)+R(t)$ is also of rank $>r$.

We can write
\[
P(w(x)) = \sum_{I \in \mcI} a_{I}w^I(x)
= \sum_{I \in \mcI} a_{I}\sum_{l_1, \ldots, l_d=1}^m w^{i_1}_{l_1} \ldots w^{i_d}_{l_d}x_{l_1} \ldots x_{l_d}
\]
For $1 \le l_1 \le \ldots \le l_d \le m$ the term  $x_{l_1} \ldots x_{l_d}$ has as coefficient
\[
 \sum_{I \in \mcI_{l_1, \ldots, l_d}} a_{I}w^{i_1}_{l_1} \ldots w^{i_d}_{l_d}
\]
where $\mcI_{l_1, \ldots, l_d}$ is the set of permutations of  $l_1, \ldots, l_d$.  We wish to show that the set of
the collection
\[
\{\sum_{I \in \mcI_{l_1, \ldots, l_d}} a_{I}w^{i_1}_{l_1} \ldots w^{i_d}_{l_d}\}_{ \mcI_{l_1, \ldots, l_d}}
\]
is of rank $\ge r$  Namely we need to show that  if $B=(b_{ \mcI_{l_1, \ldots, l_d}})$ is not $0$ then
\[
\sum_{ \mcI_{l_1, \ldots, l_d}} b_{ \mcI_{l_1, \ldots, l_d}} \sum_{I \in \mcI_{l_1, \ldots, l_d}} a_{I}w^{i_1}_{l_1} \ldots w^{i_d}_{l_d}
\]
is or rank $\ge r$.  Suppose $b_{ \mcI_{l_1, \ldots, l_d}} \ne 0$. Then restricted to the subspace
$w_{l_1} = \ldots = w_{l_d}$ we can write the above as
 \[
 b_{ \mcI_{l_1, \ldots, l_d}} |\mcI_{l_1, \ldots, l_d}|P(w_{l_1}) + R(w)
\]
where $w_j=(w_j^1, \ldots, w_j^n)$, and $R(w)$ is of lower degree in  $w_{l_1}$, so as a polynomial in $w_{l_1}$ this is of rank $\ge r$ and thus also of rank $ \ge r$ as a polynomial in $w$.
\end{proof}

\begin{lemma}\label{onto} There exists $r=r(d,m)$ such that for $P:k^n \to k$ is of rank $>r$ the map  $f:W\to k^{|\bL|}$  given by
$w\to \{ c_\bl (w)\}_{\bl \in \bL}$ is surjective.
\end{lemma}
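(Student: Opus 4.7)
The plan is to prove surjectivity by a Fourier/bias argument, exploiting the fact (established in the previous lemma) that the collection $\{c_\bl(w)\}_{\bl\in\bL}$ has Schmidt rank $\geq r$, meaning every nontrivial $k$-linear combination $\sum_\bl s_\bl c_\bl$ is a polynomial in $w$ of degree $\leq d$ and rank $\geq r$ (modulo a harmless additive constant from the affine shifts).

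First I would fix a nontrivial additive character $\psi:k\to \mC^\star$ and write, for any target $t=(t_\bl)_{\bl\in\bL}\in k^{|\bL|}$,
\[
|f^{-1}(t)|\;=\;\frac{1}{q^{|\bL|}}\sum_{\bs\in k^{|\bL|}} \psi\!\bigl(-\langle \bs,t\rangle\bigr)\sum_{w\in W}\psi\!\Bigl(\sum_{\bl\in\bL} s_\bl c_\bl(w)\Bigr).
\]
The $\bs=0$ term contributes the main term $|W|/q^{|\bL|}$. To control the sum over $\bs\neq 0$, I would invoke the standard bias-rank connection (Green--Tao, with the characteristic $>d$ refinements of Kaufman--Lovett and Bhowmick--Lovett): there exists a function $\gamma(r,d)\to\infty$ as $r\to\infty$ such that for any polynomial $Q$ of degree $\leq d$ on $W$ with rank $\geq r$,
\[
\Bigl|\sum_{w\in W}\psi(Q(w))\Bigr|\;\leq\; |W|\,q^{-\gamma(r,d)}.
\]

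Next, applying the previous lemma to each $\bs\neq 0$, the polynomial $Q_\bs:=\sum_\bl s_\bl c_\bl$ on $W$ has rank $\geq r$ (any additive constant from the affine shift is irrelevant to the bias). Summing the $q^{|\bL|}-1$ error terms gives
\[
\bigl| |f^{-1}(t)|-|W|/q^{|\bL|}\bigr|\;\leq\; |W|\,q^{-\gamma(r,d)}.
\]
Since $|\bL|$ depends only on $m$ and $d$, I would choose $r=r(d,m)$ large enough so that $\gamma(r,d)>|\bL|+1$. Then the main term strictly dominates the error, so $|f^{-1}(t)|>0$ for every $t$, proving surjectivity.

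The main obstacle is the bias-rank step: one needs a version of the Schmidt-rank vs.\ analytic-rank comparison that is valid in characteristic $>d$ with an effective function $\gamma(r,d)\to\infty$. Fortunately this is precisely the regime covered by the existing literature; combined with the multilinear rank calculation already carried out in the preceding lemma, no further combinatorial work is required beyond bookkeeping of the constants (in particular, checking that the small rank loss incurred by passing from the ambient $P$ to the coefficients $c_\bl$, and then to an arbitrary linear combination of them, is absorbed by increasing $r$ by a function of $m$ and $d$).
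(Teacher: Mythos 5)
Your proposal is correct and follows essentially the same route as the paper: both express the fiber count as an exponential sum over $k^{|\bL|}$, isolate the main term at the trivial character, and bound the nontrivial terms via the bias--rank proposition applied to the linear combinations $\sum_\bl s_\bl c_\bl(w)$, whose high rank is guaranteed by the preceding lemma. Your write-up is in fact slightly more careful about the choice of $r$ relative to $|\bL|$ than the paper's own two-line argument.
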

\begin{proof}
We need to solve the system of equations $ c_\bl (w) = b_{\bl}$ for any $b \in k^{\bL}$.  The number of solutions is given by
\[
q^{-|\bL|}\sum_{\alpha \in k^{\bL}} e_q(\sum_{\bl \in \bL} a_{\bl}(c_\bl (w) -b_{\bl})).
\]
Let $s=2|\bL| \le d^m$. By Proposition \ref{bias-rank-1} there exists $r=r(d,m)$ such that each term other that the term corresponding to $\alpha =0$ is
at most $q^{-s}$, so that the total contribution from these terms is at most  $q^{-|\bL|}$.
\end{proof}

This completes the proof of Proposition \ref{Jan1} and the first part of Theorem \ref{Jan}. We show in Appendix $C$ how to deduce the  second part of Theorem \ref{Jan} from the  second part of Proposition \ref{Jan1}.

\end{proof}


\section{Proof of Proposition \ref{ext}}\label{jan-extension}

A key tool in our  proof of this Proposition is a testing result from \cite{kz-uniform} which roughly says that any weakly polynomial function of degree $a$ that is "almost" weakly polynomial of degree $<a$, namely it is a polynomial of degree $<a$ on almost all affine subspaces, is weakly polynomial of degree $<a$.  This does not require high rank. We use high rank to show that almost any isotropic line is contained in an isotropic plane that not contained in  $l^{-1}\{0\}$. \\

\begin{proof}[Proof of Claim \ref{ext}]
Let $V$ be a vector space and $l:V\to k$ be a non-constant affine function. For any subset $I$ of $k$ we denote $\mW _I=\{ v \in \mV |l(v)\in I\}$ so that  $\mW _b =\mW _{\{ b\} }$, for $b\in k$.
For a hypersurface $\mX \in \mV$ we write $\mX _I=\mX \cap \mW_I$.

\begin{lemma}\label{l} For any  finite subset $S\subset k$, any a weakly polynomial function $f$ of degree $a$ on $X$ such that 
$f_{|X_S} \equiv 0$, and any $b\in k$ there exists  a polynomial $Q$ of degree $\leq a$ on $V$ such that 
$Q_{|X_S}\equiv 0$ and
 $(Q-f)_{|X_b} \equiv 0$.
\end{lemma}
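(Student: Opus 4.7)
My plan is to reduce the lemma to a single-fiber extension result by exploiting the vanishing of $f$ on $X_S$ to factor out a polynomial of degree $|S|$ in $l$, and then to prove the single-fiber extension by pulling back to the model hypersurface $X_n$ via Theorem \ref{Jan} and invoking Theorem \ref{const}.

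We may assume $b\notin S$, since otherwise $Q=0$ works. For the reduction, note that by Claim \ref{planes} (in the version where the two distinguished fibers are $b$ and any fixed $s\in S$, which follows from the stated version by an affine reparametrization of $l$), most $2$-planes $\Pi\subset X_b$ extend to a $3$-dimensional affine subspace $M\subset X$ on which $l$ is non-constant. Such an $M$ has $l|_M$ surjecting onto $k$, hence intersects $W_s$ in a hyperplane of $M$ for every $s\in k$; in particular $f|_M$, a polynomial of degree $\leq a$ on the affine space $M$, vanishes on $M\cap X_s$ for every $s\in S$, and is therefore divisible by $\prod_{s\in S}(l-s)$, giving $f|_M=\prod_{s\in S}(l-s)\cdot g_M$ with $g_M$ a polynomial of degree $\leq a-|S|$ on $M$. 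Restricting to $\Pi\subset M\cap X_b$ yields $f|_\Pi=\prod_{s\in S}(b-s)\cdot g_M|_\Pi$, of degree $\leq a-|S|$. The testing lemma of \cite{kz-uniform}, applied to $f|_{X_b}$, then upgrades polynomiality of degree $\leq a-|S|$ on most planes to weak polynomiality of degree $\leq a-|S|$ on $X_b$. Setting $g := f|_{X_b}/\prod_{s\in S}(b-s)$, which is now weakly polynomial of degree $\leq a-|S|$ on $X_b$, the task reduces to finding a polynomial extension $h$ of $g$ to a polynomial of degree $\leq a-|S|$ on $V$; then $Q := \prod_{s\in S}(l-s)\cdot h$ has degree $\leq a$, vanishes on $W_S\supset X_S$, and satisfies $Q|_{X_b}=f|_{X_b}$.

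For the extension of $g$ to $V$, the restricted polynomial $P|_{W_b}$ retains rank $\geq r(X)-O(1)$ under affine-hyperplane restriction, so by Theorem \ref{Jan} there is an affine map $\phi:V_n\to W_b$ with $\phi^\star(P|_{W_b})=P_n$, i.e., $\phi^{-1}(X_b)=X_n$. The pullback $g\circ\phi$ is weakly polynomial of degree $\leq a-|S|$ on $X_n$, and by Theorem \ref{const} extends to a polynomial $\tilde{h}$ of degree $\leq a-|S|$ on $V_n$. A descent argument, varying $\phi$ within the flexible family guaranteed by Theorem \ref{Jan} and using the uniqueness modulo $P|_{W_b}$ of polynomial extensions of weakly polynomial functions on $X_n$, shows that these pointwise extensions patch into a polynomial $h_0$ of degree $\leq a-|S|$ on $W_b$; composing $h_0$ with any linear projection $V\to W_b$ yields the required $h$.

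The main obstacle is the reduction step, which depends on two genericity inputs: Claim \ref{planes} (in its version for arbitrary pairs of fibers) must guarantee that almost every $2$-plane in $X_b$ extends to a $3$-dimensional affine subspace of $X$ on which $l$ is non-constant, and the testing lemma of \cite{kz-uniform} must upgrade \emph{polynomiality of degree $\leq a-|S|$ on most planes} to \emph{weak polynomiality of degree $\leq a-|S|$ on $X_b$}. A secondary obstacle is the descent step in single-fiber extension: one must verify the coherence, modulo $P|_{W_b}$, of polynomial extensions produced by different choices of $\phi$, which is where the flexibility supplied by Theorem \ref{Jan} is used critically.
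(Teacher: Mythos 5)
Your first step --- using Proposition \ref{line-plane} to place almost every plane $\Pi\subset X_b$ inside a $3$-dimensional $M\subset X$ on which $l$ is non-constant, factoring $f|_M=\prod_{s\in S}(l-s)\cdot g_M$, and then invoking the testing result of \cite{kz-uniform} to conclude that $f|_{X_b}$ is weakly polynomial of degree $\leq a-|S|$ --- is exactly the paper's argument, and it is correct. The divergence, and the problem, is in how you then extend this degree-$(a-|S|)$ weakly polynomial function on $X_b$ to a genuine polynomial on $W_b$ (equivalently on $V$).

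The paper does this by appealing to the inductive hypothesis on $a$: Lemma \ref{l} is proved inside an induction on the degree, and since $a-|S|<a$ and $X_b\subset W_b$ is still of high rank (Lemma \ref{subspace-rank}), the extension statement for degree $a-|S|$ is available by induction. You instead attempt a direct construction: pull back along affine maps $\phi:V_n\to W_b$ supplied by Theorem \ref{Jan}, extend on each copy of $V_n$ via Theorem \ref{const}, and then ``patch'' by a descent argument. This descent is a genuine gap, not a routine verification. Each $\phi(V_n)$ is a proper affine subspace of $W_b$ of bounded dimension, and what you get is a family of polynomials, one on each such subspace, agreeing with $g$ on $X_b\cap\phi(V_n)$. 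There is no a priori reason these glue to a single polynomial on $W_b$: ``uniqueness modulo $P|_{W_b}$'' of extensions from $X_n$ controls the extension only on the subspace $\phi(V_n)$, and says nothing about compatibility of two extensions defined on two different subspaces off their (typically small or empty) intersection. In the paper, passing from an extension on one such subspace to an extension on all of $W_b$ is precisely the content of Corollary \ref{extension1}, which is proved by climbing a flag one dimension at a time using Proposition \ref{ext} --- which in turn rests on Lemma \ref{l}. So making your descent rigorous would require exactly the statement you are in the middle of proving (for degree $a-|S|$), i.e., your argument is circular unless you explicitly organize it as an induction on the degree, which is what the paper does. The fix is simple: replace the descent paragraph by the observation that the extension of a weakly polynomial function of degree $a-|S|<a$ from the high-rank hypersurface $X_b$ is the inductive hypothesis.
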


\begin{proof}
We start with the following result.

\begin{claim}Under the assumptions of Lemma \ref{l} the restriction $f_{|X_b}$ is a weakly polynomial function of degree $\leq a-|S|$.
\end{claim} 

\begin{proof} Since $|k|>a$ it suffices to show that  for any plane $L\subset X_b$ the restriction $f_{|L}$ is a polynomial of degree  $\leq a-|S|$.

As follows from Proposition \ref{testing-lines}, there is a constant $A= A(d,a)$ such that it suffices to check the restriction $f_L$  on $q^{-A}$-almost any affine plane $L\subset X_b$ is a polynomial of degree $\leq a-1$.

As follows from Proposition \ref{line-plane} for any $s>0$ there is an $r=r(d,s)$  such that if $X$ is of rank $>r$ then for $q^{-s}$-almost any  affine plane $L\subset X_b$ there exists  an affine $3$-dim subspace  $M\subset X$ containing $L$ and such that $M\cap W_0\neq \emp$.  Then $M\cap X_t\neq \emp$ for any $t \in k$. Since $f$ is a weakly polynomial function of degree $a$ its restriction to $M$ is a polynomial $R$ of degree $\leq a$. Since the restriction of $R$ to  $l^{-1}(S) \cap M \equiv 0$ we see that $R=R'\prod _{s\in S}(l-s).$ 
Since $l{|L}\equiv b$ we see that the restriction $f_L$  is equal to $R'$ which is a 
is a polynomial of degree $\leq a-|S|$.
\end{proof}
Now we show that this Claim implies Lemma \ref{l}. Indeed, assume that $f_{|X_b}$ is a weakly polynomial function of degree $\le a-|S|$.  It follows from the inductive assumption  on $a$ that there exists  a polynomial $Q'$ of degree $\leq a-|S|$ on $V$ such that $f_{|X_b}=Q'_{|X_b}.$  Let $Q:=\frac {Q'\prod _{s\in S}(l-s)}{\prod _{s\in S}(b-s) }$. Then $(f-Q)_{X_{S \cup \{b\}}}\equiv 0.$
\end{proof}
This completes the proof the Claim \ref{ext}.
\end{proof}
\ \\

To formulate a useful variant of Corollary \ref{extension1} we introduce some notations. Recall the definitions of 
$\mcP _a^w(X)$ , $\mcP _a(X)$ from Definition \ref{X} 
\begin{definition}\label{weak} 
\begin{enumerate}
\item  We denote by $\overline \mcP _a(X)$ the quotient $\mcP _a^w(X) /\mcP _a(X)$.
\item For an imbedding $\phi :V'\ho V$ we define $X_\phi :=\{ v'\in V'|\phi (v')\in X\}$. The map $\phi$ defines an imbedding $X_\phi \to X$ which we also denote by $\phi$.
\item We denote by  $\phi ^\star : k[X] \to k[X_\phi] $ the map $f\to f\circ \phi$.
\item We denote by $$\bar \phi : \overline \mcP _a(X) \to \overline \mcP _a(X_{\phi}) $$ the linear map induced by $\phi^\star$.
\end{enumerate}
\end{definition}

\begin{corollary} \label{extension2} 
 Let $a>0$, and let $k$ be a field with $a<q$. There exists $\rho =\rho (d,a)$ such that 
for any hypersurface $X= \{v\in V|P(v)=0\}\subset V$ of degree  $\leq d$, any affine imbedding 
$\phi :W\ho V$ such that rank of $P_{|W}\geq \rho$ the restriction map 
$\bar \phi : \overline \mcP _a(X) \to \overline \mcP _a(X\cap W) $ is an imbedding.
\end{corollary}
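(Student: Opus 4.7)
The plan is to derive Corollary \ref{extension2} directly from Corollary \ref{extension1}, taking $\rho(d,a) := r(d,a)$ with $r(d,a)$ the constant supplied by Corollary \ref{extension1}. The statement is essentially a functorial repackaging of the extension principle, so the main task is bookkeeping: checking that the map $\bar \phi$ is well-defined, and then reading off injectivity as a direct translation of Corollary \ref{extension1}.

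First I would verify that $\phi^\star : k[X]\to k[X_\phi]$ descends to a well-defined linear map $\bar \phi : \overline \mcP _a(X)\to \overline \mcP _a(X_\phi)$. Concretely one checks that $\phi^\star$ carries $\mcP _a^w(X)$ into $\mcP _a^w(X_\phi)$ and $\mcP _a(X)$ into $\mcP _a(X_\phi)$. Both inclusions are immediate from the definitions: any affine subspace $L\subset X_\phi$ is mapped by $\phi$ to an affine subspace $\phi(L)\subset X\cap \phi(W)\subset X$, so $(\phi^\star f)_{|L}=f_{|\phi(L)}\circ \phi|_L$ is a polynomial of degree $\leq a$ whenever $f\in \mcP _a^w(X)$; and if $f = F_{|X}$ for some polynomial $F$ of degree $\leq a$ on $V$, then $\phi^\star f = (F\circ \phi)_{|X_\phi}$ with $F\circ \phi$ of degree $\leq a$ on $W$. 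Hence $\bar \phi$ is well-defined.

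The second step, which carries the real content, is injectivity of $\bar \phi$. Suppose $[f] \in \ker \bar \phi$ for some $f\in \mcP _a^w(X)$. Then $\phi^\star f \in \mcP _a(X_\phi)$, meaning there is a polynomial $G$ of degree $\leq a$ on $W$ with $G_{|X_\phi}=\phi^\star f$. Identifying $W$ with the affine subspace $\phi(W)\subset V$ via $\phi$, this says precisely that $f_{|X\cap \phi(W)}$ extends to a polynomial on $\phi(W)$ of degree $\leq a$. Since by hypothesis $r(P_{|W})\geq \rho =r(d,a)$, Corollary \ref{extension1} applies (with the affine subspace $\phi(W)$ in place of $\mW$) and produces an extension $F$ of $f$ to a polynomial on $V$ of degree $\leq a$. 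Therefore $[f]=0$ in $\overline \mcP _a(X)$, proving injectivity.

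There is no genuine obstacle here beyond the check that the rank $r(P_{|W})$ is invariant under the affine isomorphism $\phi : W \to \phi(W)$, which is immediate from the definition of $r$ since pullback by an affine isomorphism preserves degree and products. The utility of this reformulation is that it presents the extension property as an injectivity statement for a natural restriction map between the obstruction spaces $\overline \mcP _a$, a form convenient for combining with Theorems \ref{const} and \ref{Jan} in the proof of Theorem \ref{main}.
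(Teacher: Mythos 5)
Your proposal is correct and is essentially the argument the paper intends: the paper states Corollary \ref{extension2} without proof as an immediate reformulation of Corollary \ref{extension1}, and your derivation (well-definedness of $\bar\phi$ from the definitions, then injectivity by applying Corollary \ref{extension1} to a representative $f$ whose restriction to $X\cap\phi(W)$ extends polynomially) is exactly that reformulation.
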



\section{Complete intersections of bounded codimension}\label{subvariety}
The arguments in the paper are written in the case when $\mX$ is hypersurface. Most of our results extend easily to the case when  $\mX$ is a complete intersection of bounded codimension.

\begin{definition}[Algebraic rank of a variety]\label{def-rank-variety} 
\begin{enumerate}
\item Given a collection $\bar P \in  \mcP _{\bar d}(\mV) $  of polynomials we define 
$\mX _{\bar P}=\{ v\in \mV|P_s(v)=0, 1\leq s\leq c\}$.
\item A collection $\bar P=\{P_i\}$ of polynomials is {\it admissible} if 
$\mX _{\bar P}\subset \mV $ is a complete intersection (that is $\dim(\mX _{\bar P})=\dim(\mV)-c$).
\item In the case when all polynomials $P_i$ are of the same degree we define 
$r(\bar P)$ as  the minimal rank of a non trivial $k$-linear combination of $P_s$.
\item  If $\bar P=\bigcup _j\bar P_j$ where $P_j$ is a collection of polynomials of degree $d_j$ we 
say that $r(X_{\bar P})>r$ if  for all $j$, $r(\bar P_j)>r$.
\end{enumerate}
\end{definition}

Let $P_{n,d}: \mV _n\to \mA$ be polynomials as in Theorem \ref{const} and $\mX _{n,d}=\mX _{P_{n,d}}$. For any $\bar d=(d_1,\dots ,d_c)$ we define $\bar P_{n,\bar d}\in \mcP _{\bar d}(\mV ^c)=\{ P_{l,d_l\circ p_i}\}$, $1\leq l\leq c$ where $p_l:\mV _n^c\to \mV$ is the projection on the $l$-component. It is clear that $\mX _{\bar P}=\prod _{l=1}^c\mX _{n,d_l}$.

\begin{theorem}\label{co}
\begin{enumerate}
\item $r(\bar P_{n,\bar d} )\geq n/d$, where $d:=\max _ld_l$.
\item For any admissible field $k$,  the complete intersection   $\mX _{n,\bar d}\subset \mV _n^c$ has the property  $\star _a$.
\end{enumerate}
\end{theorem}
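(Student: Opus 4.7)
The strategy is to extend the two-step proof of Theorem \ref{const} (singular-locus rank bound, then symmetry/character-decomposition argument) to the product situation $\mX_{n,\bar d} = \prod_{l=1}^c \mX_{n,d_l}$. The product structure on $\mV_n^c = \prod_l \mW_{d_l}^n$ is precisely what makes the argument transparent: the symmetry group, torus, and linear subspace used in the hypersurface case all split as products over $l$.

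For part (1), I would group the components by common degree: for each value $d'$ set $S_{d'} = \{l : d_l = d'\}$ and consider an arbitrary nonzero $k$-linear combination $Q = \sum_{l \in S_{d'}} c_l\, (P_{n,d'}\circ p_l)$. Let $I := \{l \in S_{d'} : c_l \neq 0\}$; since the variables in different $p_l$-factors are disjoint, the partial derivatives of $Q$ decouple. As in the proof of Theorem \ref{const}(1), a point $v = (w^{(l)})_l$ lies in the singular locus of $\mX_Q$ exactly when $w_i^{(l)} \in \mW'_{d'}$ for every $i$ and every $l \in I$, while the remaining coordinates are free. Since $\mu(w_i^{(l)})=0$ on $\mW'_{d'}$, this whole locus is contained in $\mX_Q$, so its codimension in $\mX_Q$ is (up to a $-1$) equal to $2n|I|\geq 2n$. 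Lemma \ref{cod} then gives $r(Q) \geq n/d' \geq n/d$, which is precisely $r(\bar P_{n,\bar d})\geq n/d$ under the convention in Definition \ref{def-rank-variety}.

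For part (2), I would run the argument of Theorem \ref{equality} in parallel over the $c$ factors. Define $\Gamma := \prod_l (S_{d_l})^n$ acting on $\mV_n^c$; the product linear subspace $L := \prod_l L^{(l)}$ with $L^{(l)}:=\{(c_i)\in k^n:\sum_i c_i=0\}$; the map $\kk : L \hookrightarrow X$ as a product of the per-factor maps; the finite torus $T := \prod_l T^{(l)}$ with character group $\Theta := \prod_l \Theta^{(l)}$; and the admissibility condition $\Theta^{adm}:=\prod_l \Theta^{(l),adm}$. For any $f \in \mcP^w_a(X)$, the functions $h_{\Gg,f} = f \circ \kk_\Gg$ remain polynomials of degree $\leq a$ on the linear space $L$, so one may decompose $\mcP^{\bar w}_a(X)=\bigoplus_\theta \mcP^{\bar w}_a(X)^\theta$. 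The proof of Lemma \ref{zero} carries over componentwise: if some $\alpha^{(l)}_{j,j'}(\chi_i) > a$ then applying the $l$-th factor's transposition kills $h$, forcing $\mcP^{\bar w}_a(X)^\theta = 0$ for $\theta \notin \Theta^{adm}$. For $\theta \in \Theta^{adm,+}$, one writes down the analogous explicit extension
$$P(v) = \prod_{l=1}^c \prod_{i=1}^n \prod_{j=1}^{d_l} (x^{(l),j}_i)^{\alpha^{(l),1,j}(\chi^{(l)}_i)} \cdot F_h(\nu(v)),$$
where $\nu$ is the product of the per-factor monomial maps and $F_h$ is a polynomial extension of $h$ on $k^{cn}$ restricted to $L$. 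The degree bound $\deg P \leq ad$ is verified exactly as before, summand by summand. Finally the vanishing of $\bar f := f - P$ is established by repeating the inductive cascade (Lemma \ref{van} and the $W_1,W_2,\ldots$ argument) applied in each factor separately, invoking Claim \ref{many} over the product.

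The main obstacle is purely bookkeeping: the cascade at the end of the proof of Theorem \ref{equality} uses that a polynomial of degree $\leq ad$ on a linear subspace vanishing on a sufficiently rich subset of $\Delta$-points is identically zero, and in the product setting one must iterate this factor-by-factor while keeping the character constraints $\theta\circ\Gg \in \Theta^{adm,+}$ consistent across the factors. Once the decomposition and admissibility are set up componentwise, however, no new ideas are needed; the second half of Proposition \ref{Id} (reduction from degree $\leq ad$ to degree $\leq a$) is purely local to each factor and goes through verbatim via Claim \ref{we}.
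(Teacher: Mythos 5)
Your proposal is correct and follows essentially the same route as the paper, which itself only sketches the proof of Theorem \ref{co} as "completely analogous" to Theorem \ref{const}(1) for the rank bound and "parallel" to Theorem \ref{const}(2) using the subspace $L^c$ and the torus $T^c$ acting on $k[\mX_{n,\bar d}(k)]$. Your write-up supplies exactly the componentwise bookkeeping (grouping by degree for the linear combinations in part (1), and the product decomposition of $\GG$, $T$, $\Theta$ in part (2)) that the paper leaves implicit.
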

\begin{proof}The proof of the  part $(1)$ is completely analogous to the 
proof of the  part $(1)$ of Theorem \ref{const}.
The proof of the second part is also parallel to proof of the  part $(2)$ of Theorem \ref{const} which is based on 
on the analysis of the restriction of weakly polynomial functions to the subspace $L\subset \mX _n$
and the decomposition of the space $k[\mX _n(k)]$ under the action of the torus $T$. In the proof of Theorem \ref{co} we use the restriction weakly polynomial functions to the subspace $L^c\subset \mX _{n,\bar d}$ and the action of the torus $T^c$ on the space $k[\mX _{n,\bar d}(k)]$. 
\end{proof}

One checks that the formulations and proofs of Claim \ref{planes} and of Proposition \ref{ext} are naturally extended to complete intersections of bounded codimension.

The following is the counterpart of Proposition \ref{Jan1}.
We can consider $\mcP _{\bar d}$ as a contravariant autofunctor on the category Vect$_{af}$ of finite-dimensional $k$-affine vector spaces where to an affine map $f:W\to V$ we associate the map 
$$f^\star :\mcP _{\bar d}(V) \to \mcP _{\bar d}(W), \quad  f^\star (\{P_s\})= \{P_s \circ f\}.$$

\begin{proposition}\label{Jan2}For any $m\geq 1$ There exists $r=r(\bar d,m)$ such that for any $\bar P \in \mcP _{\bar d}(V)$, $\bar Q\in \mcP _{\bar d}(U)$, with $r(\bar P)\geq r$, and dim$(U)\leq m  $ there exists $f\in \text{Hom}_{\text{Vect}_{af}}(U,V)$ such that $\bar Q=f^\star (\bar P)$.
\end{proposition}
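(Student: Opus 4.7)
The proof is a direct generalization of the proof of Proposition \ref{Jan1}. First reduce to the case of a finite field $k=\mF_q$ of characteristic $>d:=\max_s d_s$; the algebraically closed case then follows by the transfer argument of Appendix~$C$, exactly as for Theorem \ref{Jan} itself. Fix an affine basis of $U$ and parametrize an affine map $f:U\to V$ by $w=(w_0,w_1,\ldots,w_m)\in V^{m+1}$ via $f(x)=w_0+\sum_{i=1}^{m}w_i x_i$. For each $s$, expand
$$P_s(f(x))=\sum_{\lambda\in\bL_{d_s}}c^{s}_{\lambda}(w)\,x^{\lambda},$$
where $\bL_{d_s}$ is the set of multi-indices in $x_1,\ldots,x_m$ of total degree at most $d_s$, and $c^{s}_{\lambda}(w)$ is a polynomial in $w$ of degree at most $d_s$. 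Writing $Q_s=\sum_{\lambda}b^{s}_{\lambda}x^{\lambda}$, the equation $f^{\star}(\bar P)=\bar Q$ becomes the system $c^{s}_{\lambda}(w)=b^{s}_{\lambda}$ for all admissible $s,\lambda$; setting $N=\sum_s|\bL_{d_s}|$, it suffices to show that the combined coefficient map $\kappa:V^{m+1}\to k^{N}$, $w\mapsto (c^{s}_{\lambda}(w))_{s,\lambda}$, is surjective on $k$-points.

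The technical heart is a rank-preservation lemma: for every non-trivial linear combination $G(w)=\sum_{s,\lambda}\alpha^{s}_{\lambda}\,c^{s}_{\lambda}(w)$ the rank of $G$ as a polynomial in $w$ is at least $r(\bar P)$. The proof is the diagonal-restriction trick from Proposition \ref{Jan1}. Decompose $G=\sum_{d'}G_{d'}$ where $G_{d'}$ collects those $c^{s}_{\lambda}$ with $d_s=d'$; pick the largest $d'$ for which $G_{d'}\neq 0$, and within $G_{d'}$ pick some $\lambda^{*}$ with $|\lambda^{*}|=d'$ and $\alpha^{s}_{\lambda^{*}}\neq 0$ for some $s$. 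Restrict $w$ to the linear subspace cut out by $w_0=0$, $w_j=0$ for $j\notin\mathrm{supp}(\lambda^{*})$, and $w_j=u$ for $j\in\mathrm{supp}(\lambda^{*})$, parametrized by a single $u\in V$. The restriction of $G$ then equals, up to a non-zero multinomial constant (non-zero because $\mathrm{char}(k)>d'$), a non-trivial $k$-linear combination of the top-degree parts of those $P_s$ with $d_s=d'$, plus lower-order terms in $u$. By Definition \ref{def-rank-variety} this has rank $\ge r(\bar P)$, and hence so does $G$. If no top-degree coefficient is non-zero for the chosen $d'$, one iterates by peeling off the next-lower $|\lambda|$, using that $w_0$ parametrizes lower-order contributions in the expansion of $P_s(f(x))$.

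Given the rank-preservation lemma, the count of solutions follows verbatim as in the proof of Lemma \ref{onto}: the number of $w\in V^{m+1}$ with $\kappa(w)=(b^{s}_{\lambda})$ equals
$$q^{-N}\sum_{\alpha\in k^{N}}e_q\!\left(\sum_{s,\lambda}\alpha^{s}_{\lambda}\bigl(c^{s}_{\lambda}(w)-b^{s}_{\lambda}\bigr)\right).$$
The term $\alpha=0$ contributes the main term $q^{-N}\cdot|V|^{m+1}$, and for each non-zero $\alpha$ the rank-preservation lemma together with Proposition \ref{bias-rank-1} bounds the Gauss sum by $q^{-2N}\cdot|V|^{m+1}$, provided $r(\bar P)\ge r(\bar d,m)$ is taken large enough. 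Summing over the $q^{N}-1$ non-trivial $\alpha$ yields a total error strictly smaller than the main term, so $\kappa$ is surjective. The principal obstacle is just the book-keeping in the rank-preservation lemma — one must carefully track how the mixed-degree structure of $\bar P$ interacts with the $w$-degree filtration of the coefficients $c^{s}_{\lambda}$, and peel off top-degree and lower-degree contributions one at a time — but no new conceptual ingredients beyond those in Proposition \ref{Jan1} are needed.
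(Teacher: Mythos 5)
Your proposal follows essentially the same route as the paper's proof: expand $P_s(f(x))$ into coefficient polynomials $c^s_\lambda(w)$, prove a rank-preservation lemma for non-trivial linear combinations by restricting to a diagonal subspace $w_{l_1}=\cdots=w_{l_{d_s}}$ so that the top-degree block of the combination becomes a non-trivial $k$-linear combination of the $P_s$ of that degree plus lower-order terms, and then conclude surjectivity by the exponential-sum count of Lemma \ref{onto} via Proposition \ref{bias-rank-1}. The paper organizes the degree bookkeeping by partitioning $[1,c]=\bigcup_f C_f$ with $C_f=\{s: d_s=f\}$, which is exactly your decomposition $G=\sum_{d'}G_{d'}$, so the two arguments coincide in substance.
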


\begin{proof}
Consider the polynomials $R_s, \ 1 \le s \le c$ on $W\times k^{m}$ defined by

$$R_s(w,x)=P_s(w(x))=\sum _{\bl \in \bL_s}c^s_\bl (w)x^{\bl}, $$
where $\bL_s$ is the set of ordered tuples $(j_1, \ldots, j_m)$ with $j_i \ge 0$ and  $\sum_{i=1}^m j_i \le d_s$.

\begin{lemma}\label{C} If $p>d$ and   $rank(\bar P)>r$ then $\{c^s_{\lambda}(w)\}_{1 \le s \le L,\bl \in \bL}$ is of rank $>r$.
\end{lemma}

\begin{proof}
We are given $P_s(t)=\sum_{I \in \mcI_s}a^s_{I}t_I$, $1 \le s \le c$,  of rank $r$, where $\mcI_s$ is the set of
ordered tuples $(i_1, \ldots, i_{d_s})$ with  $1 \le i_1 \le \ldots \le i_{d_s} \le n$, and $t_I= t_{i_1} \ldots t_{i_{d_s}}$.

Note that  for any polynomials $l_s(t)$ of degrees $<d_s$  we have that $\{P_s(t)+l_s(t)\}$ is also of rank $>r$.

We can write
\[
P_s(w(x)) = \sum_{I \in \mcI_s} a^s_{I}w^I(x)
= \sum_{I \in \mcI} a^s_{I}\sum_{l_1, \ldots, l_{d_s}=1}^m w^{i_1}_{l_1} \ldots w^{i_{d_s}}_{l_{d_s}d}x_{l_1} \ldots x_{l_{d_s}}
\]
For $1 \le l_1 \le \ldots \le l_{d_s} \le m$ the term  $x_{l_1} \ldots x_{l_{d_s}}$ has as coefficient
\[
 \sum_{I \in \mcI_{l_1, \ldots, l_{d_s}}} a^s_{I}w^{i_1}_{l_1} \ldots w^{i_{d_s}}_{l_{d_s}},
\]
where $\mcI_{l_1, \ldots, l_{d_s}}$ is the set of permutations of  $l_1, \ldots, l_{d_s}$.  We wish to show that 
the collection
\[
\{\sum_{I \in \mcI_{l_1, \ldots, l_{d_s}}} a^s_{I}w^{i_1}_{l_1} \ldots w^{i_{d_s}}_{l_{d_s}}\}_{ 1 \le s \le c, \mcI_{l_1, \ldots, l_{d_s}}}
\]
is of rank $>r$.  Write $[1,c]=\bigcup_{f=2}^d C_f$ where $C_f=\{s: d_s=f\}$. 

We need to show that for any $f=2, \ldots, c$ if $B=(b_{ \mcI_{l_1, \ldots, l_{d_s}}})_{s \in C_f,   \mcI_{l_1, \ldots, l_{d_s}}}$ is not $0$, then
\[
\sum_{s\in C_f} \sum_{ \mcI_{l_1, \ldots, l_{d_s}}} b_{ \mcI_{l_1, \ldots, l_{d_s}}} \sum_{I \in \mcI_{l_1, \ldots, l_{d_s}}} a^s_{I}w^{i_1}_{l_1} \ldots w^{i_{d_s}}_{l_{d_s}}
\]
is or rank $>r$.  Suppose $(b_{ \mcI_{l_1, \ldots, l_{d_s}}})_{s\in C_f} \ne \bar 0$. Then restricted to the subspace
$w_{l_1} = \ldots = w_{l_{d_s}}$ we can write the above as
 \[
\sum_{s \in C_f} b_{ \mcI_{l_1, \ldots, l_{d_s}}} |\mcI_{l_1, \ldots, l_{d_s}}|P_s(w_{l_1}) + R(w)
\]
where $w_j=(w_j^1, \ldots, w_j^n)$, and $R(w)$ is of lower degree in  $w_{l_1}$, so as a polynomial in $w_{l_1}$ this is of rank $>r$ and thus also of rank $>r$ as a polynomial in $w$.
\end{proof}

\begin{lemma} There exists $r=r(d,c,m)$ such that for $\bar P:k^n \to k^c$ is of rank $>r$ the map  $f:W\to \prod_s k^{|\bL_s|}$  given by $w\to \{ c_\bl (w)\}_{s, \bl \in \bL_s}$ is surjective.
\end{lemma}

\begin{proof}
Same proof as the corresponding Lemma \ref{onto} for hypersurfaces. 
\end{proof}

This completes the proof of Lemma \ref{C} and therefore Proposition \ref{Jan2}.
\end{proof}

Finally, Proposition \ref{ext} for complete intersections is proved in the exact same way.

  
\appendix

\section{Counting tools}
The main property of high rank varieties is that it is easy to estimate the number of points on various important varieties. The main counting tool comes from the relation between the bias of exponential sums and algebraic rank.

Let $k$ be a finite field, char$(k)=p$, $|k|=q$. Let  $V$ a vector space over $k$. We denote $e_q(x) = e^{2 \pi i \psi(x)/p}$ where $\psi:k \to \mathbb F_p$ is the trace function. 
Let $P :V \to k$ be a polynomial of degree $d$. We denote by $(h_1, \ldots, h_d)_P$ the multilinear form
 \[
(h_1, \ldots, h_d)_P=\sum_{\bo \in \{0,1\}^d} -1^{|\bo|}P(x+\omega \cdot \bar h); \qquad |\bo| = \sum_{i=1^d} \bo_i.
\]

We denote by $\mE_{x \in S} f(x)$ the average  $|S|^{-1}\sum_{x \in S} f(x)$. 

\begin{definition}[Gowers norms \cite{gowers}]\label{uniform} For a function $g: V \to \mC$ we define the norm $\|g\|_{U_d} $ by \[\|g\|^{2^d}_{U_d} = \mE_{x,v_1, \ldots v_d\in V} \prod_{\omega \in \{0,1\}^d} g^{\omega}(x+\omega \cdot \bar v),\] where $g^{\omega}=g$  if $|\omega|$ is even and $g^{\bo}=\bar g$ otherwise.
 \end{definition}

\begin{definition}[Analytic rank] The analytic rank of a polynomial $P:V \to k$ of degree $d$ is defined by 
arank$(P)=-\log_q \|e_q(P)\|_{U_d}$. 
 \end{definition}
 
The following Proposition relating bias and rank was proved in increasing generality in \cite{gt1,kl,bl}.
 The most general version can be found at the survey \cite{hhl} (Theorem 8.0.1):
 
 \begin{proposition}[Bias-rank]\label{bias-rank-1}
 Let $s,d>0$. There exists $r=r(s,k,d)$ such that for any finite field $k$ of size $q=p^l$, any vector space $V$ over $k$, any polynomial  $P:V \to k$ of degree $d$. If $P$ is of rank $>r$ then
\[
 |\mE_{v \in V}e_q(P(v))| < q^{-s}.
\]
In the case when $p>d$, the bound on  $r$ is uniform in $k$.
\end{proposition}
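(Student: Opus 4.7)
The plan is to prove the contrapositive: if $|\mE_v e_q(P(v))| \geq q^{-s}$, then the algebraic rank $r(P)$ is bounded by some $r_0(s,d)$ (or $r_0(s,d,q)$ in the general characteristic case). The natural route goes through the Gowers $U_d$ norm of $e_q(P)$, via the standard inequality
\[
|\mE_v \phi(v)|^{2^d} \leq \|\phi\|_{U_d}^{2^d},
\]
obtained by iterating Cauchy--Schwarz, using $\|\phi\|_{U_1} = |\mE \phi|$ and monotonicity of Gowers norms. Expanding the right-hand side yields
\[
\|e_q(P)\|_{U_d}^{2^d} = \mE_{h_1,\ldots,h_d, x}\; e_q\bigl(\Delta_{h_1}\cdots\Delta_{h_d} P(x)\bigr),
\]
where $\Delta_h P(x) := P(x+h)-P(x)$. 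Since $\deg P \leq d$, the $d$-fold derivative $M(h_1,\ldots,h_d) := \Delta_{h_1}\cdots\Delta_{h_d} P(x)$ is independent of $x$ and is a symmetric multilinear form in $h_1,\ldots,h_d$. Hence a large bias of $P$ forces a large bias for the multilinear form $M$.

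I would then run an induction on $d$. The base case $d=1$ is immediate since a nonconstant affine function has zero bias. For the inductive step, from $|\mE_v e_q(P(v))|^2 = \mE_h \mE_x e_q(-\Delta_h P(x))$ one concludes that there is a set $H \subset V$ of density at least $\tfrac{1}{2}q^{-2s}$ on which the polynomial $\Delta_h P$, of degree $d-1$, has bias at least $\tfrac{1}{2}q^{-2s}$. By the inductive hypothesis each such $\Delta_h P$ admits a decomposition into $O_d(1)$ products of polynomials of strictly smaller degree. The core task becomes assembling these pointwise decompositions for $h \in H$ into a single bounded-rank decomposition of $P$ itself.

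This assembly is the main obstacle, and where essentially all of the work lies. The cleanest route is to pass through the symmetric multilinear form $M$ obtained above and prove there the equivalence between small analytic rank and small \emph{partition rank} for multilinear forms over finite fields; this is the inductive slicing and regularization argument that appears in increasing generality in the work of Green--Tao, Kaufman--Lovett, and Bhowmick--Lovett. Once a partition-rank decomposition of $M$ is in hand, it needs to be translated back into an algebraic-rank decomposition of $P$. When $p > d$, the factor $d!$ is invertible and the polarization identity $P_{\mathrm{top}}(x) = \tfrac{1}{d!} M(x,\ldots,x)$ recovers the top-degree part of $P$ from $M$; a partition-rank decomposition of $M$ then yields an algebraic-rank decomposition of the homogeneous part of $P$ with constants depending only on $d$, and lower-degree parts are handled by downward induction on $d$. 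This gives a bound uniform in $k$. In characteristic $p \leq d$ the polarization identity breaks down, and the translation must be carried out directly on the original polynomial using combinatorial arguments whose bounds depend on $q$, which is precisely why uniformity in $k$ is only asserted under the hypothesis $p > d$.
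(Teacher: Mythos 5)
The first thing to say is that the paper does not prove this proposition at all: it is imported as a black box from the literature (Green--Tao, Kaufman--Lovett, Bhowmick--Lovett, with the general statement attributed to Theorem 8.0.1 of the Hatami--Hatami--Lovett survey). So there is no in-paper argument to compare yours against; the only meaningful question is whether your proposal stands on its own as a proof.

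As a standalone proof it has a gap, and you have located it yourself: the step you call ``the main obstacle, and where essentially all of the work lies'' is not carried out but deferred to exactly the same references the paper cites. Everything you do prove --- the monotonicity bound $|\mE_v \phi(v)|^{2^d}\le \|\phi\|_{U_d}^{2^d}$, the identification of the $d$-fold derivative with an $x$-independent symmetric multilinear form, the Cauchy--Schwarz step producing a dense set $H$ of directions $h$ with $\Delta_h P$ biased, and the polarization $P_{\mathrm{top}}(x)=\tfrac{1}{d!}M(x,\dots,x)$ explaining why $p>d$ gives bounds uniform in $k$ --- is correct but is the routine part. The theorem's entire content is the converse direction: that large bias (equivalently, large $\|e_q(P)\|_{U_d}$, equivalently small analytic rank of $M$) forces a decomposition $P=\sum_{i\le r_0}Q_iR_i$ with $r_0$ bounded. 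Your sketch names two possible engines for this (assembling the pointwise decompositions of $\Delta_h P$ over $h\in H$, or an analytic-rank-versus-partition-rank equivalence for $M$) without executing either; the assembly in particular is delicate, since knowing that $\Delta_h P$ has bounded rank for each $h$ in a dense set does not by any soft argument yield a single bounded-rank expression for $P$ --- this is where Green--Tao's symmetry/integration argument, or the later regularization arguments, are genuinely needed. A second, smaller point: the inductive hypothesis as you state it gives that each $\Delta_h P$ is a sum of $O_d(1)$ products of lower-degree polynomials, but the classical Green--Tao conclusion is the weaker statement that $\Delta_h P$ is a \emph{function} of boundedly many lower-degree polynomials; converting between these notions (function-of-few versus Schmidt rank) costs an additional argument that your outline elides. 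None of this makes your plan wrong --- it is the correct architecture --- but as written it is a map of the proof rather than the proof, and in that respect it is on the same footing as the paper's citation.
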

\begin{conjecture}\label{conj-bias} For $p >d$ we have $r = s^{-O_d(1)}$. The conjecture is known for $d=2,3$ (\cite{s-h}).
\end{conjecture}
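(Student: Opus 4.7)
The plan is to reduce the conjecture to a polynomial inverse theorem relating algebraic rank to analytic rank in characteristic $p > d$, and then to execute this reduction inductively on $d$. Iterated Cauchy--Schwarz applied to $|\mE_v e_q(P(v))|$ yields the elementary bound
\[
|\mE_v e_q(P(v))|^{2^d} \leq \|e_q(P)\|_{U_d}^{2^d},
\]
so $|\mE_v e_q(P(v))| \geq q^{-s}$ forces $\text{arank}(P) \leq s$. Hence the conjecture is equivalent, up to adjusting the degree-dependent constants, to the statement that $\text{arank}(P) \leq t$ implies $r_k(P) \leq C_d\, t^{C_d'}$, uniformly for $p > d$.

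For $d = 2$ this is essentially classical: the analytic rank of a quadratic form coincides, up to a factor of $2$, with the rank of its associated symmetric bilinear form, which in turn differs from the Schmidt rank $r_k(P)$ only by a bounded factor. For $d = 3$, the result \cite{s-h} supplies the needed polynomial inverse theorem, established by analyzing the derivative bilinear form in a generic direction and reconstructing a product decomposition of $P$ via a Schmidt-type argument. These base cases also pin down explicit values of $C_d, C_d'$ for $d \leq 3$.

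For general $d$ I would proceed by induction on $d$, in the spirit of the recent work of Cohen--Moshkovitz, Mili\'cevi\'c, Lampert and others. Differentiate $P$ in a generic direction $h$ to obtain $\partial_h P$ of degree $d-1$; smallness of $\text{arank}(P)$ forces smallness of $\text{arank}(\partial_h P)$ on average in $h$, via the Fubini-type identity for the Gowers norm, so the inductive hypothesis supplies an algebraic decomposition of $\partial_h P$ of polynomial rank for most $h$. An integration/interpolation step, using that $d!$ is invertible in $k$ because $p > d$, then reconstructs a low-rank decomposition of $P$ itself from decompositions of its iterated derivatives.

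The main obstacle lies in the accumulation of constants through the induction. Each layer multiplies the polynomial exponent, and the integration step can lose further factors, so a naive execution yields tower-type (or worse) dependence rather than polynomial dependence in $s$. Achieving genuine polynomial dependence $r = s^{O_d(1)}$ requires an \emph{efficient} inverse theorem at every stage, together with regularity-style control of the rank decompositions of $\partial_h P$ that is uniform in $h$, so that the integration step does not blow up. Maintaining uniformity in $k$ under the mild hypothesis $p > d$ (rather than $p \gg d$) is an additional subtlety, since one must avoid ever dividing by a quantity whose non-vanishing depends on the characteristic exceeding some function of $d$. Combining polynomial control with the sharp characteristic hypothesis is precisely the content of the open conjecture.
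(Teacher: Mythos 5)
This statement is labelled a conjecture in the paper, and the paper offers no proof of it: it is stated as open for $d\geq 4$, with only the cases $d=2,3$ attributed to \cite{s-h}. So there is no argument of the authors to compare yours against, and your submission does not close the gap either. Your opening reduction is sound: since $|\mE_v e_q(P(v))|\leq \|e_q(P)\|_{U_d}$, bias at least $q^{-s}$ forces $\mathrm{arank}(P)\leq s$, so the conjecture would follow from a polynomial inverse theorem of the form ``$\mathrm{arank}(P)\leq t$ implies $r_k(P)\leq C_d t^{C_d'}$'' uniformly for $p>d$. The base cases $d=2$ (bias of a quadratic phase determined by the rank of the associated bilinear form) and $d=3$ (via \cite{s-h}) are also fine.

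The genuine gap is the inductive step for general $d$, which is exactly the quantitative heart of the conjecture and which you only gesture at. Two specific points are unproved: (i) that small $\mathrm{arank}(P)$ yields, for a \emph{positive proportion} of directions $h$ with effective bounds, small $\mathrm{arank}(\partial_h P)$ in a form strong enough to apply the inductive hypothesis; and (ii) that one can ``integrate'' the resulting low-rank decompositions of the derivatives back into a decomposition $P=\sum Q_iR_i$ while losing only a polynomial factor in the rank, uniformly in $h$ and uniformly in $k$ under the bare hypothesis $p>d$. As you yourself note, a naive execution of this scheme multiplies exponents at each stage and does not give $r=s^{O_d(1)}$; no mechanism is supplied to prevent that blow-up. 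Consequently the proposal is a research plan that correctly identifies the difficulty, not a proof, and within this paper the statement remains exactly what it is labelled: a conjecture, supported only by the low-degree cases cited.
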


\begin{remark}\label{norm-bias-rank}
When $p>d$ then one can recover $P$ from $(h_1, \ldots, h_d)_P$ so that if the rank of $(h_1, \ldots, h_d)_P$  is  $<r$ of then
so is the rank of $P$. 
\end{remark} 
For multilinear functions; in particular for  $(h_1, \ldots, h_d)_P$, the converse is also true:

\begin{proposition}[\cite{kz-approx}]\label{equi-multi}
 Let $r,d>0$. For any finite field $k$ of size $q=p^l$, $p>d$, any vector space $V$ over $k$, any polynomial  $P:V \to k$ of degree $d$, if
\[
\|e_q(P)\|^{2^d}_{U_d} = |\mE_{h_1, \ldots, h_d} e_q(h_1, \ldots, h_d)_P| <q^{-r}
\]
for some polynomial,
then $P$ is of rank $>r$.
\end{proposition}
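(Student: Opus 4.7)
The plan is to prove the contrapositive: if $P$ has algebraic rank $\leq r$, then $\|e_q(P)\|_{U_d}^{2^d} \geq q^{-r}$. Write $P = P_0 + \sum_{i=1}^r Q_i R_i$ with $\deg P_0 < d$ and $\deg Q_i, \deg R_i < d$. Since the $d$-th iterated finite difference kills polynomials of degree less than $d$, the $P_0$ piece drops out of $(h_1, \ldots, h_d)_P$. A Leibniz-type identity for iterated differences then decomposes, for each $i$,
\[
(h_1, \ldots, h_d)_{Q_i R_i} \;=\; \sum_{S \subseteq [d],\, |S| = \deg Q_i} \widetilde{Q_i}(h_S)\, \widetilde{R_i}(h_{S^c}),
\]
where $\widetilde{Q_i}, \widetilde{R_i}$ are the symmetric multilinear forms attached to the leading parts of $Q_i$ and $R_i$. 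The hypothesis $p > d$ enters here to guarantee that $d!$ is invertible, so that the multilinear form faithfully records the top-degree component of each factor.

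The next step interprets the Gowers norm as a counting statistic. Since $(h_1, \ldots, h_d)_P$ is linear in $h_1$ once $h_2, \ldots, h_d$ are fixed, the inner expectation over $h_1$ collapses to the indicator that this linear form vanishes identically, giving
\[
\|e_q(P)\|_{U_d}^{2^d} \;=\; \Pr_{h_2, \ldots, h_d}\bigl[(h_1, h_2, \ldots, h_d)_P \equiv 0 \text{ as a function of } h_1\bigr].
\]
Using the decomposition from the first step, the vanishing condition on the linear form in $h_1$ translates into a system of constraints on $h_2,\ldots,h_d$ governed by the $r$ factorization pairs $(\widetilde{Q_i}, \widetilde{R_i})$. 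A direct counting argument, or equivalently an iterated application of the Cauchy-Schwarz inequality used in the definition of the $U_d$ norm, then yields the desired lower bound $q^{-r}$ on this probability.

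The main obstacle is the bookkeeping required to achieve the sharp exponent $r$, rather than a weaker bound of the form $q^{-r c_d}$ with $c_d$ a combinatorial factor depending on $d$. Indeed, the partition rank of the multilinear form $(h_1, \ldots, h_d)_P$ that one reads off naively from the Leibniz expansion is only bounded by $r \cdot c_d$ for some $c_d \leq 2^d$, and a crude partition-rank argument would only give $q^{-r c_d}$. To tighten this to $q^{-r}$, one must exploit the full symmetry of $(h_1, \ldots, h_d)_P$ and the characteristic assumption $p > d$ to reorganize the $r \cdot c_d$ apparent summands, via the original algebraic factorization $P = \sum Q_i R_i$, into essentially $r$ independent constraints on $h_2,\ldots, h_d$. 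This symmetrization-and-regrouping step is where the bulk of the work lies.
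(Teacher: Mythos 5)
The paper itself offers no proof of this proposition: it is imported verbatim from \cite{kz-approx}, so there is no internal argument to compare yours against, and I am judging the proposal on its own terms. Your first two steps are correct and standard: for $P$ of degree exactly $d$ the form $(h_1,\ldots,h_d)_P$ is independent of the base point and multilinear, the lower-degree part of a Schmidt decomposition is killed by the $d$-fold difference, the Leibniz expansion into terms $\widetilde{Q_i}(h_S)\widetilde{R_i}(h_{S^c})$ is valid (with $p>d$ used exactly as you say), and averaging over $h_1$ does convert $\|e_q(P)\|_{U_d}^{2^d}$ into the probability that the linear form $h_1\mapsto (h_1,h_2,\ldots,h_d)_P$ vanishes identically.

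The gap is the final step, and it is not a bookkeeping problem that symmetrization can repair: the contrapositive you set out to prove --- Schmidt rank $\le r$ forces $\|e_q(P)\|_{U_d}^{2^d}\ge q^{-r}$ --- is false with the constant $1$. Take $d=2$, $V=k^2$, $P(x)=x_1x_2$, so $r(P)=1$. Writing $h_1=(a_1,a_2)$, $h_2=(b_1,b_2)$ one computes $(h_1,h_2)_P=a_1b_2+a_2b_1$, hence $\mE_{h_1,h_2}e_q(a_1b_2+a_2b_1)=\bigl(\mE_{a,b\in k}e_q(ab)\bigr)^2=q^{-2}$, while the proposition with $r=1$ would force $r(P)>1$. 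The reason is visible in your own framework: the linear form $h_1\mapsto a_1b_2+a_2b_1$ vanishes identically only when $b_1=b_2=0$, i.e.\ a \emph{single} rank-one summand $Q_iR_i$ imposes as many independent conditions on $(h_2,\ldots,h_d)$ as there are sets $S$ in its Leibniz expansion, and this multiplicity cannot be regrouped away. What is true with constant one is Lovett's inequality $|\mE\, e_q(M)|\ge q^{-\mathrm{prank}(M)}$ for the multilinear form $M=(h_1,\ldots,h_d)_P$ itself, combined with the Leibniz bound $\mathrm{prank}(M)\le\sum_i\binom{d}{\deg Q_i}\le\binom{d}{\lfloor d/2\rfloor}\,r(P)$, and the example above shows this binomial loss is sharp. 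So the statement should either be read with ``rank'' meaning the partition rank of $(h_1,\ldots,h_d)_P$ (which is how the adjacent Remarks in the paper pass between $P$ and its polarization), or a $d$-dependent constant must be inserted in the exponent; your proof should target one of those corrected statements, replacing the deferred ``direct counting argument'' by the two-step chain just described rather than attempting the impossible sharp regrouping.
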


\begin{remark}If $P$ is of degree $d$ and $p>d$ then $(h, \ldots, h)_P = P(h)/d!$ so that if $P$ is of rank $>r$ then also $(h_1, \ldots, h_d)_P$ is of rank $>r$ as a polynomial on $V^d$.
\end{remark}

\begin{lemma}\label{bias-rank-2}  For any $R$ of degree $<d$ we have
\[
|\mE_{h_1, \ldots, h_d}  e_q((h_1, \ldots, h_d)_P+R(h_1, \ldots, h_d))| \le |\mE_{h_1, \ldots, h_d}  e_q((h_1, \ldots, h_d)_P)|
\]
\end{lemma}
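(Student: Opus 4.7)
The plan is to prove this by iterated Cauchy--Schwarz, exploiting both the multilinearity of $\Phi := (h_1,\ldots,h_d)_P$ and the degree bound on $R$. Starting from $|\mE_{\vec h}e_q(\Phi+R)|^2$, I would apply Cauchy--Schwarz in the variable $h_1$ and substitute $h_1 \mapsto h_1 + u_1$ in the second copy of the integrand: multilinearity of $\Phi$ collapses $\Phi(h_1+u_1, h_2,\ldots,h_d) - \Phi(h_1,\ldots,h_d)$ into $\Phi(u_1, h_2, \ldots, h_d)$, killing the $h_1$ dependence of the $\Phi$ term, while $R$ turns into its discrete derivative $\Delta_{u_1}R := R(h_1+u_1,h_2,\ldots,h_d) - R(h_1,\ldots,h_d)$.

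Iterating this step once per variable $h_i$ with a fresh shift $u_i$, after $d$ applications the phase becomes (up to signs) $\Phi(u_1, \ldots, u_d) + \Delta_{u_1}\cdots\Delta_{u_d} R$, averaged over all shift and original variables. The decisive observation is a pigeonhole degree count: any monomial $h_1^{a_1}\cdots h_d^{a_d}$ of $R$ has $\sum a_i < d$, so at least one $a_i = 0$, and the corresponding difference operator $\Delta_{u_i}$ annihilates that monomial. Hence $\Delta_{u_1}\cdots\Delta_{u_d}R \equiv 0$, and one obtains
\[
|\mE e_q(\Phi + R)|^{2^d} \;\le\; \bigl|\mE_{\vec u}\, e_q(\Phi(u_1, \ldots, u_d))\bigr| \;=\; |\mE e_q(\Phi)|,
\]
the right-hand side being the Gowers $U^d$-norm of $e_q(P)$ to the $2^d$-th power, and in particular a non-negative real.

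The main obstacle is upgrading this $2^d$-th power bound to the first-power form stated in the lemma: since $|\mE e_q(\Phi)| \le 1$, the claim $|\mE e_q(\Phi+R)| \le |\mE e_q(\Phi)|$ is strictly sharper than what plain iterated Cauchy--Schwarz gives. To close this gap I expect one should argue directly, imitating the prototype case $d=2$: averaging out the last variable $h_d$ against the linear form $\Phi(h_1,\ldots,h_{d-1},\cdot)$ (which is linear in $h_d$ by multilinearity) produces an exact indicator of the locus $\{\Phi(\vec h'',\cdot) = -\xi\}$ for a suitable $\xi$ extracted from $R$, and a triangle-inequality bound on the remaining exponential sum yields $\mathbb{P}_{\vec h''}[\Phi(\vec h'',\cdot) = -\xi] \le \mathbb{P}_{\vec h''}[\Phi(\vec h'',\cdot)\equiv 0] = \mE e_q(\Phi)$, where the last inequality uses that for a multilinear map the fiber over $0$ is largest. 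The subtlety for $d\ge 3$ is that $R$ can have degree up to $d-1$ in $h_d$ (not merely linear), so the $h_d$-average is not automatically a coset indicator; I anticipate inducting on $d$ and peeling off the higher-degree-in-$h_d$ part of $R$ via a Fourier expansion of $e_q(R)$ that only spawns characters $\vec\xi\cdot\vec h$ linear in each $h_i$, and then applying the above fiber-inequality to each Fourier mode. This last step, controlling the Fourier $\ell^1$-mass of $e_q(R)$ against the multilinear fiber count, is the technical heart of the argument.
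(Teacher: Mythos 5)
The paper states Lemma \ref{bias-rank-2} without any proof, so there is no argument of the authors to compare yours to; judged on its own, your proposal is correct exactly up to the point you yourself flag, and genuinely incomplete after it. The Cauchy--Schwarz half is fine: writing $\Phi=(h_1,\ldots,h_d)_P$, iterating Cauchy--Schwarz with fresh shifts $u_1,\ldots,u_d$ and using that every monomial of $R$ omits at least one block $h_i$ (as $d$ nonnegative degrees summing to less than $d$ force one of them to vanish), so that $\Delta_{u_1}\cdots\Delta_{u_d}R\equiv 0$, gives
\[
|\mE\, e_q(\Phi+R)|^{2^d}\;\le\;\mE_{\vec u}\,e_q(\Phi(u_1,\ldots,u_d))\;=\;|\mE\, e_q(\Phi)|,
\]
i.e.\ $|\mE e_q(\Phi+R)|\le|\mE e_q(\Phi)|^{1/2^d}$, which is strictly weaker than the stated inequality. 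Your $d=2$ argument (average out $h_2$ to get an indicator of $\{\Phi(h_1,\cdot)=-\xi\}$, then use that for a multilinear map the fiber over $0$ is the largest fiber --- which is correct, by fixing all blocks but the last and noting that a linear map has all nonempty fibers of equal size) does close the gap for $d=2$.

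For $d\ge 3$ the route you propose cannot work as described. Expanding $e_q(R)$ into additive characters and applying the fiber inequality mode by mode costs a factor of the Fourier $\ell^1$-mass $\sum_{\xi}|\widehat{e_q(R)}(\xi)|$, and this is typically exponentially large in $\dim V$: already for a nondegenerate quadratic $R$ in a single block, all $q^{\dim V}$ Fourier coefficients have modulus $q^{-\dim V/2}$, so the $\ell^1$-mass is $q^{\dim V/2}$ and the triangle inequality over modes loses far more than the bound can absorb. Any successful argument must keep the cancellation between modes; the natural candidate --- inducting on $d$ via the stronger relative statement that $\mP[\mu=\beta]\le\mP[\mu=0]$ for a multilinear map $\mu:V^{e}\to W$ and a polynomial map $\beta$ of degree $<e$ --- runs into exactly the same difficulty when $\beta$ has degree $\ge 2$ in the last block, so this really is an unresolved step and not a routine one. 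Two mitigating remarks: first, everywhere the paper invokes Lemma \ref{bias-rank-2} (in the proof of Proposition \ref{line-plane}) only a qualitative smallness of $\|e_q(aQ+cl)\|_{U_d}$ is needed, and there the perturbation $R=cl$ is linear, so $(h_1,\ldots,h_d)_{aQ+cl}=a(h_1,\ldots,h_d)_Q$ and even exact equality of the $U_d$-norms holds; your $2^d$-th power bound is more than enough for that application. Second, if you do need the first-power statement, you should either locate a proof for general lower-order perturbations in the literature on analytic rank, or restrict the lemma to the class of $R$ (e.g.\ multi-affine $R$, i.e.\ sums of multilinear forms in proper subsets of the blocks) for which the ``average out $h_d$, compare fibers'' induction actually closes.
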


\begin{lemma}\label{subspace-rank} Let $P:V\to k$ be a polynomial of degree $d$ and rank $R$, and let $W\subset V$ be a subspace of codimension $s$. Then the rank of $P_{|W}$ is $\geq R-s.$
\end{lemma}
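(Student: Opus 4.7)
The plan is to lift a minimal rank decomposition of $P_{|W}$ from $W$ to an approximate decomposition of $P$ on $V$, and then absorb the error using the $s$ linear equations that cut out $W$.

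First, set $r' := r(P_{|W})$ and pick a decomposition $P_{|W} = \sum_{i=1}^{r'} Q_i' R_i'$ with $Q_i', R_i'$ polynomials on $W$ of degrees strictly less than $d$. Fixing any linear complement of $W$ in $V$, each $Q_i'$ and $R_i'$ extends to a polynomial $\tilde Q_i, \tilde R_i$ on $V$ of the same degree; in particular $\deg \tilde Q_i, \deg \tilde R_i < d$.

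Next, consider the polynomial $F := P - \sum_{i=1}^{r'} \tilde Q_i \tilde R_i$ on $V$. Then $F$ has degree $\leq d$ and $F_{|W} \equiv 0$ by construction. Choose linear forms $\ell_1,\dots,\ell_s$ on $V$ whose common zero set is $W$; after a linear change of coordinates we may assume $\ell_j = x_j$. Grouping monomials of $F$ by the least index $j$ such that $x_j$ appears then gives an expression $F = \sum_{j=1}^s \ell_j S_j$ with each $S_j$ a polynomial of degree $\leq d-1$.

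Combining the two steps,
\[
P \;=\; \sum_{i=1}^{r'} \tilde Q_i \tilde R_i \;+\; \sum_{j=1}^{s} \ell_j\, S_j,
\]
which exhibits $P$ as a sum of $r'+s$ products of polynomials of degree $<d$. Hence $R = r(P) \leq r' + s$, so $r(P_{|W}) = r' \geq R-s$, as required. There is no real obstacle here; the only point requiring care is the degree bookkeeping when extending $Q_i', R_i'$ off of $W$ and when rewriting $F$ modulo the ideal $(\ell_1,\dots,\ell_s)$, and both are settled by choosing coordinates adapted to $W$.
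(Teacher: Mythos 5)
Your proof is correct, and it is the natural argument; the paper in fact states Lemma \ref{subspace-rank} in Appendix A without any proof, so yours fills a genuine gap rather than duplicating one. The decomposition $P=\sum_i \tilde Q_i\tilde R_i+\sum_j \ell_j S_j$ exhibits $P$ as a sum of $r(P_{|W})+s$ products of polynomials of degree $<d$, which is exactly the inequality $r(P)\le r(P_{|W})+s$ in the form the definition of rank requires. The only point worth making explicit is the step ``$F_{|W}\equiv 0$ implies $F\in(\ell_1,\dots,\ell_s)$'': after your change of coordinates the monomials of $F$ involving none of $x_1,\dots,x_s$ sum to $F(0,\dots,0,x_{s+1},\dots,x_n)=F_{|W}$, and you need this to vanish as a \emph{polynomial}, not just as a function. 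This is automatic if the identity $P_{|W}=\sum_i Q_i'R_i'$ is read formally, and in any case follows from $\deg F\le d<q$ under the paper's standing hypothesis $\mathrm{char}(k)>d$; a one-line remark to that effect would make the argument airtight.
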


\begin{lemma}\label{size}  Let $s>0$, $\bar d = (d_1, \ldots, d_c)$, $k$ a finite field.  There exists $r=r(\bar d, s,k)$ such that for any $\bar P=\{P_1, \ldots, P_c\}$,  $P_i:V\to k$ with deg$(P_i)\le d_i$,  $|X_{\bar P}|=q^{dim(V)-c}(1+q^{-s})$. In the case when $p>\max_i d_i$, the bound on  $r$ is uniform in $k$.
\end{lemma}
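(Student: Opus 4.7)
The plan is to evaluate $|X_{\bar P}|$ via the standard Fourier-analytic identity and bound the error terms using the bias--rank correspondence (Proposition \ref{bias-rank-1}). First I would expand
\[
|X_{\bar P}| = \sum_{v \in V} \prod_{i=1}^{c} \frac{1}{q}\sum_{t_i \in k} e_q(t_i P_i(v)) = q^{-c}\sum_{t \in k^c} \sum_{v \in V} e_q\!\left(\sum_{i=1}^{c} t_i P_i(v)\right).
\]
The summand at $t=0$ contributes the desired main term $q^{\dim(V)-c}$, so the task reduces to bounding the error sum over $t \neq 0$ by $O(q^{\dim(V)-c-s})$.

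Next, for each nonzero $t \in k^c$, set $Q_t := \sum_i t_i P_i$ and $d^* := \max\{d_i : t_i \neq 0\}$. The degree-$d^*$ component of $Q_t$ is $\sum_{i : d_i = d^*} t_i P_i$, a nontrivial $k$-linear combination of polynomials in the sub-family $\bar P_{d^*}$. By the hypothesis $r(X_{\bar P}) > r$ together with the degree-wise rank definition, every such combination has rank exceeding $r$. Since adding any polynomial of strictly smaller degree costs at most one product in a rank decomposition (any $L$ with $\deg L < d^*$ contributes a summand $1 \cdot L$), the polynomial $Q_t$, viewed as a polynomial of degree $d^*$, has rank at least $r - 1$.

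Choosing $r$ large enough in terms of $s$, $c$, $\bar d$, and (if $p \leq \max_i d_i$) the field $k$, Proposition \ref{bias-rank-1} yields
\[
\left| \mE_{v \in V}\, e_q(Q_t(v)) \right| \leq q^{-s-c}
\]
for every nonzero $t$. Summing over the $q^c - 1$ such values of $t$ and multiplying by $q^{-c}$ produces total error $O(q^{\dim(V)-c-s})$, which gives $|X_{\bar P}| = q^{\dim(V)-c}(1 + O(q^{-s}))$. The uniformity in $k$ under $p > \max_i d_i$ is inherited directly from the corresponding uniformity clause of Proposition \ref{bias-rank-1}.

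The only genuine wrinkle is the handling of polynomials of differing degrees in step two. The key one-line observation resolving it is that appending a lower-degree correction to a high-rank polynomial decreases its rank by at most $1$, so the high rank guaranteed for the top-degree component of $Q_t$ transfers (up to this $-1$) to $Q_t$ itself, making $Q_t$ eligible for the bias bound.
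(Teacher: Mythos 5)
Your proposal is correct and follows essentially the same route as the paper: expand $|X_{\bar P}|$ as a character sum over $\bar t\in k^c$, take the main term from $\bar t=0$, and bound each nonzero term via Proposition \ref{bias-rank-1}. You are in fact somewhat more careful than the paper's two-line proof, since you explicitly justify (via the degree-wise rank definition and the observation that a lower-degree tail lowers rank by at most one) why each nonzero combination $\sum_i t_iP_i$ of polynomials of \emph{differing} degrees is still of high rank.
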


\begin{proof}
The number of points on $X$ is given by 
\[
q^{-c}\sum_{\bar a \in k^c}\sum_{x \in V} e_q(\sum_{i=1}^c a_iP_i(x)) .
\]
By Proposition \ref{bias-rank-1} for any $s>0$ we can choose $r$ so that for any $\bar a \ne 0$ we have 
\[
|\sum_{x \in V} e_q(\sum_{i=1}^c a_iP_i(x)) | < q^{-s}|V|.
\]
\end{proof}


\section{Almost-sure results}

In \cite{KR} (Theorem 1)  the following description of degree $<m$ polynomials is given:
\begin{proposition}\label{kau-ron} Let $P:V \to k$. Then $P$ is a polynomial of degree $\le a$ if and only if
the restriction of $P$ to any affine subspace of dimension $l=\lceil \frac{a+1}{q-q/p}\rceil$ is a polynomial of degree $<m$. 
\end{proposition}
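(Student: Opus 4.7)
The forward direction is immediate: if $P$ is globally a polynomial of degree $\le a$, then its restriction to any affine subspace is as well. The content lies in the converse, and the plan is to characterize global degree via the vanishing of iterated discrete derivatives and then bound the dimension of the affine subspace on which these derivatives live.

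The first step I would carry out is to set up a discrete-derivative characterization of degree. Over $\mF_q$ the natural operator is a mixed iterate $\prod_i \Delta_{h_i}^{j_i}$, where $\Delta_h P(x):=P(x+h)-P(x)$, with total order $\sum_i j_i = a+1$ and each individual exponent capped by $j_i \le q - q/p$. The cap reflects the fact that on $\mF_q$ a single direction can only absorb derivative order up to $q-q/p$ before the derivative vanishes for formal reasons coming from $x^q=x$ and the Frobenius structure of the reduced polynomial ring $k[x_1,\dots,x_n]/(x_i^q-x_i)$. In characteristic zero or in characteristic larger than $a$ this specializes to the classical statement that $\Delta_{h_1}\cdots\Delta_{h_{a+1}}P \equiv 0$ iff $\deg P \le a$.

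The second step is a dimension count. Each such iterated derivative is a linear combination of values of $P$ at points of the form $x + \sum_i c_i h_i$, all lying in the affine span of $x, h_1, \ldots, h_r$. Given the cap $j_i \le q-q/p$, achieving total order $a+1$ forces at least $r \ge \lceil (a+1)/(q-q/p)\rceil = l$ distinct directions, so the relevant affine span has dimension at most $l$. Under the hypothesis, the restriction of $P$ to any such $l$-dimensional affine subspace is a polynomial of degree $\le a$; applied to a subspace containing the span above, this makes the iterated derivative vanish at $(x; h_1, \ldots, h_l)$. Quantifying over all base points $x$ and all direction tuples then yields, via the characterization of Step~1, that $P$ has global degree $\le a$.

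The principal obstacle is nailing down the sharp constant $q - q/p$ in Step~1. The large-characteristic case is routine, since direct binomial-coefficient manipulation of iterated differences already gives that $a+1$ plain derivatives suffice. The small-characteristic case, however, requires a careful analysis of how Hasse-style higher derivatives decompose polynomial functions on $\mF_q^n$ and a precise tracking of which exponent patterns in $\prod_i \Delta_{h_i}^{j_i}$ cannot be further reduced; this is where the quantity $q - q/p = q(p-1)/p$ arises, rather than $q-1$ or $a+1$. Once this combinatorial core is secured, the dimension count in Step~2 and the deduction of the global statement in Step~3 are straightforward.
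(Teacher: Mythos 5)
The paper does not actually prove this statement: it is quoted verbatim from \cite{KR} (Theorem 1), so there is no internal proof to compare against, and the ``$<m$'' in the statement is a typo for ``$\le a$'', which you correctly read through. Your outline follows the same strategy as Kaufman--Ron in spirit: reduce global degree to the vanishing of iterated discrete derivatives $\prod_i \Delta_{h_i}^{j_i}P$ of total order $a+1$ with each exponent capped by $q-q/p$, observe that $l=\lceil (a+1)/(q-q/p)\rceil$ directions suffice to reach that total order so every such derivative is supported on an affine subspace of dimension at most $l$, and conclude from the hypothesis. The easy implications (global degree $\le a$ forces the restrictions to have degree $\le a$, and degree $\le a$ on the spanned subspace forces the order-$(a+1)$ derivative to vanish) are fine.

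The genuine gap is that you assert but do not prove the hard half of your Step~1: that a function on $\mF_q^n$ all of whose capped iterated derivatives of total order $a+1$ vanish must have (reduced) degree $\le a$, with the sharp per-direction cap $q-q/p$. This is not a routine binomial manipulation in small characteristic; it is precisely the content of the Kaufman--Ron theorem, proved there by a careful analysis of which monomials of the reduced ring $k[x_1,\dots,x_n]/(x_i^q-x_i)$ are detected by such derivative patterns (equivalently, by the structure of low-weight dual codewords of generalized Reed--Muller codes over $\mF_q$). You explicitly flag this as ``the principal obstacle'' and defer it, but without it the argument establishes nothing beyond the large-characteristic case $l\le 2$. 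There is also a small slip in Step~2: achieving total order $a+1$ under the cap requires \emph{at least} $l$ directions, and the point is that exactly $l$ directions \emph{suffice}, so that the relevant span has dimension at most $l$; as written (``forces $r\ge l$ \dots so the span has dimension at most $l$'') the inequality points the wrong way.
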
 

Note that when $a<q$ then $l\le 2$. 

In \cite{KR} the above criterion is used for polynomial testing over general finite fields. In \cite{kz-uniform} (Corollary 1.14) it is shown how the arguments in \cite{KR} can be adapted to polynomial testing within a subvariety variety $X \subset V$ (high rank is not required). 

\begin{theorem}[Subspace splining on $X$]\label{testing-lines} For any  $a,d, L>0$ there exists an $A=A(d,L,a) > 0$ such that the following holds. 
Let $X \subset V(k)$ be  a  complete intersection of degree $d$, codimension $L$. Then any  weakly polynomial  function $f$ of degree $a$
such that the restriction of $f$ to $q^{-A}$-a.e  $l$-dimensional affine subspace, 
$l=\lceil \frac{a}{q-q/p}\rceil$ is a polynomial of degree $<a$ is weakly polynomial of degree $<a$.
\end{theorem}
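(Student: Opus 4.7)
The plan is to reduce the claim to the Kaufman--Ron criterion (Proposition \ref{kau-ron}) applied inside each affine subspace $M \subset X$, and then to upgrade the almost-sure hypothesis on $X$ to a pointwise statement on $M$ via a counting argument followed by a Schwartz--Zippel style degree-drop estimate.

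First I would fix an arbitrary affine subspace $M \subset X$. Since $f$ is weakly polynomial of degree $a$, the restriction $f|_M$ is a polynomial on the affine space $M$ of degree at most $a$; pick an origin on $M$ and decompose $f|_M = R + Q$ with $\deg(R) < a$ and $Q$ homogeneous of degree exactly $a$. The goal is to show $Q \equiv 0$. By Proposition \ref{kau-ron} applied to the affine space $M$ with degree bound $a-1$, this reduces to showing that $f|_\Lambda$ has degree $< a$ for every $l$-dimensional affine subspace $\Lambda \subset M$. Call such a $\Lambda$ \emph{bad} (for $M$) when $f|_\Lambda$ has degree exactly $a$; equivalently, when $Q|_\Lambda$ has degree exactly $a$. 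Every bad $\Lambda \subset M$ is also a bad $l$-dim affine subspace of $X$ in the sense of the hypothesis, so the bad $\Lambda$ inside $M$ form a subset of the global bad set of $X$, whose cardinality is at most $q^{-A}\cdot |\mcL_l(X)|$, where $\mcL_l(X)$ is the set of $l$-dim affine subspaces of $X$.

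The main technical step is to control the \emph{fraction} of bad $\Lambda$ inside $M$. Using Lemma \ref{size} to count $l$-dim affine subspaces of $X$ and of $M$, together with a double-counting argument applied to incidences $(\Lambda, M')$ with $\Lambda \subset M' \subset X$, I would show that the bad fraction inside any $M$ is at most $q^{-A'}$ for some $A' = A'(A, d, L, a)$ tending to infinity with $A$. The proof then closes via Schwartz--Zippel: if $Q$ is a nonzero polynomial of degree exactly $a$ on the affine space $M$ and $q > a$, then the fraction of $l$-dim affine subspaces $\Lambda \subset M$ on which $Q|_\Lambda$ drops in degree is bounded above by $a/q < 1$, since the locus of $\Lambda$ where the top-degree part of $Q$ vanishes on the linear part of $\Lambda$ is cut out by a polynomial of degree $a$ on the parameter space of directions. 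Choosing $A$ large enough that $q^{-A'} < 1 - a/q$ forces $Q \equiv 0$ on $M$, and since $M$ was arbitrary, $f$ is weakly polynomial of degree $<a$.

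The main obstacle is the middle step: the hypothesis only provides a \emph{global} average bound on bad subspaces of $X$, and transferring this to a \emph{uniform pointwise} bound on every $M \subset X$ requires a regularity statement for the incidence structure between affine subspaces of $X$. A naive double count loses a factor of order $|\mcL_l(X)|/|\mcL_l(M)|$ which must be absorbed by taking $A$ large; making this quantitative uniformly in $\dim(V)$ is where the complete-intersection structure of $X$ and the equidistribution estimates of Appendix A (Lemma \ref{size}, which ultimately rests on the bias--rank dichotomy of Proposition \ref{bias-rank-1}) enter essentially.
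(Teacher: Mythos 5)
There is a genuine gap, and it sits exactly where you flag it as "the main technical step." The hypothesis gives a \emph{global} density bound: the number of bad $l$-flats in $X$ is at most $q^{-A}|\mcL_l(X)|$. You then want, for an \emph{arbitrary} affine subspace $M\subset X$, a bound on the fraction of bad $l$-flats \emph{inside} $M$. No counting or incidence argument can deliver this, because $M$ may have bounded dimension (indeed, since $q>a$ one has $l\le 2$, and the decisive case of the conclusion is $\dim M=2$: by the remark after Definition \ref{weak-def}, weak polynomiality of degree $<a$ is exactly the statement that $f$ has degree $<a$ on \emph{every} plane in $X$). For such an $M$ the set $\mcL_l(M)$ is a single element, or has size polynomial in $q$, while $q^{-A}|\mcL_l(X)|$ grows without bound as $\dim V\to\infty$; so the inclusion "bad flats of $M$ $\subset$ global bad set" carries no information, and the loss factor $|\mcL_l(X)|/|\mcL_l(M)|$ cannot be absorbed by any $A=A(d,L,a)$ independent of $\dim V$. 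Your final Schwartz--Zippel step is fine for an honest polynomial on an affine space, but you never get to apply it because the local density bound it needs is unobtainable by your route.

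The missing idea is a \emph{propagation} (self-correction) argument rather than a counting argument: one must show that a single bad $l$-flat $\Lambda_0\subset X$ forces at least a $q^{-A}$-fraction of all $l$-flats in $X$ to be bad, which contradicts the hypothesis once $A$ is large. This is done by connecting $\Lambda_0$ to generic $l$-flats through low-dimensional affine subspaces of $X$ on which $f$ is an actual polynomial of degree $\le a$, and using the degree-$a$ structure there to push badness outward; this is the content of the adaptation of the Kaufman--Ron analysis carried out in \cite{kz-uniform}. Note also that the present paper does not prove Theorem \ref{testing-lines} at all --- it imports it verbatim as Corollary 1.14 of \cite{kz-uniform} --- so there is no internal proof to compare with; but as written your argument does not close, and the step that fails is the global-to-local transfer, not a technicality.
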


Let the notation be as in Section \ref{jan-extension}.

\begin{proposition}\label{line-plane} Fix $\bar d =\{ d_i\} $ and $s>0$.
Let $d:=max _id_i $. There exists $r=r(\bar d, s)$ such that for any finite field $k$ with $char(k)>d$, a $k$-vector space $\mV$ and $ \bar P\in \mcP _{\bar d}(\mV)$ of rank $>r$ the following holds. 

 \begin{enumerate}
 \item
For any $b\in k$ and  $q^{-s}$-almost any affine line $L \subset X_b$ there exists an affine plane $M \subset X$ containing $L$ such that $M \cap X_0 \ne \emptyset$.
 \item  For $q^{-s}$-almost any affine plane $M \subset X_b$ there exists an affine $3$-dim subspace $N\subset X$ containing $M$ such that $N \cap X_0 \ne \emptyset$.
 \end{enumerate} 
\end{proposition}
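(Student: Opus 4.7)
The plan is to establish both parts by a Fourier-analytic counting argument driven by the high rank of $\bar P$; I describe part $(1)$ in detail and then indicate how part $(2)$ is entirely parallel.

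The case $b=0$ is immediate, since any $2$-plane $M \subset X$ extending $L$ meets $X_0 \supset L$. So assume $b \neq 0$. Parameterize $L = \{x_0 + tv : t \in k\}$ with $x_0 \in X_b$, $v \in \ker(l') \setminus \{0\}$ and $x_0 + tv \in X$ for all $t$. For a candidate second direction $w \in V$, the plane $M = x_0 + \mathrm{span}\{v,w\}$ lies in $X$ iff each polynomial $P_i(x_0 + tv + sw)$ vanishes identically in $(t,s)$. Since $\mathrm{char}(k) > d \geq d_i$, this is equivalent to the vanishing of each of its monomial coefficients $a_{i,j,k}(x_0, v, w)$, for $(j, k)$ with $j + k \leq d_i$; each $a_{i,j,k}$ is a polarization-type expression of total degree $\leq d_i$ in the combined variables. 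The constraints with $k = 0$ are already enforced by $L \subset X$, leaving $C := \sum_i \binom{d_i+1}{2}$ equations in $w$. The condition $M \cap X_0 \neq \emptyset$ reduces to $l'(w) \neq 0$ (using $l'(v) = 0$ and $l(x_0) = b \neq 0$).

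Let $N(L)$ denote the number of $w \in V$ satisfying $l'(w) \neq 0$ together with all of the $k \geq 1$ constraints. A standard Fourier expansion expresses $N(L)$ as a main term $(1-q^{-1})\,q^{\dim V - C}$ plus an error bounded by $q^{-s}$ times the main term, provided that for every nonzero choice of Fourier parameters $(u_{i,j,k}, u_0)$ the polynomial
\[
Q_{x_0, v}(w) \;:=\; \sum_{i,j,k} u_{i,j,k}\, a_{i,j,k}(x_0, v, w) \;+\; u_0\, l'(w)
\]
has rank in $w$ at least $r_1(\bar d, s)$, where $r_1$ is supplied by Proposition \ref{bias-rank-1}. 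The crux is to control the ``bad'' set of $(x_0, v)$ on which some $Q_{x_0, v}(\cdot)$ has low $w$-rank. For each fixed nonzero Fourier tuple, $Q(x_0, v, w)$ is a polynomial of total degree $\leq d$ in the three blocks; specializing $x_0 = 0$ and $v = w$ recovers a nonzero linear combination of the homogeneous components of the $P_i$, whose rank grows with $r(\bar P)$. Hence $Q$ itself has high total rank, and an iterated bias-rank restriction argument (Proposition \ref{bias-rank-1} combined with Lemma \ref{subspace-rank}, applied successively to the $x_0$ and $v$ specializations) shows that the fraction of $(x_0, v)$ at which the $w$-rank of $Q$ drops below $r_1$ is at most $q^{-s'}$, summable over the finitely many Fourier parameters.

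Combining the resulting statement ``$N(L) > 0$ for a $(1 - q^{-s})$-fraction of lines'' with the lower bound $\gtrsim q^{2 \dim V - 2c - 2}$ on the total number of lines $L \subset X_b$ (via Lemma \ref{size} applied to the system defining a line together with $l \equiv b$) yields part $(1)$. Part $(2)$ is obtained by parameterizing $M \subset X_b$ as $x_0 + \mathrm{span}\{v_1, v_2\}$, writing down the analogous constraints $b_{i,j_1,j_2,k}(x_0, v_1, v_2, w) = 0$ for $k \geq 1$, and running the identical Fourier/high-rank scheme with one extra parameter in the parameterization. The principal technical obstacle in both parts is the rank-drop analysis of the previous paragraph---quantifying how often a high-rank polynomial in several variable blocks preserves high rank upon partial specialization---which is handled by iterating the bias-rank equivalence at the cost of taking $r(\bar P)$ sufficiently large in $s$.
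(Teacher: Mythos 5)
Your overall skeleton---parameterize the extension direction, impose the vanishing of the Taylor coefficients of $P_i$ along the plane, and count solutions by Fourier expansion with the error controlled by high rank via Proposition \ref{bias-rank-1}---is the same as the paper's. But the step you yourself flag as ``the principal technical obstacle'' is where your argument has a genuine gap, and it is exactly the step the paper is structured to avoid. You fix the line $(x_0,v)$ and need that, for every nonzero Fourier tuple and for $q^{-s'}$-almost every $(x_0,v)$, the specialized polynomial $w\mapsto Q_{x_0,v}(w)$ has high rank. Neither of the tools you cite delivers this: Lemma \ref{subspace-rank} only controls restriction to a subspace of \emph{bounded} codimension, whereas specializing the blocks $x_0$ and $v$ is a restriction to an affine subspace of codimension $2\dim V$; and high rank of $Q(x_0,v,w)$ as a polynomial in all variables does not by itself imply that almost all specializations in the first two blocks have high $w$-rank. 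Moreover, your proposed witness for nonvanishing---setting $x_0=0$, $v=w$---fails for some nonzero tuples: $a_{i,j,k}(0,v,v)=\binom{j+k}{j}P_i^{(j+k)}(v)$, so any tuple with $\sum_{j+k=e}\binom{e}{j}u_{i,j,k}=0$ (e.g.\ $u_{i,0,2}=2$, $u_{i,1,1}=-1$) collapses to zero under this single specialization; one needs a family of specializations $v=a_iw$ for distinct $a_i$ (a Vandermonde argument), which is what the paper's grid $\{a_0,\dots,a_d\}$ provides.

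The paper sidesteps the rank-under-specialization problem entirely by a second-moment argument. It first replaces the identical vanishing of $Q(x+ty+sz)$ in $(t,s)$ by its vanishing at the finite triangular grid of points $(a_i,a_j)$, $(i,j)\in I(d)$ (justified by a Vandermonde claim), so the Fourier phases are $\sum a_{ij}Q(x+a_iy+a_jz)+\cdots$ evaluated at honest points of $V$ rather than polarization coefficients. It then bounds $\mE_{x,y}\bigl|\mE_z e_q(\cdots)\bigr|^2$ by a single Gowers norm $\|e_q(a_{i_0j_0}Q+c_{i_0j_0}l)\|_{U_d}$ via a generalized von Neumann / Cauchy--Schwarz inequality tailored to the linear forms $x+a_iy+a_jz$ (Lemma \ref{complexity}), and then applies Propositions \ref{bias-rank-1} and \ref{bias-rank-2} to the \emph{original} polynomial $Q$. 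Averaging over $(x,y)$ before invoking bias--rank is what makes only the rank of $Q$ itself relevant. If you want to keep your ``fix the line, then count $w$'' architecture, you would need to prove a quantitative statement that high rank survives specialization of variable blocks almost surely---a real theorem, not a corollary of the lemmas you cite---so I recommend restructuring along the paper's second-moment route.
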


\begin{proof} 
We prove the result for hypersurfaces, the proof for complete intersections in analogous.

We will prove $(1)$; the proof of $(2)$ is similar. 

Let $k=\mF _q$. We fix $d$ and define $d'= min(d+1,q)$.
Let $M_0=\{ a_0,\dots ,a_{d}\}\subset  k$ be a subset of $d'$ distinct points.

\begin{claim} Let $Q(x)$ be a polynomial of degree $\leq d$ such that
$Q_{|M_0}\equiv 0$. Then $Q(a)=0$ for all $a\in k$.
\end{claim}

\begin{proof}
If $q\geq d+1$ the Claim follows from the formula for the Vandermonde determinant. On the other hand if $d\geq q$ then there is nothing to prove.
\end{proof}

From now on we assume that $q\geq d+1$. For any $0\leq i\leq d$ we define a subset $S_i$ of $k^2$ by
\[
S_i=\{ (a_i,a_j), 0\leq j\leq i \}
\]
 Let $T=\bigcup _{0\leq i\leq d}S_i$.

\begin{claim} Let $Q(x,y)$ be a polynomial of degree $\leq d$ such that
$Q_{|T}\equiv 0$. Then $Q=0$.
\end{claim}

\begin{proof} We prove the claim by induction in $d$. Let
$Q=\sum _{a,b}q_{a,b} x^ay^b, a+b\leq d$. The restriction of $Q$ on the line $\{ x=0\}$ is equal to
$Q^0(y)=\sum _{b\leq d}q_{0,b}y^b $. Since $Q^0_{|S_d}\equiv 0$ we see that $Q^0=0$. So $Q(x,y)=xQ'(x,y)$. By the inductive assumption we have $Q'=0$
\end{proof}

We will assume from now on that $a_0 =0$. Denote $I(d)=\{(i,j):  1\leq i\leq d,  \ 1\leq j\leq i\}$, and $m=|I(d)|$. 

An affine line in $X_b$ is parametrized as $x+ty$, $t \in k$, with
\[
(*) \quad Q(x+ty)=0, l(x)=b, l(y)=0.
\]
Let $Y$ be the set of $(x,y)$ satisfying $(*)$.

We need to show that almost every $(x, y) \in Y$ we can find $z$ with
\[
(**) \quad Q(x+ty+sz)=0, \  l(z)=-b,  \quad s,t \in k
\]
or  alternatively
\[
Q(x+ty+sz)=0, \  l(x+ty+sz)=(1-s)b,  \quad s,t \in k
\]
We can reduce this system to
\[
Q(x+a_iy+a_jz)=0, \    l(x+a_iy+a_jz)=(1-a_j)b, \quad  (i,j) \in I(d)
\]

Fix $(x,y) \in Y$ and estimate the number of solutions:
\[
(*) \quad q^{-2m} \sum_{z} \sum_{a,c \in k^{m}  }e_q(\sum a_{ij} Q(x+a_iy+a_jz)+c_{ij} (l(x+a_iy+a_jz)+(a_j-1)b))
\]
Suppose  $a=(a_{ij}) = 0$, but $c=(c_{ij})\ne 0$, and recall that $l(x)=b, l(y)=0$. We have
\[
\sum_ze_q(\sum_{ij} c_{ij} (l(x+a_iy+a_jz)+(a_j-1)b)) = \sum_z e_q(\sum_{ij} c_{ij} (a_jl(z)+a_jb)
\]
Now if $\sum_{ij} c_{ij} a_jl(z) \not \equiv 0$ then the sum is $0$. Otherwise also $\sum_{i,j} c_{ij} a_jb=0$ so that the sum is $|V|$.
\ \\
Now suppose  $a \ne 0$.  Say $a_{i_0j_0} \ne 0$. We estimate
\begin{equation}\label{one}
\mE_{x,y \in V}|\mE_ze_q(\sum a_{ij} Q(x+a_iy+a_jz)+c_{ij} (l(x+a_iy+a_jz)+(a_j-1)b))|^2
\end{equation}
.
\begin{lemma}\label{complexity}
\[
\mE_{x,y,z,z'}\prod _{(i,j) \in I(d)} f_{i,j}(x+a_iy+a_jz) \bar f_{i,j}(x+a_iy+a_jz+a_jz') \le \|f_{i_0. j_0}\|_{U_{d}}
\]
\end{lemma}

\begin{proof}
Without loss of generality $a_1=1$ (make a change of variable $y \to a_1^{-1}y,z \to a_1^{-1}z )$.
We prove this by induction on $d$. When $d=1$ we have $x+y+z, x+y+z'$, and the claim is obvious.  Assume $d>1$. We can write the average as
\[\begin{aligned}
&\mE_{x,y,z,z'} \prod_{(i,j) \in I(d-1)} f_{i,j}(x+a_iy+a_jz) \bar f_{i,j}(x+a_iy+a_jz+a_jz')\\
& \qquad  \qquad \prod_{1\le j \le d} f_{d,j}(x+a_dy+a_jz) \bar f_{d,j}(x+a_dy+a_jz+a_jz'). \\
\end{aligned}\]
Shifting $x$ by $a_dy$ we get
\[\begin{aligned}
&\mE_{x,y,z,z'} \prod_{(i,j) \in I(d-1)} f_{i,j}(x+(a_i-a_d)y+a_jz) \bar f_{i,j}(x+(a_i-a_d)y+a_jz+a_jz')\\
& \qquad  \qquad \prod_{1\le j \le d} f_{d,j}(x+a_jz) \bar f_{d,j}(x+a_jz+a_jz')  \\
\end{aligned}\]
Applying the Cauchi-Schwartz inequality we get
\[\begin{aligned}
&[\mE_{x,y,y',z,z'} \prod_{(i,j) \in I(d-1)} f_{i,j}(x+(a_i-a_d)y+a_jz) \bar f_{i,j}(x+(a_i-a_d)y+a_jz+a_jz')\\
& \qquad \qquad \prod_{(i,j) \in I(d-1)} f_{i,j}(x+(a_i-a_d)y+(a_i-a_d)y'+a_jz)  \\
& \qquad \qquad \qquad \qquad \bar f_{i,j}(x+(a_i-a_d)y+(a_i-a_d)y'+a_jz+a_jz')]^{1/2}\\
\end{aligned}\]
Shifting $x$ by $a_dy$ and rearranging we get
\[\begin{aligned}
&[\mE_{x,y,y',z,z'} \prod_{(i,j) \in I(d-1)} f_{i,j}(x+a_iy+a_jz) \bar  f_{i,j}(x+a_iy+(a_i-a_d)y'+a_jz)\\
&\prod_{(i,j) \in I(d-1)}\bar f_{i,j}(x+a_iy+a_jz+a_jz')  f_{i,j}(x+a_iy+(a_i-a_d)y'+a_jz+a_jz')]^{1/2}\\
\end{aligned}\]

Now if we denote
\[
g_{i,j, y'}(x) =f_{i,j}(x) \bar f_{i,j}(x+y'+(a_i-a_d)y'').
\]
then by the induction hypothesis we get that the above is bounded by
\[
[\mE_{y'}\|f_{i,j}(x) \bar f_{i,j}(x+y'+(a_i-a_d)y'')\|_{U_{d-1}}]^{1/2} \le \|f_{i,j}(x)\|_{U_d}.
\]
for any $(i.j) \in I(d-1)$. \\

We do a similar computation for $(i.j) \in I(d)\setminus \{I(d-1), (d,1)\}$ , splitting
\[\begin{aligned}
&\mE_{x,y,z,z'}\prod_{(i,j) \in I(d-1)} f_{i+1,j+1}(x+a_{i+1}y+a_{j+1}z) \bar f_{i+1,j+1}(x+a_{i+1}y+a_{j+1}z+a_{j+1}z')\\
& \qquad  \qquad \prod_{1\le j \le d} f_{j,1}(x+a_jz) \bar f_{j,1}(x+a_jz+a_jz')  \\
\end{aligned}\]

The only term left uncovered is $f_{d,1}$,  so we split
\[\begin{aligned}
&\mE_{x,y,z,z'} \prod_{(i,j) \in I(d-1)} f_{i+1,j}(x+a_{i+1}y+a_{j}z) \bar f_{i+1,j}(x+a_{i+1}y+a_{j}z+a_{j}z')\\
& \qquad  \qquad \prod_{1\le i \le d} f_{i,i}(x+a_{i}y+a_iz) \bar f_{i, i}(x+a_{i}y+a_iz+a_iz'). \\
\end{aligned}\]
 We make the change of variable $z \to  z-y$ to get
 \[\begin{aligned}
&\mE_{x,y,z,z'} \prod_{(i,j) \in I(d-1)} f_{i+1,j}(x+a_{i+1}y+a_{j}(z-y)) \bar f_{i+1,j}(x+a_{i+1}y+a_{j}(z-y)+a_{j}z')\\
& \qquad  \qquad \prod_{1\le i \le d} f_{i,i}(x+a_iz) \bar f_{i, i}(x+a_iz+a_iz').  \\
\end{aligned}\]
 \end{proof}

By the Lemma \ref{complexity} we obtain that \eqref{one} is bounded by $\|e_q(\sum a_{i_0j_0} Q+c_{i_0j_0} l)\|_{U_d}$.
By Propositions \ref{bias-rank-1},  \ref{bias-rank-2} for there exists $r=r(s)$ such that if  $Q$ is of rank $>r$ then  $\|e_q(\sum a_{i_0j_0} Q+c_{i_0j_0} l)\|_{U_d}<q^{-s}$.  It follows that we can choose $r$ so that for $q^{-s}$ almost all $x,y \in Y$  the contribution to $(*)$ from all  $(a, c)$ with $a \ne 0$ is bounded by $|V|q^{-4m}$.
\end{proof}


\section{The case of algebraically close  fields}\label{algebraic}
We prove Theorem \ref{Jan} for algebraically closed fields. 

We fix $m,d$. We provide a proof for the case when $c=1$; the general case is completely analogous.
 As proven in Section  \ref{jan-finite}, there exists $r=r(m,d)$ such that for any finite field $k$ of characteristic $>d$, a $k$-vector space $\mV$ and a polynomial $P\in \mcP _d(\mV)$ of rank $\geq r$ the map $\kk _P(k)$ is surjective.

We first show the surjectivity of  $\kk _P(k)$ for polynomials $P\in \mcP _d(\mV)(k)$ of rank $\geq r$.

We fix  $\mV =\mA ^n$ and consider $\mcP _d(\mV)$ as a scheme defined over $\mZ$. Let $T$ be the set of sequences $(a_i,b_i),1\leq i\leq r$ such that $0\leq a_i,b_i <d$ and $a_i+b_i \leq d$. For any $t=\{(a_i,b_i)\} \in T$ we denote by $\nu _t:\oplus _{i=1}^r \mcP _{a_i}(\mV) \otimes \mcP _{b_i}(\mV) $ the linear map given by 
$$\nu _i(\{ Q_i\otimes R_i\})=\sum _{i=1}^r Q_i R_i$$
Let $\mY \subset \mcP _d(\mV)$ be constructible subset which is the complement of unions of $\nu _t,t\in T$ and  $\mY (k)\subset \mcP _d(\mV)(k)$ consists polynomials $P$ of rank $>r$
for any algebraically closed field $k$.

Let $\mR \subset \mY$ be the subscheme of polynomials $P$ such that the map $\kk _P$ is not surjective.

\begin{claim}\label{p} $\mR (\bar \mF _p)=\emp$ for any $p>d$.
\end{claim}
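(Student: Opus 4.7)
The plan is to reduce to the finite field result already proven: since $\bar{\mF}_p = \bigcup_l \mF_{p^l}$, any counterexample over $\bar{\mF}_p$ would have to be defined and witness non-surjectivity over some finite subfield, contradicting Proposition \ref{Jan1}.

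First I would assume for contradiction that there exists $P \in \mR(\bar{\mF}_p)$. Since $P$ has only finitely many coefficients, it is defined over some finite subfield $\mF_{p^l} \subset \bar{\mF}_p$. The condition $P \in \mY(\bar{\mF}_p)$ means $P$ has no decomposition $P = \sum_{i=1}^{r'} Q_iR_i$ of length $r' \leq r$ with $Q_i, R_i \in \mcP_{<d}(\mV)(\bar{\mF}_p)$. In particular, no such decomposition exists with $Q_i, R_i \in \mcP_{<d}(\mV)(\mF_{p^l})$, since any such decomposition over $\mF_{p^l}$ would remain valid after base change. Thus the rank of $P$ viewed as a polynomial over $\mF_{p^l}$ is also $> r$, and the same holds over every finite extension $\mF_{p^{l'}} \supset \mF_{p^l}$.

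Next, I would invoke Proposition \ref{Jan1} (whose quantitative form is uniform in the finite field once $\mathrm{char}>d$): for every $l' \geq l$, the map $\kk_P(\mF_{p^{l'}})\colon \text{Hom}_{af}(W,V)(\mF_{p^{l'}}) \to \mcP_d(\mW)(\mF_{p^{l'}})$ is surjective. Given any $Q \in \mcP_d(\mW)(\bar{\mF}_p)$, choose $l''$ so that $Q$ is defined over $\mF_{p^{l''}}$, and set $l^* = \max(l,l'')$. Then the surjectivity of $\kk_P(\mF_{p^{l^*}})$ provides some $\phi \in \text{Hom}_{af}(W,V)(\mF_{p^{l^*}}) \subset \text{Hom}_{af}(W,V)(\bar{\mF}_p)$ with $\phi^*(P) = Q$. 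Therefore $\kk_P(\bar{\mF}_p)$ is surjective.

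To conclude, I would use the standard fact that for a morphism between finite-type schemes over an algebraically closed field, surjectivity on closed ($\bar{\mF}_p$-rational) points is equivalent to surjectivity as a morphism of schemes (Chevalley constructibility plus Nullstellensatz). Hence $\kk_P$ is surjective, contradicting $P \in \mR(\bar{\mF}_p)$, and so $\mR(\bar{\mF}_p) = \emp$. The only nontrivial ingredients are the inequality $r_{\bar{k}}(P) \leq r_k(P)$ (so that the rank hypothesis descends to a finite subfield) and the uniformity of the rank threshold $r=r(m,d)$ in the finite field, which is exactly what Proposition \ref{Jan1} gives in characteristic $>d$; everything else is bookkeeping about fields of definition, so I do not anticipate any serious obstacle.
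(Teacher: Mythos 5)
Your proposal is correct and follows essentially the same route as the paper: both arguments descend a hypothetical counterexample $(P,Q)$ to a common finite subfield of $\bar\mF_p$, use the inequality $r_{l}(P)\geq r_{\bar\mF_p}(P)>r$ to see that the rank hypothesis survives the descent, and then contradict the finite-field surjectivity of Proposition \ref{Jan1} (whose threshold is uniform in $q$ when $\mathrm{char}>d$). The only cosmetic difference is the order of quantifiers (you fix $P$'s field of definition first and then handle an arbitrary $Q$, while the paper picks the missing $Q$ first), plus your explicit remark identifying surjectivity on $\bar\mF_p$-points with scheme-theoretic surjectivity, which the paper leaves implicit.
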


\begin{proof} Assume thar 
 $\mR (\bar \mF _p)\neq \emp$.  Then $\mR (k)\neq \emp$ for some finite extension $k$ of $\mF _p$. So there exists a polynomial 
$P\in \mcP _d(k^n)$ of $\bar k $-rank $>r$ such that the map $\kk _P(\bar k)$ is not surjective. So there exist a finite extension $l$ of $k$ and  $Q \in \mcP _d(k^n) $ which is not in the image of $\kk _P(\bar k)$. Then of course it is not in the image of $\kk _P(\bar k)$. On the other hand  $r_l(P)\geq r_{\bar k}(P)$. So by Theorem \ref{Jan} we see that $Q \in Im (\kk _P(l))$.
\end{proof} 

\begin{corollary}
\begin{enumerate}
\item The map $\kk _P(k)$ is surjective for 
any algebraically closed field $k$ of characteristic $>d$ and a polynomial 
$P\in \mcP _d(\mV)$ of ranks $>r$.
\item The map $\kk _P(k)$ is surjective for 
any algebraically closed field $k$ of characteristic $0$ and a 
polynomial 
$P\in \mcP _d(\mV)$ of ranks $>r$.
\end{enumerate}
\end{corollary}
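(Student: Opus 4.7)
The plan is to bootstrap from Claim \ref{p}, which already gives $\mR (\bar \mF _p)=\emp$ for every prime $p>d$, to the conclusion that $\mR (k)=\emp$ for any algebraically closed field $k$ of characteristic $>d$ or characteristic $0$. The key structural point is to view $\mR$ as a genuine subscheme of $\mY$ of finite type over $\mZ$. This is where Chevalley's theorem comes in: the morphism $\mY \times Hom_{af}(\mW ,\mV )\to \mY \times \mcP _d(\mW )$ over $\mY$ has constructible image, so its complement is constructible, and $\mR$ is the image of this complement under the projection to $\mY$ (hence again constructible, now as a subscheme of $\mY$).

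With this in place, part $(1)$ is immediate. For algebraically closed $k$ of characteristic $p>d$, the base change $\mR _{\mF _p}:=\mR \otimes _\mZ \mF _p$ is of finite type over $\mF _p$. Claim \ref{p} says it has no $\bar \mF _p$-points, and by Hilbert's Nullstellensatz a finite type scheme over a field with no points over the algebraic closure is empty. Therefore $\mR _{\mF _p}=\emp$, so $\mR (k)=\emp$ as well, which unpacks to the required surjectivity of $\kk _P(k)$ for every $P\in \mY (k)$.

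Part $(2)$ will follow from the standard spreading-out argument. Suppose, towards contradiction, that $k$ is algebraically closed of characteristic $0$ and $P\in \mR (k)$. Then $P$ is encoded by finitely many coordinates in $k$, so it is already defined over some finitely generated $\mZ$-subalgebra $A\subset k$, giving $P\in \mR (A)$. Since $A$ is torsion free, the morphism $\mathrm{Spec}(A)\to \mathrm{Spec}(\mZ)$ is flat and dominant, and its image is a dense open subset of $\mathrm{Spec}(\mZ)$; moreover $A$ is Jacobson, so closed points of $\mathrm{Spec}(A)$ are dense and map to closed points of $\mathrm{Spec}(\mZ)$ with finite residue fields. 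Choose a maximal ideal $\fm \subset A$ whose residue characteristic $p$ exceeds $d$. Reduction of $P$ modulo $\fm$ yields a point in $\mR (A/\fm )\subset \mR (\bar \mF _p)$, contradicting Claim \ref{p}.

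The main obstacle is the very first step: carefully setting up $\mR$ as a constructible subscheme over $\mZ$ so that base change and the spreading-out procedure are both meaningful and respect the set-theoretic definition of $\mR$. Once this bookkeeping is arranged, the Nullstellensatz handles positive characteristic and a brief spreading-out argument handles characteristic $0$; the entire statement can alternatively be viewed as an instance of the Lefschetz principle, since ``$P$ has rank $>r$ and $\kk _P$ is surjective'' is a first-order sentence in the coefficients of $P$.
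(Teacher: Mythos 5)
Your argument is correct, but it travels a genuinely different road from the paper's. The paper transfers Claim \ref{p} to arbitrary algebraically closed fields by pure model theory: part (1) is quoted as an instance of the completeness of $ACF_p$, and part (2) is obtained by forming an ultraproduct of the fields $\bar \mF_p$ over a non-principal ultrafilter, invoking {\L}o\'s's theorem, and then using completeness of $ACF_0$. You instead work scheme-theoretically: you realize $\mR$ as a constructible subset of $\mY$ over $\mZ$ via two applications of Chevalley's theorem, kill it in characteristic $p>d$ by the Nullstellensatz, and kill it in characteristic $0$ by spreading out to a finitely generated $\mZ$-algebra and reducing modulo a suitable maximal ideal. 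The two routes are equivalent in substance (your closing remark about the Lefschetz principle is essentially the paper's proof in disguise), but yours is arguably more self-contained for a geometric readership, and it is actually more careful than the paper on one point: the paper calls $\mR$ a ``subscheme'' without justifying that non-surjectivity of $\kk_P$ cuts out anything algebraic, whereas your Chevalley construction supplies exactly that justification. One detail in part (2) needs a word: since $\mR$ is only constructible, it need not be stable under specialization, so the bare assertion that reduction of $P$ modulo an arbitrary $\fm$ lands in $\mR(A/\fm)$ is not automatic. The standard repair is to note that $\mR$ contains a dense open subset $U$ of the closure of the point of $\mY$ determined by $P$; the preimage of $U$ in $\mathrm{Spec}(A)$ is a dense open containing the generic point, and one chooses $\fm$ inside it with residue characteristic $>d$. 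You flag the bookkeeping issue yourself, and this fix is routine, so I would not call it a gap in the mathematics, only a sentence to add.
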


\begin{proof} The part $(1)$ follows from the completeness of the theory $ACF_p$ of algebraically closed fields of a fixed characteristic $p$.  

To prove the part $(2)$ one choses a non-trival ultrafilter $\mcU$ on the set of prime and 
considers the $\mcU$-ultraproduct of theories $ACF_p$. Let $l$ be the $\mcU$-ultraproduct of fields $\bar \mF _p$. As follows Claim \ref{p}  and Theorem of Los the  map $\kk _P(l)$ is surjective for any polynomial $P\in \mcP _d(\mV)$ of ranks $>r$. Since the theory $ACF_p$ of algebraically closed fields of  characteristic $0$ is complete the Corollary is proved.  
\end{proof}

Let $\mT \subset \mY$ be be the subscheme of polynomials $P$ such that there exists $Q\in \mcP _d(\mA ^m)$ such that 
 $dim (\kk _P^{-1}(Q))\neq dim (Hom_{af}(W,V))-dim(\mcP _d(\mW))$. Theorem \ref{Jan}  says that $\mT =\emp$. The same arguments as before show that it is sufficient to prove that  
$$dim (\kk _P^{-1}(Q))= dim (Hom_{af}(W,V))-dim(\mcP _d(\mW))$$ for all finite fields $k=\mF _q$ of charateristic $>d$ and $Q\in \mcP _d(\mA ^m)(k)$. But it  follows from the results of Weil that 
$$dim (\kk _P^{-1}(Q))=lim _{l\to \infty}
\frac {log _q (| \kk _P^{-1}(Q)(k_l) |)}{l}$$ 
where $k_l/k$ is the extension of degree $l$. Now the equality $$dim (\kk _P^{-1}(Q))= \dim (Hom_{af}(W,V))-dim(\mcP _d(\mW))$$ follows from Proposition \ref{Jan1} (2).


\end{document}